\allowdisplaybreaks \numberwithin{equation}{section}
\def\@currentlabel{2.1}\label{e:dispaa}
\def\@currentlabel{2.21}\label{e:dispau}
\def\@currentlabel{2.22}\label{e:dispav}
\def\@currentlabel{2.23}\label{e:dispaw}
\def\@currentlabel{2.24}\label{e:dispax}
\def\theequation{\thesection.\@arabic\c@equation}
\numberwithin{equation}{section}
\newtheorem{theorem}{Theorem}[section]
\newtheorem{corollary}[theorem]{Corollary}
\newtheorem{lemma}[theorem]{Lemma}
\theoremstyle{definition}
\theoremstyle{remark}
\newtheorem{remark}[theorem]{Remark}
\newtheorem{remarks}[theorem]{Remarks}
\newcommand{\ep}{\varepsilon}
\newcommand{\la}{\lambda}
\newcommand{\R}{\mathbb{R}}
\begin{document}
    
    \title
    {Traveling vortex pairs for 2D  Boussinesq equations}

    \author{Daomin Cao, Shanfa Lai, Guolin qin}
    
    \address{Institute of Applied Mathematics, Chinese Academy of Sciences, Beijing 100190, and University of Chinese Academy of Sciences, Beijing 100049,  P.R. China}
    \email{dmcao@amt.ac.cn}
    
    \address{Institute of Applied Mathematics, Chinese Academy of Sciences, Beijing 100190, and University of Chinese Academy of Sciences, Beijing 100049,  P.R. China}
    \email{laishanfa@amss.ac.cn}
    
    \address{Institute of Applied Mathematics, Chinese Academy of Sciences, Beijing 100190, and University of Chinese Academy of Sciences, Beijing 100049,  P.R. China}
    \email{qinguolin18@mails.ucas.ac.cn}

    \begin{abstract}
       In this paper, we study the existence and asymptotic properties of the traveling vortex pairs for the two-dimensional inviscid incompressible Boussinesq equations. We construct a family of traveling vorticity pairs, which constitutes the de-singularization of a pair of point vortices with equal intensity but opposite sign. Using the improved vorticity method, we also give limiting position of the supports of vorticities.
        ~\\
        
        \noindent \emph{Keywords}.\ Boussinesq equations; Traveling vortex pair solutions; Variational method; Asymptotic behavior
    \end{abstract}
    \maketitle
    \section{Introduction and main results}\label{sect 1}
   In this paper we are concerned with the two-dimensional Boussinesq equations
       \begin{numcases}
        {}
        \label{0-1} \partial_t \mathbf{v} + (\mathbf{v} \cdot\nabla) \mathbf{v} + \nabla P = \nu \Delta \mathbf{v} + \rho \mathbf{e}_2, &\\
        \label{0-2}  \partial_t \rho + (\mathbf{v} \cdot\nabla) \rho= \vartheta \Delta \rho,  &\\
        \label{0-3}\nabla\cdot \mathbf{v} = 0.&
    \end{numcases}
where $\mathbf{v}=(v_1,v_2)$ is the velocity field, $P$ is the scalar pressure, and $\rho$ is the density of the fluid.  In the above equation, $ \nu\geq 0$ is the viscosity coefficient, $ \vartheta\geq 0 $ is the thermal conductivity coefficient, and $\rho \mathbf{e}_2$ is the buoyancy force.
    
    The Boussinesq equations has very important mathematical implications, especially in two dimensions. In fact, the two-dimensional Boussinesq system retains some key properties of the three-dimensional Euler equations and the three-dimensional Navier-Stokes equations, such as the vortex stretching mechanism. There are many studies of the global/local well-posedness for this system, e.g., \cite{BS,Ch,CN,EW, LB,TWZZ} and the references therein. The regularity and finite time blow up of solutions are well-known open problems for two-dimensional inviscid Boussinesq system $(\nu\equiv \vartheta \equiv 0 )$. Recently, T. Hmidi and K. Choi et al. have done some work on such issues for reference \cite{CKY,KS,KT,KY,Hmidi2}.

    In this paper, we consider hydrostatic equilibrium of the 2D inviscid Boussinesq system $(\nu\equiv \vartheta \equiv 0 )$  in the whole plane. Although there are many results for the above unsteady cases, there seems to be little work for the relative equilibrium cases.  Our main aim is to construct a family of traveling vortex pairs for two-dimensional Boussinesq equations with general vorticity and density functions, which constitute the desingularization of a pair of point vortices with equal strength but opposite sign.
    
    Now we deduce the vorticity-density formulation of the system $(\nu\equiv \vartheta \equiv 0 )$.  Let $\omega=\nabla \times \mathbf{v}=\partial_{1} v_{2}-\partial_{2} v_{1}$ be the corresponding vorticity of the flow. By virtue of Eq. \eqref{0-3},  there exists a Stokes stream function $\Psi(x, t)$ such that
    $$
    \mathbf{v}=\nabla^{\perp} \Psi:=\left(\partial_{x_{2}} \Psi,-\partial_{x_{1}} \Psi\right).
    $$
    Note that by the definitions of $\Psi$ and $\omega$ we have $-\Delta \Psi=\omega$. Thus the velocity field $\mathbf{v}$ can be recovered from the vorticity function $\omega$ through the Biot-Savart law
    \begin{equation*}
        \mathbf{v}=\nabla^{\perp}(-\Delta)^{-1} \omega=-\frac{1}{2 \pi} \frac{x^{\perp}}{|x|^{2}} * \omega.
    \end{equation*}
    Taking the curl of \eqref{0-1}, we get the following vorticity-density formulation:
     \begin{numcases}
        {}
        \label{0-4} \partial_t {\omega} + (\mathbf{v} \cdot\nabla) \omega = \partial_{{1}}\rho, &\\
        \label{0-5} \partial_t \rho + (\mathbf{v} \cdot\nabla) \rho=0, &\\
        \label{0-6}\mathbf{v}=\nabla^{\perp} \Psi, \Psi=(-\Delta)^{-1} \omega.&
    \end{numcases}
       
      Vorticity localization in nature consists of a finite number of small regions, outside which vorticity either is zero or rapidly decreases to zero. Mathematically we can consider the singular solution of the Boussinesq system
       \begin{equation}\label{0-7}
           \omega=\sum_{i=1}^{N} \kappa_{i} \delta\left(x-\Im_{i}(t)\right).
       \end{equation}
   The corresponding velocity can be described as 
       \begin{equation*}
           \mathbf{v}(x)=-\sum_{i=1}^{N} \frac{\kappa_{i}}{2 \pi} \frac{\left(x-\Im_{i}(t)\right)^{\perp}}{\left|x-\Im_{i}(t)\right|^{2}},
       \end{equation*}
    and the position of the vortices $\Im_{i}: \mathbb{R} \rightarrow \mathbb{R}^{2}$ satisfies
    \begin{equation}\label{0-8}
          \frac{d}{d t} \Im_{i}(t)=-\sum_{j=1, j \neq i}^{N} \frac{\kappa_{j}}{2 \pi} \frac{\left(\Im_{i}(t)-\Im_{j}(t)\right)^{\perp}}{\left|\Im_{i}(t)-\Im_{j}(t)\right|^{2}} .
    \end{equation}
      Here $\delta(x)$ denotes the standard Dirac mass at the origin, and the constants $\kappa_{i}$ are called the intensities of the vortices $\Im_{i}(t)$. The Hamiltonian system \eqref{0-8} is now generally referred to as the point vortex model or the Kirchhoff-Routh model (see \cite{Mar}).
       
     In this paper, we study solutions with sharply concentrated vorticity around the points $\Im_{1}(t), \ldots, \Im_{N}(t)$ which are idealized as regular solutions that approximate the point vortex model. Finding these regular solutions is a classical vortex desingularization problem, see \cite{Da1,Cao1,Cao2,Cao3} and the references therein.
     Under certain appropriate conditions, the system \eqref{0-8} allows a simple traveling solution of the form 
       \begin{equation}\label{0-9}
           \Im_{i}(t)=b_{i}+W t \mathbf{e}_{1}, \quad i=1, \ldots, N,
       \end{equation}
       where $b_{1}, \ldots, b_{N}$ are points in $\mathbb{R}^{2}$, the constant $W>0$ is the speed, and $\mathbf{e}_{1}=(1,0)$. In this paper, we focus on traveling solutions of the form \eqref{0-9}. We consider the traveling vortex pair, namely the solution with $N=2$, and   
    \begin{equation}\label{0-10}
        b_{1}=-b_{2}=r^* \mathbf{e}_{2}, \quad \mathbf{e}_{2}=(0,1), \quad \kappa_{1}=-\kappa_{2}=\kappa, \quad W>0
    \end{equation}
    where  $W, \kappa>0$  are  two given constants and  $r^*>0$ will be suitably chosen later. It is easy to check that such a vortex pair is indeed a solution of system \eqref{0-8}.
    
    To study vortex desingularization problem of the traveling vortex pair, we can look for traveling solutions to \eqref{0-4}-\eqref{0-6} by requiring that
    \begin{equation}\label{0-11}
        \omega(x,t)=\zeta(x_1-Wt,x_2),\  \rho(x,t)=\eta(x_1-Wt,x_2),
        \end{equation}
    for some profile functions $(\zeta, \eta)$ defined on $\mathbb{R}^{2}$. 
   Let  $\Psi\left(x_{1}, x_{2}, t\right)=\phi\left(x_{1}-W t, x_{2}\right)$ with $\phi=(-\Delta)^{-1} \zeta$. Thus the system \eqref{0-4}-\eqref{0-6} is reduced to a stationary problem  
\begin{numcases}
    {}\label{0-12} \nabla^{\perp}(\phi-Wx_2)\cdot\nabla \zeta=\partial_{1}\eta, &\\
    \label{0-13} \nabla^{\perp}(\phi-Wx_2)\cdot\nabla \eta=0,&\\
    \label{0-14} \phi=(-\Delta)^{-1} \zeta.&
\end{numcases}
We consider axisymmetric flows about  $x_{1}$, and limit our attention henceforth to the half-plane $\Pi=\left\{x \in \mathbb{R}^{2} \mid x_{2}>0\right\}$. Specifically, we may assume
\begin{equation*}
    \zeta\left(x_{1}, x_{2}\right)=-\zeta\left(x_{1},-x_{2}\right),
\end{equation*}
so that $\phi\left(x_{1}, x_{2}\right)=-\phi\left(x_{1}, -x_{2}\right)$.
Suppose that there exists some function $b(s)$ satisfying
\begin{equation}\label{0-15}
     \eta=b(\phi-Wx_2).
\end{equation}
Taking \eqref{0-15} into \eqref{0-12}, we get
 \begin{equation*}
     \nabla^{\perp}(\phi-Wx_2)\cdot \nabla (\zeta+x_2b'(\phi-Wx_2))=0.
 \end{equation*}
 Thus, if there exists some function $a(s)$ such that
 \begin{equation*}
     \zeta=a(\phi-Wx_2)-x_2b'(\phi-Wx_2)
 \end{equation*}
then \eqref{0-12} is satisfied automatically. Thus, we are led to the following nonlinear elliptic equation:
\begin{equation}\label{0-16}
        -\Delta\phi=\lambda(f(\phi-Wx_2-\mu)+x_2g(\phi-Wx_2-\mu)) \  \text{in} \ \Pi,
\end{equation}
 For technical reasons, we make the following assumptions on $f$ and $g$.

\begin{enumerate}
    \item[(H1)]  $f \in C(\mathbb{R}\setminus\{0\})$ and $g \in C(\mathbb{R})$ are both nonnegative and nondecreasing.
\end{enumerate}

Set
$$
i(x_2, t)=f(t)+x_2g(t)
$$
and $I(x_2, t)=\int_{0}^{t} i(x_2, s) d s$. Let $J(x_2, \cdot)$, the modified conjugate function to $I(x_2, \cdot)$, be defined by
\begin{equation}\label{0-17}
    J(x_2, s)=\sup _{t}[s t-I(x_2, t)]\  \text { if } s \geq 0 ;\  J(x_2, s)=0 \text { if } s<0 .
\end{equation}

We further require the following:
\begin{enumerate}
    \item [(H2)] For each $x_2>0, i(x_2, t)=0$ if $t \leq 0$, and $i(x_2, t)$ is strictly increasing (w.r.t. t) in $[0,+\infty)$.
    \item [(H3)] For each $d>0$, there exist $\delta_{0} \in(0,1)$ and $\delta_{1}>0$ such that
    $$
    I(x_2, t) \leq \delta_{0} i(x_2, t) t+\delta_{1} i(x_2, t), \quad \forall\ t>0, \forall\ 0<x_2 \leq d
    $$
    
     \item[(H4)]For all $\tau>0$, there holds
     $$
     \lim _{t \rightarrow+\infty} i(x_2, t) e^{-\tau t}=0, \quad \forall\ x_2>0 .
     $$ 
\end{enumerate}
\begin{remarks}
    We give some remarks on the above assumptions.
    \begin{enumerate}
        \item Assumption (H2) excludes the cases where $g\equiv0$ and $f$ is a step function. This case has been studied in \cite{Cao1, Cao4,FT}.
        \item Assumption (H3) is of Ambrosetti-Rabinowitz type, cf. condition $(p5)$ in \cite{AR}. This
        assumption implies that $ i(r, t) \rightarrow+\infty \text { as } t \rightarrow+\infty$ (see\cite{Ni}). Note that $i(x_2, \cdot)$ and $\partial_{s} J(x_2, \cdot)$ are inverse graphs (see \cite{Roc}),  one can check
        that (H3) is actually equivalent to   
        \item [$(H3)'$] : $$ J(x_2,s)\ge (1-\delta_{0})\partial_{s}J(x_2,s)s-\delta_{1}s, \forall\ t>0, \forall\ 0<x_2\le d.$$ 
         \item The assumption (H4) relaxes the condition (1.6) in \cite{Ber}. In such problems, proper assumptions on the  growth at infinite are usually necessary.
    \end{enumerate}
\end{remarks}
 
 It is not difficult to find some $(f,g)$ that satisfies hypothesis (H1)-(H4). For example, Lamb found an explicit solution of \eqref{0-16} in \cite{Lamb}, considering the case $f(s)=s_{+}^{p}$, $g(s)\equiv 0$ and $p=1$. The explicit solution obtained in \cite{Lamb} is often called the Lamb dipole or Chaplygin-Lamb dipole, see  \cite{Mel}. For the type above,  de Valeriola and Van Schaftingen in \cite{DV} investigate the case where $p>1$. Turkington also gives a desingularization solution in \cite{Tur89} for $ f(s)=\alpha \chi_{\{s>0\}}, g(s)=s_{+}$, which can be taken as $p=0$.

Exact solutions of \eqref{0-16} are known only in special cases. Besides those exact solutions, the existence of steady vortex pairs has been rigorously proved. Turkington  studied in \cite{Tur83} when $f$ is the Heaviside function and constructed a family of desingularization solutions for \eqref{0-16}. Burton \cite{Bu0} and Badiani \cite{Bad} studied the existence of steady vortex pairs with a prescribed distribution of the vorticity. Inspired by Burton’s work, Elcrat and Miller \cite{Elc,Elc1} extended Turkington’s result to the context of rearrangements of vorticity and to more general exterior plane domains.
Recently, Hmidi and Mateu \cite{Hmidi} established the existence of corotating and counter-rotating vortex pairs of simply connected patches for the active scalar equations by using the contour dynamics equations. For numerical studies, see, e.g., \cite{Del,Kiz}.

As mentioned above, for some special nonlinearities $f$, there are already some desingularization results.
 To the best of our knowledge, no related paper has established the solution of the two-dimensional Boussinesq equations to the desingularization of traveling  vortex pairs of the general vorticity and density functions. Our goal is to generalize these special cases to more general nonlinear terms and thus enrich the known solutions to pairs of steady vortices.

Now, we turn to state the main results. To do this, let's introduce some known facts. Let
 \begin{equation*}
    J_{G}(s)=\sup _{t}\left[s t-G(t)\right]\  \text { if } s \geq 0 ;\  J_{G}(s)=0 \text { if } s<0,
\end{equation*}
where $G(t)=\int_{0}^{t}g(s)ds$. We introduce the limiting profile $V^{\kappa}: \mathbb{R}^{2} \rightarrow \mathbb{R}$ defined as the unique radially symmetric solution of the problem
\begin{equation}
    \left\{\begin{array}{l}
        -\Delta V^{\kappa}=g(V^{\kappa}), \quad x \in \mathbb{R}^{2}, \\
        \int_{\mathbb{R}^{2}}g(V^{\kappa}) d x=\kappa.
    \end{array}\right.
\end{equation}
According to the Pohozaev identity, $V^{\kappa}$ has the following properties
\begin{equation}
    \int_{B_{r}(0)}G(V^{\kappa}(z))dz=\frac{\pi}{2}((V^{\kappa})'|_{\partial B_{r}(0)})^2 r^2, \ \ \ \int_{B_{r}(0)}g(V^{\kappa})dz=2\pi(-(V^{\kappa})'|_{\partial B_{r}(0)})r,
\end{equation}
where $supp(V_{+}^{\kappa})=B_{r}(0)$. By the above formula, we can calculate directly
\begin{equation}
    \begin{split}
        \mathcal{C}_{g, \kappa}:&=\int_{\R^2}\Big(V^{\kappa}(z)g(V^{\kappa}(z))-J_{G}\Big(g(V^{\kappa}(z))\Big)\Big)dz\\
        &=\int_{\R^2}G(V^{\kappa}(z))dz=\frac{\kappa^2}{8\pi}.
    \end{split}
\end{equation}
And from this we find an interesting fact that $\mathcal{C}_{g,\kappa}$ does not change with the choice of $g$.

We similarly define $J_{F}$ as a convex conjugate of $F$, where $F(t)=\int_{0}^{t}f(s)ds$, and introduce the function $U^{\kappa}:\R^2 \to \R$ defined as the unique radially symmetric solution of the problem
\begin{equation}
    \left\{\begin{array}{l}
        -\Delta U^{\kappa}=f(U^{\kappa}), \quad x \in \mathbb{R}^{2} \\
        \int_{\mathbb{R}^{2}}f(U^{\kappa}) d x=\kappa.
    \end{array}\right.
\end{equation}
Let
\begin{equation}\label{2-18}
    \mathcal{W}(t)=\frac{\kappa^2}{4\pi}\log\frac{1}{2t}+{\kappa Wt}+\int_{\R^2}\frac{1+\alpha r^{*}}{1+\alpha t} J_{F}\left(\frac{1+\alpha t}{1+\alpha r^{*}}f(U^{\kappa}(z))\right)dz,\ \ t>0.
\end{equation}
where $r^*$ is the unique solution of \eqref{2.1} on $(0, \infty)$.

 Our first main result concerns the special case where $g=\alpha f$ for some $\alpha\geq 0$ and read as follows.

\begin{theorem}\label{thm1}
 Let $\alpha>0$ be given, $W$ and $\kappa>0$ satisfy $ W\ge \frac{(2-\alpha)_{+}^2\kappa}{32 \pi}$. Consider the traveling vortex pair given by \eqref{0-10}. Suppose that $(f,\alpha f)$ satisfies (H1)-(H4). Then for $\lambda>0$ large there is a traveling solution $\left(\omega^{\lambda}, \rho^{\lambda}, \Psi^{\lambda}\right)$ to the Boussinesq system \eqref{0-4}-\eqref{0-6} given by \eqref{0-11} and
 \begin{equation*}
     \begin{split}
         \omega^\la (x,t)=\zeta^{\lambda}\left(x_{1}-Wt, x_{2}\right)&=\lambda(1+\alpha x_2) f\left(\Psi^{\lambda}\left(x_{1}-Wt, x_{2}\right)-W x_{2}-\mu^{\lambda}\right),\\
         \rho^\la(x,t)=\eta^{\lambda}(x_1-Wt,x_2)&=-\lambda \alpha F(\Psi^{\lambda}(x_{1}-Wt, x_{2})-Wx_2-\mu^{\lambda}),
     \end{split}
 \end{equation*}
for all $\left(x_{1}, x_{2}\right) \in \Pi$,  which satisfies
\begin{equation}\label{0-20}
    \begin{split}
         &supp(\zeta^{\lambda})\cap \Pi=supp(\eta^{\lambda})\cap \Pi \subset B_{L\lambda^{-\frac{1}{2}}}(x^{\lambda}),\ \text{for some}\ x^{\lambda}=(0,x_2^{\lambda})\in \Pi, \\
        &\zeta^{\lambda}(x) \rightharpoonup \kappa \delta\left(x-b_{1}\right)-\kappa \delta\left(x-b_{2}\right),\ \ b_1=-b_2=r^* \mathbf{e}_{2},\  \ \text { as } \lambda \rightarrow+\infty,
    \end{split}
\end{equation}
where the convergence is in the sense of measures and $L>0$ is a constant independent of $\lambda$. Moreover, one has
\begin{equation}\label{0-21}
   \zeta^{\lambda}\left(x_{1}, x_{2}\right)=-\zeta^{\lambda}\left(x_{1}, -x_{2}\right)=\zeta^{\lambda}\left(-x_{1},x_{2}\right),\ \ \int_{\Pi} \zeta^{\lambda}dx=\kappa,
\end{equation}
and    \begin{equation*}
     \frac{d \mathcal{W}(x_2^{\lambda})}{dt}\to 0,\ \  x_2^{\lambda}\to r^*, \ \ \text{as} \ \lambda \to \infty. 
\end{equation*}

\end{theorem}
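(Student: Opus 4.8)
The plan is to realize the traveling pair as a maximizer of a constrained energy functional (the improved vorticity method) and then to pin down the core location by a sharp expansion of that energy as $\la\to\infty$. Working on $\Pi$, let $G_\Pi$ denote the Green's function of $-\Delta$ with Dirichlet data, so that the odd symmetry across $\{x_2=0\}$ in \eqref{0-21} is automatically encoded and only a single core in $\Pi$ must be built; its Robin function equals $\tfrac{1}{2\pi}\log(2x_2)$. On the admissible class
\[
\mathcal{A}_\la=\Big\{\zeta\in L^\infty(\Pi):\ \zeta\ge0,\ \int_\Pi\zeta\,dx=\kappa,\ \mathrm{supp}\,\zeta\subset B_{s_0}(x^*)\Big\},
\]
with $x^*$ near $r^*\mathbf{e}_2$ and $s_0$ a fixed small radius, I would maximize
\[
E_\la(\zeta)=\tfrac12\int_\Pi\zeta\,\mathcal{G}_\Pi\zeta\,dx-W\int_\Pi x_2\,\zeta\,dx-\la\int_\Pi J\Big(x_2,\tfrac{\zeta}{\la}\Big)\,dx,
\]
where $\mathcal{G}_\Pi\zeta=\phi$ and $J(x_2,\cdot)$ is the conjugate function \eqref{0-17} of $I(x_2,\cdot)$ attached to $i(x_2,t)=f(t)+x_2g(t)$ with $g=\alpha f$. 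The Legendre duality between $i(x_2,\cdot)$ and $\partial_sJ(x_2,\cdot)$ recorded after (H3)$'$ is exactly what will turn the Euler--Lagrange equation into the profile relation, $\mu^\la$ being the Lagrange multiplier for the mass constraint.

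\emph{Existence and the profile.} Assumption (H4) bounds the growth of $i$ at infinity and, together with (H3)/(H3)$'$, bounds the penalty term from below, so $E_\la$ is bounded above on $\mathcal{A}_\la$; weak-$*$ compactness of $\mathcal{A}_\la$ in $L^\infty$ and weak lower semicontinuity of the convex penalty yield a maximizer $\zeta^\la$. Varying $E_\la$ and inverting the duality gives $\partial_sJ(x_2,\zeta^\la/\la)=\phi^\la-Wx_2-\mu^\la$ on $\{\zeta^\la>0\}$, hence
\[
\zeta^\la=\la\,i\big(x_2,\phi^\la-Wx_2-\mu^\la\big)=\la(1+\alpha x_2)f\big(\phi^\la-Wx_2-\mu^\la\big).
\]
Choosing $a=\la f$ and $b(s)=-\la\alpha F(s-\mu^\la)$ (so $b'=-\la\alpha f$) recovers $\zeta=a-x_2b'$ and the stated $\rho^\la=-\la\alpha F(\phi^\la-Wx_2-\mu^\la)$, so that \eqref{0-12}--\eqref{0-13} hold; the symmetries and $\int_\Pi\zeta^\la=\kappa$ in \eqref{0-21} are immediate from $\mathcal{A}_\la$ and the Dirichlet symmetry of $G_\Pi$.

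\emph{Concentration.} Rescaling around the core center $x^\la$ by $\la^{-1/2}$, and using that $x_2\approx x_2^\la$ makes the coefficient $1+\alpha x_2$ nearly constant, the blown-up profile is governed by the compactly supported ground state $U^\kappa$ of $-\Delta U^\kappa=f(U^\kappa)$ with mass $\kappa$. A comparison argument together with the strict monotonicity (H2) then gives $\mathrm{supp}\,\zeta^\la\subset B_{L\la^{-1/2}}(x^\la)$ with $L$ independent of $\la$, which is \eqref{0-20} and in particular shows the support constraint in $\mathcal{A}_\la$ to be inactive, so $\zeta^\la$ is a genuine critical point on $\Pi$. The shrinking core size and $\int_\Pi\zeta^\la=\kappa$ then give the weak convergence $\zeta^\la\rightharpoonup\kappa\delta(\cdot-b_1)-\kappa\delta(\cdot-b_2)$ once $\lim x^\la$ is identified.

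\emph{Core location and the main obstacle.} The decisive step is to expand $E_\la(\zeta^\la)$ to the order at which the renormalized energy $\mathcal{W}$ of \eqref{2-18} appears and to match the two. The intrinsic self-energy of the core contributes a $t$-independent divergence $\tfrac{\kappa^2}{8\pi}\log\la+\cdots$ whose finite remainder, after the $\la^{-1/2}$ rescaling, produces the weighted $J_F$-integral in \eqref{2-18} with the factors $(1+\alpha r^*)/(1+\alpha t)$ that encode the spatially varying coefficient; the interaction with the negative image across $\{x_2=0\}$, governed by the Robin function $\tfrac{1}{2\pi}\log(2x_2)$, yields $\tfrac{\kappa^2}{4\pi}\log\tfrac{1}{2t}$, while the impulse term gives $\kappa Wt$. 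Since $\zeta^\la$ maximizes $E_\la$, comparing its energy with that of a model core centered at height $t$ forces $x_2^\la$ to converge to a minimizer of $\mathcal{W}$ on $(0,\infty)$; the threshold $W\ge(2-\alpha)_+^2\kappa/(32\pi)$ is exactly what guarantees that this minimizer exists and solves \eqref{2.1}, namely the point $r^*$, whence $x_2^\la\to r^*$ and $\tfrac{d}{dt}\mathcal{W}(x_2^\la)\to0$. I expect the principal difficulty to lie here: making the expansion rigorous uniformly in $\la$ requires controlling the error between the true nonlinear core and the model $U^\kappa$, and handling the delicate cancellations in the self-energy renormalization caused by the weight $1+\alpha x_2$, which is precisely what the particular form of \eqref{2-18} reflects.
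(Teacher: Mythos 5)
Your overall strategy is the same as the paper's (maximize the same functional $\tfrac12\int\zeta\mathcal{G}\zeta - W\int x_2\zeta - \la\int J(x_2,\zeta/\la)$ over a constrained class, invert the Legendre duality to get the profile equation, blow up at scale $\la^{-1/2}$ to the radial ground state $U^\kappa$, and locate the core by an energy comparison that produces the renormalized energy $\mathcal{W}$ and the condition $W\ge(2-\alpha)_+^2\kappa/(32\pi)$). However, there is a genuine gap at the very first step: your admissible class
\[
\mathcal{A}_\la=\Big\{\zeta\ge 0,\ \int_\Pi\zeta\,dx=\kappa,\ \mathrm{supp}\,\zeta\subset B_{s_0}(x^*)\Big\}
\]
carries no uniform $L^\infty$ bound, so it is \emph{not} weak-$*$ compact in $L^\infty$: a maximizing sequence may blow up in sup norm, its weak limit (in the sense of measures, or weakly in $L^1$ via uniform integrability) need not lie in $\mathcal{A}_\la$, and the existence of a maximizer is not established by the argument you sketch. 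This is exactly the difficulty the paper's construction is built around: it truncates the class at level $\Lambda/\varepsilon^2$ (with $\la=\varepsilon^{-2}$), which makes compactness trivial but forces the Euler--Lagrange relation to contain a ``patch part'' $\tfrac{\Lambda}{\varepsilon^2}\chi_{B_{\varepsilon,\Lambda}}$ on the set where the constraint is active; the paper then spends Lemmas 2.4--2.8 deriving an a priori bound on the stream function $\psi^{\varepsilon,\Lambda}$ and using (H3)$'$ together with (H4) (through $\lim_{s\to\infty}\bigl(\partial_sJ(r^*,s)-\tau\log s\bigr)=+\infty$) to pick a fixed $\Lambda_0$ for which the patch part has measure zero for all small $\varepsilon$. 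In your proposal (H4) is invoked only vaguely for boundedness of the energy, and this entire mechanism --- which is where the hypotheses (H3), (H4) actually do their work --- is absent; without it, neither existence of $\zeta^\la$ nor the clean identity $\zeta^\la=\la(1+\alpha x_2)f(\phi^\la-Wx_2-\mu^\la)$ is justified.

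Two smaller points. First, for the core location the paper does not compare against a ``model core centered at height $t$'' (which would require a sharp expansion of the maximizer's energy including its $O(1)$ self-energy remainder); it compares $\zeta^\varepsilon$ with its own vertical translate $\zeta^\varepsilon(\cdot-r^*\mathbf{e}_2+\bar r\mathbf{e}_2)$, so the divergent logarithmic self-interaction cancels identically and only the image term, the impulse term, and the penalty term (computed via the scaled convergence to $U^\kappa$) survive in the limit --- this is what produces $\mathcal{W}(\bar r)\le\mathcal{W}(r^*)$ with no delicate renormalization. Your attribution of the weighted $J_F$-integral in $\mathcal{W}$ to the self-energy remainder is off: it comes from the penalty term $\la\int J(x_2,\zeta/\la)$, whose limit depends on the height through the amplitude factor $(1+\alpha t)$. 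Second, the paper works on the fixed rectangle $D=\{|x_1|<1,\ r^*/2<x_2<2r^*\}$ with the inequality constraint $\int\zeta\le\kappa$ (equality is then deduced from positivity of the energy and of the multiplier), rather than a small ball with an equality constraint; this matters for obtaining the sign of $\mu^{\varepsilon,\Lambda}$ from the bathtub principle.
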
 
\begin{remarks}
    For $ \alpha=0$, the above theorem is reduced to a desingularization result for the Euler equation. One can see from the definition of $r^*$ that the limiting position is dependent not only on $W$ and $\kappa$ but also on the vorticity-density function $(f, g)$. This  is a special phenomenon of the Boussinesq system which does not occur for the Euler equation.
\end{remarks}

Recently, D. Cao, J. Wan and W. Zhan \cite{Cao5} has obtained a result of desingularization for the case of $f(s)=g(s)=s_{+}^{p}$  in a bounded domain. Using the above theorem, we can  verify that such special $(f,g)$ satisfies all assumptions (H1)-(H4) and hence as a corollary, we can easily get  the existence of traveling vortex pairs with the same density function $(f,g)$ as \cite{Cao5}.

 Our next two results are concerned with the case where $f$ and $g$ are both general functions but one of them is vanishing as $\la \to +\infty$.
\begin{theorem}\label{thm2}
    Suppose that $f(0^{+})=0$, $f(s)>0$ for $s>0$ and $(f, \delta_{\lambda}g)$ satisfies (H1)-(H4) with the constants $\delta_0$ and $\delta_1$ independent of $\lambda$ in (H3), where $ \delta_\lambda >0 \ \text{and}\ \delta_{\lambda}\to 0 \ \text{as} \ \lambda \to \infty $.  Then for $\lambda>0$ large there is a traveling solution $\left(\omega^{\lambda}, \rho^{\lambda}, \Psi^{\lambda}\right)$ to the Boussinesq system \eqref{0-4}-\eqref{0-6} given by \eqref{0-11} and
     \begin{equation*}
        \begin{split}
            \zeta^{\lambda}\left(x_{1}-Wt, x_{2}\right)&=\lambda\Big( f\left(\Psi^{\lambda}\left(x_{1}-Wt, x_{2}\right)-W x_{2}-\mu^{\lambda}\right)\\
            &\ \ \ \ \ \ \ \ \ \ \ \ \ \ +x_2 \delta_{\lambda} g\left(\Psi^{\lambda}\left(x_{1}-Wt, x_{2}\right)-W x_{2}-\mu^{\lambda}\right)\Big),\\
            \eta^{\lambda}(x_1-Wt,x_2)&=-\lambda \delta_{\lambda} G(\Psi^{\lambda}(x_{1}-Wt, x_{2})-Wx_2-\mu^{\lambda}),
        \end{split}
    \end{equation*}
for all $\left(x_{1}, x_{2}\right) \in \Pi$, which satisfies \eqref{0-21} and
\begin{equation}
    \begin{split}
        &supp(\zeta^{\lambda})\cap \Pi=supp(\eta^{\lambda})\cap \Pi \subset B_{L\lambda^{-\frac{1}{2}}}(x^{\lambda}),\ \text{for some}\ x^{\lambda}=(0,x_2^{\lambda})\in \Pi, \\
        &\zeta^{\lambda}(x) \rightharpoonup \kappa \delta\left(x-b_{1}\right)-\kappa \delta\left(x-b_{2}\right),\ \ b_1=-b_2=r_1^* \mathbf{e}_{2},\  \ \text { as } \lambda \rightarrow+\infty.
    \end{split}
\end{equation}
where the convergence is in the sense of measures and $L>0$ is a constant independent of $\lambda$.
 Moreover, one has
 \begin{equation*}
     \frac{d \mathcal{W}_1(x_2^{\lambda})}{dt}\to 0, \ x_2^{\lambda} \to r_1^*,\ \ \text{as}\ \lambda \to \infty.
 \end{equation*}
where $r_1^*=\frac{\kappa}{4 \pi W}$ and $\mathcal{W}_1$ be defined as follows
 $$ \mathcal{W}_1(t):=\frac{\kappa^2}{4\pi}\log\frac{1}{2t}+{\kappa Wt}.$$
   \end{theorem}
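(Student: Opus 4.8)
\emph{Proof plan.} The plan is to run the constrained variational (improved vorticity) scheme used for Theorem~\ref{thm1}, now for the $\lambda$-dependent nonlinearity $i(x_2,t)=f(t)+x_2\delta_\lambda g(t)$; the essential new feature is that every estimate must remain uniform as $\delta_\lambda\to0$, which is exactly what the $\lambda$-independence of $\delta_0,\delta_1$ in (H3) provides. Concretely, I would work on $\Pi$ with the odd reflection across $\{x_2=0\}$ forced by the counter-rotating pair and let $\mathcal{G}_\Pi$ be the corresponding Green operator, with kernel $\frac{1}{2\pi}\log\frac{|x-\bar y|}{|x-y|}$, $\bar y=(y_1,-y_2)$. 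With $J(x_2,\cdot)$ the modified conjugate function \eqref{0-17} of $I(x_2,t)=\int_0^t i(x_2,s)\,ds$, I maximize
\[
E_\lambda(\zeta)=\frac12\int_\Pi\zeta\,\mathcal{G}_\Pi\zeta\,dx-W\int_\Pi x_2\,\zeta\,dx-\lambda\int_\Pi J\Big(x_2,\tfrac{\zeta}{\lambda}\Big)\,dx
\]
over $\mathcal{A}_\lambda=\{\zeta\in L^\infty(\Pi):\zeta\ge0,\ \int_\Pi\zeta=\kappa,\ \mathrm{supp}\,\zeta\subset B_{s}(\bar x),\ \bar x\in\Pi\}$ with a fixed confining radius $s>0$. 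Admissibility, the upper bound for $E_\lambda$ on $\mathcal{A}_\lambda$, and the control of maximizing sequences all follow from (H1)--(H4) as in Theorem~\ref{thm1}, and by the $\lambda$-independence of $\delta_0,\delta_1$ these bounds do not deteriorate as $\delta_\lambda\to0$.

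\emph{Existence and the equation.} Weak-$*$ compactness in $L^\infty$, upper semicontinuity of the quadratic form against the compact operator $\mathcal{G}_\Pi$, and convexity of $J(x_2,\cdot)$ produce a maximizer $\zeta^\lambda$. Setting $\phi^\lambda=\mathcal{G}_\Pi\zeta^\lambda=:\Psi^\lambda$ and taking the first variation under the mass constraint yields a Lagrange multiplier $\mu^\lambda$ with $\phi^\lambda-Wx_2-\mu^\lambda=\partial_sJ(x_2,\zeta^\lambda/\lambda)$ on $\{\zeta^\lambda>0\}$. Since $i(x_2,\cdot)$ and $\partial_sJ(x_2,\cdot)$ are inverse graphs, inverting gives $\zeta^\lambda=\lambda\big(f+x_2\delta_\lambda g\big)(\phi^\lambda-Wx_2-\mu^\lambda)$, the asserted formula, and then $\eta^\lambda=-\lambda\delta_\lambda G(\phi^\lambda-Wx_2-\mu^\lambda)$ from \eqref{0-15}. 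The symmetry and mass identities \eqref{0-21} follow from the odd reflection together with $\int_\Pi\zeta^\lambda=\kappa$.

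\emph{Concentration (the main obstacle).} This is the crux. Testing $E_\lambda$ against a rescaled copy of $U^\kappa$ gives a sharp lower bound for $E_\lambda(\zeta^\lambda)$; inserting it into the Ambrosetti--Rabinowitz inequality (H3)$'$ controls the penalty $\lambda\int J(x_2,\zeta^\lambda/\lambda)$ and turns the energy estimate into a diameter bound, so that $\mathrm{supp}\,\zeta^\lambda\subset B_{L\lambda^{-1/2}}(x^\lambda)$ and, for $\lambda$ large, the confinement $B_s(\bar x)$ becomes inactive and $\zeta^\lambda$ solves the free problem. Rescaling $x=x^\lambda+\lambda^{-1/2}y$, both the linear part of $Wx_2$ and the whole buoyancy term $x_2\delta_\lambda g$ drop out in the limit---the latter precisely because $\delta_\lambda\to0$---so the rescaled vorticities converge to the radial ground state $U^\kappa$ of $-\Delta U=f(U)$, $\int f(U)=\kappa$, where $f(0^+)=0$ and $f>0$ on $(0,\infty)$ are used to pin down the limit. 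The difficulty is to carry out this blow-up with all constants uniform in $\lambda$: one must rule out vanishing or splitting of the mass $\kappa$ and bound the image interaction uniformly, which is exactly where the $\lambda$-independence of $\delta_0,\delta_1$ is indispensable. The weak-$*$ convergence $\zeta^\lambda\rightharpoonup\kappa\delta(x-b_1)-\kappa\delta(x-b_2)$ then follows from the concentration and the odd reflection.

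\emph{Limiting location.} Finally I expand $E_\lambda(\zeta^\lambda)$ in the center height $x_2^\lambda$. Up to $o(1)$ the renormalized self-interaction and the conjugate-function penalty contribute terms independent of $x_2^\lambda$---here the $g$-part degenerates to a constant because $\delta_\lambda\to0$, in contrast to Theorem~\ref{thm1}, where it genuinely shifts the location---while the image interaction and the momentum term together assemble into $-\mathcal{W}_1(x_2^\lambda)$. Maximality therefore forces $\frac{d\mathcal{W}_1}{dt}(x_2^\lambda)\to0$, and since $\mathcal{W}_1'(t)=-\frac{\kappa^2}{4\pi t}+\kappa W$ vanishes only at $t=\frac{\kappa}{4\pi W}=r_1^*$, we conclude $x_2^\lambda\to r_1^*$, which completes the proof.
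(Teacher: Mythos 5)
Your overall architecture matches the paper's: a penalized, constrained maximization on the half-plane with the image Green kernel, an Euler--Lagrange identity inverted through $\partial_s J$, a $\lambda^{-1/2}$ diameter bound, blow-up of the rescaled stream function to the radial profile $U^{\kappa}$ (using $f(0^+)=0$, $f>0$ on $(0,\infty)$), and a location argument in which the conjugate-function penalty becomes height-independent because $\delta_\lambda\to 0$, leaving only the Kirchhoff--Routh part $\mathcal{W}_1$ with critical point $r_1^*=\kappa/(4\pi W)$. That last observation is indeed the essential new content of Theorem \ref{thm2} compared with Theorem \ref{thm1}, and your version of the location step is the same in substance as the paper's Lemma \ref{le3.3}, which compares $\zeta^{\varepsilon}$ with the translate $\tilde\zeta^{\varepsilon}=\zeta^{\varepsilon}(\cdot-r_1^{*}\mathbf{e}_2+\bar r\,\mathbf{e}_2)$ and uses Corollary \ref{lem3-2} to see that the two penalty terms converge to the same constant $\int_{\mathbb{R}^2}J_F(f(U^{\kappa}))\,dy$.

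There is, however, a genuine gap in your existence step. Your admissible class $\mathcal{A}_\lambda$ imposes only $\zeta\ge 0$, $\int_\Pi\zeta=\kappa$ and a support constraint, with no uniform $L^\infty$ bound, and for such a class ``weak-$*$ compactness in $L^\infty$'' fails: a maximizing sequence may concentrate (e.g.\ $\zeta_k=\frac{\kappa}{\pi r_k^2}\chi_{B_{r_k}}$ with $r_k\to 0$ has no weak-$*$ convergent subsequence in $L^\infty$, only as measures, and a measure-valued limit is useless both for the quadratic term and for the lower semicontinuity of the penalty). Removing this obstruction is precisely what the paper's ``improved vorticity method'' does: the class $\mathcal{A}_{\varepsilon,\Lambda}$ carries the truncation $0\le\zeta\le\Lambda/\varepsilon^2$, which makes compactness trivial (Lemma \ref{le2.1}) but produces an extra patch term $\frac{\Lambda}{\varepsilon^2}\chi_{B_{\varepsilon,\Lambda}}$ in the Euler--Lagrange identity (Lemma \ref{le2.2}); one must then prove, via the multiplier bound (Lemmas \ref{le2.3}--\ref{le2.5}), the a priori bound on $\psi^{\varepsilon,\Lambda}$ (Lemma \ref{le11}) and the growth assumption (H4), that for a suitable fixed $\Lambda_0$ the patch part has measure zero (Lemma \ref{le12}), so the truncation is inactive and $\zeta^{\varepsilon}=\varepsilon^{-2}i(x_2,\psi^{\varepsilon})$ a.e. Your proposal contains no substitute for this mechanism: you neither keep a truncation nor invoke the superlinear growth of $J$ to get uniform integrability and Dunford--Pettis weak $L^1$ compactness (the classical alternative, which would then still require extra work to recover $L^\infty$ regularity and the bathtub-type variational inequality). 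As written, the maximizer $\zeta^\lambda$ on which everything else rests has not been constructed. A secondary, fixable issue: your floating confinement ball $B_s(\bar x)$, $\bar x\in\Pi$, makes the class non-convex and must be shown inactive also near the boundary $x_2=0$; the paper instead fixes the box $D$ with $r_1^*$ in its interior and proves $\mathrm{dist}(\mathrm{supp}\,\zeta^{\varepsilon},\partial D)>0$ before extending the equation to all of $\Pi$.
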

\begin{theorem}\label{thm3}
    Suppose that $W>0$ and $\kappa>0$. Let $g\in C^1(\R^2)$, $g'(s)>0$ for $s>0$ and $(\delta_{\lambda}f, g)$ satisfies (H1)-(H4) with the constants $\delta_0$ and $\delta_1$ independent of $\lambda$ in (H3),  where $ \delta_\lambda >0 \ \text{and}\ \delta_{\lambda}\to 0 \ \text{as} \ \lambda \to \infty $. Then for $\lambda>0$ large there is a traveling solution $\left(\omega^{\lambda}, \rho^{\lambda}, \Psi^{\lambda}\right)$ to the Boussinesq system \eqref{0-4}-\eqref{0-6} given by \eqref{0-11} and 
     \begin{equation*}
        \begin{split}
            \zeta^{\lambda}\left(x_{1}-Wt, x_{2}\right)&=\lambda\Big( \delta_{\lambda} f\left(\Psi^{\lambda}\left(x_{1}-Wt, x_{2}\right)-W x_{2}-\mu^{\lambda}\right)\\
            &\ \ \ \ \ \ \ \ \ \ \ \ \ \ +x_2  g\left(\Psi^{\lambda}\left(x_{1}-Wt, x_{2}\right)-W x_{2}-\mu^{\lambda}\right)\Big),\\
            \eta^{\lambda}(x_1-Wt,x_2)&=-\lambda G(\Psi^{\lambda}(x_{1}-Wt, x_{2})-Wx_2-\mu^{\lambda}),
        \end{split}
    \end{equation*}
    for all $\left(x_{1}, x_{2}\right) \in \Pi$, which satisfies \eqref{0-21} and
    \begin{equation}
        \begin{split}
            &supp(\zeta^{\lambda})\cap \Pi=supp(\eta^{\lambda})\cap \Pi \subset B_{L\lambda^{-\frac{1}{2}}}(x^{\lambda}),\ \text{for some}\ x^{\lambda}=(0,x_2^{\lambda})\in \Pi, \\
            &\zeta^{\lambda}(x) \rightharpoonup \kappa \delta\left(x-b_{1}\right)-\kappa \delta\left(x-b_{2}\right),\ \ b_1=-b_2=r_2^* \mathbf{e}_{2},\  \ \text { as } \lambda \rightarrow+\infty.
        \end{split}
    \end{equation}
    where the convergence is in the sense of measures and $L>0$ is a constant independent of $\lambda$.
    Moreover, one has
    \begin{equation*}
        \frac{d \mathcal{W}_2(x_2^{\lambda})}{dt}\to 0, \ x_2^{\lambda} \to r_2^*,\ \ \text{as}\ \lambda \to \infty.
    \end{equation*}
    where  $r
    _2^*=\frac{\kappa}{8\pi W}$ and $\mathcal{W}_2$ is defined as 
    \begin{equation}
        \mathcal{W}_{2}(t)=\frac{\kappa^2}{4\pi}\log\frac{1}{2t}+{\kappa Wt}+  \frac{r_2^*}{t}\int_{\R^2}J_{G}\Big(\frac{t}{r_2^*}g(V^{\kappa}(z))\Big)dz.
    \end{equation}
\end{theorem}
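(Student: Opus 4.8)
\emph{Proof proposal (plan for Theorem~\ref{thm3}).} The plan is to adapt the variational (improved vorticity) scheme to the buoyancy-dominated regime. Since the coefficient of $f$ is the vanishing factor $\delta_\lambda$, the leading nonlinearity in \eqref{0-16} is $x_2g$, so the natural blow-up profile is the radial solution $V^\kappa$ of $-\Delta V^\kappa=g(V^\kappa)$, $\int_{\R^2}g(V^\kappa)=\kappa$, and the relevant renormalization constant is $\mathcal{C}_{g,\kappa}=\int_{\R^2}G(V^\kappa)=\kappa^2/(8\pi)$. Working in $\Pi$ with the Dirichlet Green's operator $\mathcal{G}_\Pi$ (kernel $\tfrac1{2\pi}\log\frac{|x-\bar y|}{|x-y|}$, $\bar y=(y_1,-y_2)$), I would maximize
\[ E_\lambda(\zeta)=\tfrac12\int_\Pi \zeta\,\mathcal{G}_\Pi\zeta\,dx-W\int_\Pi x_2\zeta\,dx-\lambda\int_\Pi J\!\left(x_2,\tfrac{\zeta}{\lambda}\right)dx \]
over the admissible class $\mathcal{A}_\lambda=\{\zeta\in L^\infty(\Pi):\zeta\ge0,\ \int_\Pi\zeta\,dx=\kappa,\ \mathrm{supp}\,\zeta\subset B_{s_0}(z)\text{ for some }z\in\Pi\}$, where $J(x_2,\cdot)$ is the modified conjugate \eqref{0-17} of $i=\delta_\lambda f+x_2g$, chosen precisely so that the stationarity condition $\partial_sJ(x_2,\zeta/\lambda)=\psi^\lambda$ reproduces \eqref{0-16}.

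A priori bounds coming from the $-W\int x_2\zeta$ term and the penalization confine maximizing sequences to a fixed bounded region, where $\mathcal{G}_\Pi$ is smoothing; hence the quadratic term is weakly continuous and the convex penalization is weakly lower semicontinuous, so a maximizer $\zeta^\lambda$ exists. The Euler--Lagrange analysis gives $\zeta^\lambda=\lambda\big(\delta_\lambda f(\psi^\lambda)+x_2g(\psi^\lambda)\big)_+$ with $\psi^\lambda=\phi^\lambda-Wx_2-\mu^\lambda$, the multiplier $\mu^\lambda$ enforcing the mass constraint; the symmetry \eqref{0-21} follows from uniqueness of the $x_1$-Steiner-symmetric maximizer together with the reflection structure of $\mathcal{G}_\Pi$. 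I would next prove the quantitative concentration $\mathrm{supp}\,\zeta^\lambda\subset B_{L\lambda^{-1/2}}(x^\lambda)$ with $x^\lambda=(0,x_2^\lambda)$, and bounds keeping $x_2^\lambda$ in a fixed compact subset of $(0,\infty)$, so that (H1)--(H4) apply uniformly; rescaling by $(\lambda x_2^\lambda)^{1/2}$ and passing to the limit identifies the blow-up profile as $V^\kappa$, the $\delta_\lambda f$ contribution disappearing as $\lambda\to\infty$.

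The heart of the matter is the limiting position. Here I would use a local Pohozaev identity: test $-\Delta\phi^\lambda=\lambda(\delta_\lambda f(\psi^\lambda)+x_2g(\psi^\lambda))$ with $\partial_{x_2}\psi^\lambda$ and integrate over $B_\rho(x^\lambda)$ with $\lambda^{-1/2}\ll\rho\ll1$. The boundary terms of the Rellich--Pohozaev identity, evaluated against the regular part of $\phi^\lambda$, produce the image-induced drift $\sim\tfrac{\kappa^2}{4\pi x_2^\lambda}$ and, through $\int_\Pi\zeta^\lambda=\kappa$, the term $W\kappa$; the interior contribution of the buoyancy nonlinearity collapses, via $x_2\,g(\psi)\,\partial_{x_2}\psi=\partial_{x_2}(x_2G(\psi))-G(\psi)$, to $\int_{\R^2}G(V^\kappa)=\mathcal{C}_{g,\kappa}=\kappa^2/(8\pi)$. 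Assembling these ingredients yields exactly the stationarity relation $\tfrac{d}{dt}\mathcal{W}_2(x_2^\lambda)\to0$, and since $r_2^*=\kappa/(8\pi W)$ is the unique critical point of $\mathcal{W}_2$, one concludes $x_2^\lambda\to r_2^*$. The presence of $\mathcal{C}_{g,\kappa}$ is exactly what distinguishes $r_2^*=\kappa/(8\pi W)$ from the Euler value $r_1^*=\kappa/(4\pi W)$ of Theorem~\ref{thm2}.

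The main obstacle I expect is this last step: extracting the $O(1)$ balance from the Pohozaev identity requires controlling the regular part of $\phi^\lambda$ to order $o(1)$ uniformly across the core, showing that the $\delta_\lambda f$-term affects neither the profile normalization nor the force balance at leading order, and establishing convergence of the rescaled vorticity strong enough (using $g\in C^1$, $g'>0$, hence nondegeneracy of $V^\kappa$) that the core integrals converge to their $V^\kappa$-values, reproducing the term $\tfrac{r_2^*}{t}\int_{\R^2}J_G(\tfrac{t}{r_2^*}g(V^\kappa))\,dz$ in $\mathcal{W}_2$. The separation-of-scales estimate $\mathrm{supp}\,\zeta^\lambda\subset B_{L\lambda^{-1/2}}(x^\lambda)$ underlying everything is itself delicate and would be obtained by combining the energy bound with the strict monotonicity of $i(x_2,\cdot)$ from (H2).
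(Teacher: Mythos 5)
Your first two stages (penalized energy maximization, bathtub/Euler--Lagrange analysis, concentration at scale $\lambda^{-1/2}$, and identification of the rescaled profile as $V^\kappa$ with the $\delta_\lambda f$ term washing out) coincide with the paper's route, which simply re-uses Lemmas \ref{le2.1}--\ref{le2.8} of Section \ref{sect2} together with the Theorem \ref{thm3} analogues of Lemma \ref{lem2.13} and Corollary \ref{lem3.5}. The genuine divergence is at the limiting position, and there your route differs in kind: the paper never writes a Pohozaev identity for $\psi^\lambda$; it exploits maximality through a translation comparison (Lemma \ref{le3.6}), testing $\mathcal{E}_\varepsilon$ against $\tilde{\zeta}^\varepsilon=\zeta^\varepsilon(\cdot-r_2^*\mathbf{e}_2+\bar{r}\mathbf{e}_2)$, for which the logarithmic self-energy cancels and the surviving image, transport and penalization terms pass to the limit to give $\mathcal{W}_2(\bar{r})\le\mathcal{W}_2(r_2^*)$; this forces $\bar{r}=r_2^*$ only because the paper pre-engineers the box $D$ (the constant $c_1$ chosen through $N'(r_2^*)>0$ in \eqref{C_choose}) so that $r_2^*$ is the unique minimizer of $\mathcal{W}_2$ on $[c_1r_2^*,r_2^*/c_1]$, and the stationarity claim $\frac{d}{dt}\mathcal{W}_2(x_2^\lambda)\to 0$ is then a mere corollary of $x_2^\lambda\to r_2^*$. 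Your local Pohozaev identity instead yields the force balance $-\frac{\kappa^2}{4\pi \bar{r}}+\kappa W+\frac{\kappa^2}{8\pi \bar{r}}=0$ directly; your computation of the interior buoyancy term, $\lambda\int G(\psi^\lambda)\,dx\to\frac{1}{\bar{r}}\int_{\R^2}G(V^\kappa)\,dz=\frac{\kappa^2}{8\pi\bar{r}}$, is exactly right by \eqref{3-9}, and since the resulting equation has a unique root in all of $(0,\infty)$ you avoid both the localization of $D$ and the minimization analysis of $\mathcal{W}_2$, obtaining the theorem's stationarity statement directly rather than as an afterthought. The trade-offs you should make explicit: the Pohozaev identity requires the semilinear equation $-\Delta\psi^\lambda=\lambda\bigl(\delta_\lambda f(\psi^\lambda)+x_2g(\psi^\lambda)\bigr)$ to hold on a full ball $B_\rho(x^\lambda)$ with $\lambda^{-1/2}\ll\rho$, i.e., the support constraint must be inactive in a fixed neighborhood of the core, whereas the paper's comparison argument needs no such thing and works even if the core were to sit on $\partial D$. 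Your floating-ball admissible class is designed for exactly this, but it is non-convex and not obviously closed under weak-star limits (the center may drift to $x_2=0$ or to infinity), so existence, the Euler--Lagrange identity, and confinement --- routine in the paper's fixed-$D$ setting --- all require real proofs in yours; moreover, without the $L^\infty$ cut-off $\Lambda\varepsilon^{-2}$ and its removal (Lemma \ref{le12}) your maximizer is not a priori bounded, and the boundary analysis on $\partial B_\rho$ needs $C^1$ control of the regular part of $\phi^\lambda$ plus vanishing of the dipole moment (which your choice of $x^\lambda$ as center of mass provides). None of this is fatal; modulo that technical work your plan is a valid alternative proof of the same statement.
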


    \begin{remarks}
        What we want to emphasize is that it's not a necessary condition for $g\in C^1(\R^2)$. For example, for some special functions $g(t)=1_{\{t>0\}}$, $\kappa=\pi$, we can calculate $\int_{\R^2}G(V^{\kappa}(z))dz=\frac{\kappa^2}{8\pi}$, but for $s$ greater than $1$, we have $J_{G}(s)=+\infty$, so the integral in $C_{g,\kappa}$ may diverge. However, we also believe that this type of characteristic function can be obtained by the method with some modifications in this paper.
     \end{remarks}

    \section{Proof of Theorem \ref{thm1}  }\label{sect2}
    In this section, we will give the proof for Theorem \ref{thm1}. Let $G(\cdot,\cdot)$ be the Green function for $-\Delta$ in $\Pi$ with zero Dirichlet data, namely,
    \begin{equation*}
        G(x,y)=\frac{1}{2\pi}\log\frac{|\bar{x}-y|}{|x-y|},
    \end{equation*}
    where $\bar{x}=(x_1,-x_2)$ denotes reflection of $x=(x_1,x_2)$ in the $x_1$-axis. Define the Green's operator $\mathcal{G}$ as follows
    \begin{equation*}
        \mathcal{G}\zeta(x)=\int_\Pi G(x,y)\zeta(y)dy.
    \end{equation*}
    Let $J$ be the conjugate function to $I$ defined by \eqref{0-17}, and $J_{F}$ be defined as  
    \begin{equation*}
        J_{F}(s)=\sup _{t\in \R}\left[s t-F(t)\right]\  \text { if } s \geq 0 ;\  J_{F}(s)=0 \text { if } s<0,
    \end{equation*}
where $F(t)=\int_{0}^{t}f(s)ds$. 
    Then $J$ has the well-known properties
    \begin{equation}\label{1-g}
        J(x_2,s)=i(x_2,t)t -I(x_2,t), \ \ \partial_{ss}J(x_2,s)=[\partial_{t}i(x_2,t)]^{-1},
    \end{equation}
 with $ s=i(x_2,t) $ or, equivalently, $t=\partial_{s}J(x_2,s)$. Similarly, $J_{F}$ has the same property. Moreover,
      when $g=\alpha f$, it holds
    \begin{align*}
        J(x_2,s)&=\sup_{t\in \R}(st-I(x_2,t))=(1+\alpha x_2)\sup_{t\in \R}\Big(\frac{s}{1+\alpha x_2}t-F(t)\Big)\\ 
        &=(1+\alpha x_2)J_{F}\Big(\frac{s}{1+\alpha x_2}\Big).
    \end{align*}

    \subsection{Variational problem}
     In this paper, we consider this problem by using an improved vorticity method.
    
    For fixed $W>0$ and $\kappa>0$, we consider the energy functional given by
    \begin{equation*}
        \mathcal{E}_\varepsilon(\zeta)=\frac{1}{2}\int_\Pi{\zeta \mathcal{G}\zeta}dx-{W}\int_{\Pi}x_2\zeta dx-\frac{1}{\varepsilon^2}\int_\Pi J(x_2,\varepsilon^2\zeta)dx.
    \end{equation*}

  Let \[D=\{x\in \Pi~|~-1<x_1<1,\ r^*/2<x_2<2r^*\},  \] where $r^*>0$ is uniquely determined by the following
  \begin{equation}\label{2.1}
      -\frac{\kappa^2}{4\pi r^*}+\kappa W+\frac{\alpha}{1+\alpha r^*}\frac{\kappa^2}{8\pi}=0,
  \end{equation}  
    and
    \begin{equation*}
        \mathcal{A}_{\varepsilon,\Lambda}=\{\zeta\in L^\infty(\Pi)~|~ 0\le \zeta \le \frac{\Lambda}{\varepsilon^2}~ \text{a.e.}, \int_{\Pi}\zeta dx \le\kappa,~ supp(\zeta)\subseteq D \},
    \end{equation*}
    where $\varepsilon>0$ is a small parameter, $f(0^+)=\lim_{t\to 0^+}f(t)$ and $\Lambda>\max\{1,f(0^+)\}$ is an appropriate constant (whose value will be determined later, see Lemma \ref{le12} below). We will seek maximizers of $\mathcal{E}_\varepsilon$ relative to $\mathcal{A}_{\varepsilon,\Lambda}$.

    Let ${\zeta}^*$ be the Steiner symmetrization of $\zeta$ with respect to the line $x_1=0$ in $\Pi$ (see Appendix I of \cite{Bu0,Nor}).

    For the existence of maximizers for $\mathcal{E}_\varepsilon$ over $\mathcal{A}_{\varepsilon, \Lambda}$  we have

\begin{lemma}\label{le2.1}
    There exists $\zeta^{\varepsilon, \Lambda}=(\zeta^{\varepsilon, \Lambda})^* \in \mathcal{A}_{\varepsilon, \Lambda}$ such that
    \begin{equation*}
        \mathcal{E}_\varepsilon(\zeta^{\varepsilon, \Lambda})= \max_{\tilde{\zeta} \in \mathcal{A}_{\varepsilon, \Lambda}}\mathcal{E}_\varepsilon(\tilde{\zeta})<+\infty.
    \end{equation*}
  
\end{lemma}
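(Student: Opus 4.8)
The plan is to establish the lemma by the direct method in the calculus of variations, followed by a rearrangement argument for the symmetry claim. First I would check that $\mathcal{E}_\varepsilon$ is bounded above on $\mathcal{A}_{\varepsilon,\Lambda}$: since $J(x_2,\cdot)\ge 0$ by \eqref{0-17}, the penalization term $-\varepsilon^{-2}\int_\Pi J(x_2,\varepsilon^2\zeta)\,dx$ is nonpositive; the linear term is controlled by $W(\sup_D x_2)\kappa$ because $\int_\Pi\zeta\le\kappa$ and $x_2$ is bounded on $D$; and the quadratic term is bounded because every $\zeta\in\mathcal{A}_{\varepsilon,\Lambda}$ is supported in the bounded set $D$ and obeys $\|\zeta\|_{L^2}\le(\Lambda/\varepsilon^2)|D|^{1/2}$, while $\mathcal{G}$ is bounded on $L^2(D)$. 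Hence $M:=\sup_{\mathcal{A}_{\varepsilon,\Lambda}}\mathcal{E}_\varepsilon<+\infty$, and I take a maximizing sequence $\{\zeta_n\}\subset\mathcal{A}_{\varepsilon,\Lambda}$.

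Because $\mathcal{A}_{\varepsilon,\Lambda}$ is bounded in $L^2(\Pi)$ with all supports contained in the fixed bounded set $D$, after passing to a subsequence I may assume $\zeta_n\rightharpoonup\zeta^{\varepsilon,\Lambda}$ weakly in $L^2(\Pi)$. The defining constraints — the pointwise bounds $0\le\zeta\le\Lambda/\varepsilon^2$, the mass inequality $\int_\Pi\zeta\,dx\le\kappa$, and $\mathrm{supp}(\zeta)\subseteq D$ — are all convex and strongly closed, hence weakly closed, so the weak limit $\zeta^{\varepsilon,\Lambda}$ again lies in $\mathcal{A}_{\varepsilon,\Lambda}$.

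It then remains to show that $\mathcal{E}_\varepsilon$ is weakly upper semicontinuous along this sequence. The operator $\mathcal{G}$ is compact on $L^2(D)$, since its kernel $G(x,y)$ is only logarithmically singular, so $\mathcal{G}$ maps $L^2(D)$ into $H^1$ and the latter embeds compactly back into $L^2(D)$; consequently $\mathcal{G}\zeta_n\to\mathcal{G}\zeta^{\varepsilon,\Lambda}$ strongly in $L^2$, and pairing this strong convergence with the weak convergence of $\zeta_n$ gives $\int_\Pi\zeta_n\mathcal{G}\zeta_n\,dx\to\int_\Pi\zeta^{\varepsilon,\Lambda}\mathcal{G}\zeta^{\varepsilon,\Lambda}\,dx$. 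The linear term converges as well, because $x_2\mathbf{1}_D\in L^2(\Pi)$. For the penalization term I use that $s\mapsto J(x_2,s)$ is convex (a partial Legendre conjugate of $I(x_2,\cdot)$), nonnegative and continuous, so $\zeta\mapsto\varepsilon^{-2}\int_\Pi J(x_2,\varepsilon^2\zeta)\,dx$ is convex and strongly lower semicontinuous, hence weakly lower semicontinuous, and therefore $-\varepsilon^{-2}\int_\Pi J(x_2,\varepsilon^2\zeta)\,dx$ is weakly upper semicontinuous. Combining the three contributions yields $\mathcal{E}_\varepsilon(\zeta^{\varepsilon,\Lambda})\ge\limsup_n\mathcal{E}_\varepsilon(\zeta_n)=M$, so $\zeta^{\varepsilon,\Lambda}$ is a maximizer and $M<+\infty$.

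Finally, to arrange $\zeta^{\varepsilon,\Lambda}=(\zeta^{\varepsilon,\Lambda})^*$ I would replace the maximizer by its Steiner symmetrization in $x_1$. Since $D=\{-1<x_1<1,\ r^*/2<x_2<2r^*\}$ is symmetric about $x_1=0$, symmetrization keeps $\zeta$ in $\mathcal{A}_{\varepsilon,\Lambda}$ (it preserves the pointwise bounds, the mass, and the support); moreover both the linear term and the penalization term depend only on the one-dimensional distribution of $\zeta$ on each horizontal slice $x_2=\mathrm{const}$, which is preserved by the symmetrization, so these terms are unchanged. The only term that can move is the kinetic energy, and here I invoke the Riesz rearrangement inequality: for fixed heights $x_2,y_2>0$ the kernel $G(x,y)=\frac{1}{4\pi}\log\frac{(x_1-y_1)^2+(x_2+y_2)^2}{(x_1-y_1)^2+(x_2-y_2)^2}$ is an even, strictly decreasing function of $|x_1-y_1|$, so slice-wise rearrangement does not decrease $\int_\Pi\zeta\mathcal{G}\zeta\,dx$. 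Hence $\mathcal{E}_\varepsilon((\zeta^{\varepsilon,\Lambda})^*)\ge\mathcal{E}_\varepsilon(\zeta^{\varepsilon,\Lambda})=M$, and $(\zeta^{\varepsilon,\Lambda})^*$ is the desired symmetric maximizer. The two places demanding care — and where I expect the real work to sit — are the weak lower semicontinuity of the convex penalization term (the compactness of $\mathcal{G}$ and the weak continuity of the quadratic form being comparatively routine) and the verification that the Green kernel is slice-wise symmetric-decreasing, so that Steiner symmetrization genuinely does not lower the energy.
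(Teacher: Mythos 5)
Your proof is correct and follows essentially the same direct-method argument as the paper: boundedness of $\mathcal{E}_\varepsilon$ from above, a weakly convergent maximizing sequence in the convex, weakly closed set $\mathcal{A}_{\varepsilon,\Lambda}$, convergence of the quadratic term (the paper uses $G\in L^1(D\times D)$ with weak-star $L^\infty$ convergence where you use compactness of $\mathcal{G}$ on $L^2$), and weak lower semicontinuity of the convex penalization term. The only organizational difference is that the paper symmetrizes the maximizing sequence itself, citing Burton's standard arguments, so that the weak limit is automatically Steiner symmetric, whereas you symmetrize the maximizer at the end via the slice-wise Riesz rearrangement inequality; both steps rest on the same rearrangement fact, so this is a cosmetic rather than substantive deviation.
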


\begin{proof} It is easy to verify that $\mathcal{E}_\epsilon$ is bounded from above over $\mathcal{A}_{\ep, \Lambda}$.
   We may take a sequence $\{\zeta_{k}\}\subset \mathcal{A}_{\varepsilon, \Lambda}$ such that as $k\to +\infty$
    \begin{equation*}
        \begin{split}
            \mathcal{E}_\varepsilon(\zeta_{k}) & \to \sup\{\mathcal{E}_\varepsilon(\tilde{\zeta})~|~\tilde{\zeta}\in \mathcal{A}_{\varepsilon, \Lambda}\}, \\
            \zeta_{k} & \to \zeta\in L^{\infty}({D})~~\text{weakly-star}.
        \end{split}
    \end{equation*}
    Clearly  $\zeta\in \mathcal{A}_{\varepsilon, \Lambda}$. Using the standard arguments (see \cite{Bu0}), we may assume that $\zeta_{k}=(\zeta_{k})^*$, and hence $\zeta=\zeta^*$. Since $G(\cdot,\cdot)\in L^1(D\times D)$, we have
    \begin{equation*}
        \lim_{k\to +\infty}\int_D{\zeta_{k} \mathcal{G}\zeta_{k}}dx = \int_D{\zeta \mathcal{G}\zeta}dx.
    \end{equation*}
    On the other hand, we have the lower semicontinuity for each  of the rest terms in $\mathcal{E}_{\varepsilon}$, namely,
    \begin{equation*}
        \begin{split}
            \liminf_{k\to +\infty} \int_{D}x_2\zeta_k dx  & \ge  \int_{D}x_2\zeta dx, \\
            \liminf_{k\to +\infty}\int_D J(x_2,\varepsilon^2\zeta_k)dx   & \ge \int_D J(x_2,\varepsilon^2\zeta)dx.
        \end{split}
    \end{equation*}
    Consequently, we conclude that $ \mathcal{E}_\varepsilon(\zeta)=\lim_{k\to +\infty} \mathcal{E}_\varepsilon(\zeta_k)=\sup_{\mathcal{A}_{\varepsilon, \Lambda}} \mathcal{E}_\varepsilon$, with $\zeta\in \mathcal{A}_{\varepsilon, \Lambda}$, which completes the proof.
\end{proof}

Since $\zeta^{\varepsilon, \Lambda} \in L^\infty(\Pi)$, it follows that $\mathcal{G}\zeta^{\varepsilon, \Lambda}\in C^1_{\text{loc}}(\Pi)$. Moreover, by $\zeta^{{\varepsilon, \Lambda}}=(\zeta^{\varepsilon, \Lambda})^*$ we can conclude that $\mathcal{G}\zeta^{\varepsilon, \Lambda}$ is symmetric decreasing in $x_1$ and $\partial_{x_1}\mathcal{G}\zeta^{\varepsilon, \Lambda}(x) <0$ if $\zeta^{\varepsilon, \Lambda}\not\equiv 0$ and $x_1>0$. Therefore, every level set of $\mathcal{G}\zeta^{{\varepsilon, \Lambda}}-{Wx_2}$ has measure zero by the implicit function theorem.

\begin{lemma}\label{le2.2}
    Let $\zeta^{{\varepsilon, \Lambda}}$ be a maximizer as in Lemma \ref{le2.1}, then there exists a Lagrange multiplier $\mu^{{\varepsilon, \Lambda}}\ge 0$ such that
    \begin{equation}\label{2-1}
        \zeta^{{\varepsilon, \Lambda}}=\frac{1}{ \varepsilon^2}i\left(x_2,\psi^{{\varepsilon, \Lambda}}\right)\chi_{A_{\varepsilon,\varLambda}}+\frac{\varLambda}{\varepsilon^2}\chi_{B_{\varepsilon,\varLambda}} \ \ a.e.\  \text{in}\  D,
    \end{equation}
    where
    \begin{equation}\label{2-2}
        \psi^{{\varepsilon, \Lambda}}=\mathcal{G}\zeta^{{\varepsilon, \Lambda}}-{Wx_2}-\mu^{{\varepsilon, \Lambda}},
    \end{equation}
and
\[A_{\varepsilon,\varLambda}:=\{(x_1,x_2)\in \Pi ~|~0<\psi^{{\varepsilon, \Lambda}}<\partial_{s}J(x_2,\varLambda)\},\]
 \[ B_{\varepsilon,\varLambda}:=\{(x_1,x_2)\in \Pi ~|~\psi^{{\varepsilon, \Lambda}} \ge \partial_{s}J(x_2,\varLambda)\}. \]
    Moreover, it holds $\int_D\zeta^{\varepsilon, \Lambda} dx=\kappa$ provided $\mathcal{E}_\varepsilon(\zeta^{\varepsilon, \Lambda})>0$  and every $\mu^{\varepsilon, \Lambda}$ satisfying \eqref{2-1} is positive.
\end{lemma}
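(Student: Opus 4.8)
The plan is to obtain \eqref{2-1} as the Euler--Lagrange (Kuhn--Tucker) condition for the constrained maximization of $\mathcal{E}_\varepsilon$ over the convex set $\mathcal{A}_{\varepsilon,\Lambda}$, and then to extract the two supplementary assertions by scaling and one-sided perturbations. First I would compute the first variation. Since $\mathcal{A}_{\varepsilon,\Lambda}$ is convex, for any $\tilde{\zeta}\in\mathcal{A}_{\varepsilon,\Lambda}$ the competitor $\zeta_s=\zeta^{\varepsilon,\Lambda}+s(\tilde{\zeta}-\zeta^{\varepsilon,\Lambda})$ belongs to $\mathcal{A}_{\varepsilon,\Lambda}$ for all $s\in[0,1]$, so maximality gives $\frac{d}{ds}\mathcal{E}_\varepsilon(\zeta_s)\big|_{s=0^+}\le0$. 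The quadratic and linear terms differentiate directly using the symmetry of $\mathcal{G}$; for $\frac{1}{\varepsilon^2}\int_D J(x_2,\varepsilon^2\zeta_s)\,dx$ I would use the convexity of $J(x_2,\cdot)$ to pass to the limit in the difference quotient (the slopes converge monotonically), the bound $\partial_sJ(x_2,\varepsilon^2\zeta^{\varepsilon,\Lambda})\le\partial_sJ(x_2,\Lambda)$ on $D$ guaranteeing integrability. This produces the variational inequality
\[
\int_D\big(\mathcal{G}\zeta^{\varepsilon,\Lambda}-Wx_2-\partial_sJ(x_2,\varepsilon^2\zeta^{\varepsilon,\Lambda})\big)\,(\tilde{\zeta}-\zeta^{\varepsilon,\Lambda})\,dx\le0,\qquad\forall\,\tilde{\zeta}\in\mathcal{A}_{\varepsilon,\Lambda}.
\]

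Next I would read off the pointwise structure. Freezing $h:=\mathcal{G}\zeta^{\varepsilon,\Lambda}-Wx_2-\partial_sJ(x_2,\varepsilon^2\zeta^{\varepsilon,\Lambda})$ at the maximizer, the inequality says exactly that $\zeta^{\varepsilon,\Lambda}$ maximizes the linear functional $\tilde{\zeta}\mapsto\int_D h\,\tilde{\zeta}\,dx$ over $\mathcal{A}_{\varepsilon,\Lambda}$. This is a bathtub-type problem governed by a single threshold: there is $\mu^{\varepsilon,\Lambda}\ge0$ (the multiplier for the one-sided constraint $\int_\Pi\zeta\,dx\le\kappa$, hence nonnegative) such that $\zeta^{\varepsilon,\Lambda}=\Lambda/\varepsilon^2$ where $h>\mu^{\varepsilon,\Lambda}$, $\zeta^{\varepsilon,\Lambda}=0$ where $h<\mu^{\varepsilon,\Lambda}$, and $h=\mu^{\varepsilon,\Lambda}$ on $\{0<\zeta^{\varepsilon,\Lambda}<\Lambda/\varepsilon^2\}$. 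Writing $\psi^{\varepsilon,\Lambda}=\mathcal{G}\zeta^{\varepsilon,\Lambda}-Wx_2-\mu^{\varepsilon,\Lambda}$ as in \eqref{2-2} and using that, by (H1)--(H2) and \eqref{1-g}, $\partial_sJ(x_2,\cdot)$ is the continuous strictly increasing inverse of $i(x_2,\cdot)$ with $\partial_sJ(x_2,0)=0$, the relation $h=\mu^{\varepsilon,\Lambda}$ becomes $\psi^{\varepsilon,\Lambda}=\partial_sJ(x_2,\varepsilon^2\zeta^{\varepsilon,\Lambda})$, i.e. $\varepsilon^2\zeta^{\varepsilon,\Lambda}=i(x_2,\psi^{\varepsilon,\Lambda})$ on $A_{\varepsilon,\Lambda}=\{0<\psi^{\varepsilon,\Lambda}<\partial_sJ(x_2,\Lambda)\}$; likewise $h>\mu^{\varepsilon,\Lambda}$ becomes $\psi^{\varepsilon,\Lambda}\ge\partial_sJ(x_2,\Lambda)$ (the set $B_{\varepsilon,\Lambda}$, where $\zeta^{\varepsilon,\Lambda}=\Lambda/\varepsilon^2$), and $h<\mu^{\varepsilon,\Lambda}$ becomes $\psi^{\varepsilon,\Lambda}\le0$ (where $\zeta^{\varepsilon,\Lambda}=0$). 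Since every level set of $\mathcal{G}\zeta^{\varepsilon,\Lambda}-Wx_2$ has measure zero, as recorded just above the lemma, the borderline sets $\{\psi^{\varepsilon,\Lambda}=0\}$ and $\{\psi^{\varepsilon,\Lambda}=\partial_sJ(x_2,\Lambda)\}$ are negligible, and the three cases assemble into \eqref{2-1} almost everywhere in $D$.

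For the supplementary claims I would argue by scaling. Testing $\mathcal{E}_\varepsilon(\tau\zeta^{\varepsilon,\Lambda})\le\mathcal{E}_\varepsilon(\zeta^{\varepsilon,\Lambda})$ for $\tau\in[0,1]$ (admissible since $\tau\zeta^{\varepsilon,\Lambda}\in\mathcal{A}_{\varepsilon,\Lambda}$) yields $\frac{d}{d\tau}\mathcal{E}_\varepsilon(\tau\zeta^{\varepsilon,\Lambda})\big|_{\tau=1^-}\ge0$, which together with the Fenchel identity $\frac{1}{\varepsilon^2}J(x_2,\varepsilon^2\zeta)=\zeta\,\partial_sJ(x_2,\varepsilon^2\zeta)-\frac{1}{\varepsilon^2}I\big(x_2,\partial_sJ(x_2,\varepsilon^2\zeta)\big)$ and assumption (H3) lets me compare $\mathcal{E}_\varepsilon(\zeta^{\varepsilon,\Lambda})$ with a sign-definite remainder. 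If $\int_D\zeta^{\varepsilon,\Lambda}\,dx<\kappa$, then complementary slackness forces $\mu^{\varepsilon,\Lambda}=0$; feeding this into the energy identity and invoking (H3) together with $x_2\ge r^*/2$ on $D$ yields $\mathcal{E}_\varepsilon(\zeta^{\varepsilon,\Lambda})\le0$, contradicting the hypothesis, so $\int_D\zeta^{\varepsilon,\Lambda}\,dx=\kappa$. For strict positivity of the multiplier I would assume $\mu^{\varepsilon,\Lambda}=0$ and identify $\mu^{\varepsilon,\Lambda}$ with the value of $\mathcal{G}\zeta^{\varepsilon,\Lambda}-Wx_2$ across the transition set $\{\psi^{\varepsilon,\Lambda}=0\}$ bordering $\mathrm{supp}\,\zeta^{\varepsilon,\Lambda}$; the condition $\mathcal{E}_\varepsilon(\zeta^{\varepsilon,\Lambda})>0$ forces the self-interaction $\tfrac12\int\zeta^{\varepsilon,\Lambda}\mathcal{G}\zeta^{\varepsilon,\Lambda}$ to dominate, making this value strictly positive and producing a contradiction.

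I expect the passage from the variational inequality to the single-threshold pointwise description, and the positivity of $\mu^{\varepsilon,\Lambda}$, to be the delicate points. Because $\mathcal{G}$ is nonlocal, $h$ depends on $\zeta^{\varepsilon,\Lambda}$ itself, so the problem is linear only after the maximizer is frozen, and one must verify the self-consistency and that a \emph{single} number $\mu^{\varepsilon,\Lambda}$ (not a whole interval) separates the active constraints. The measure-zero property of the level sets of $\mathcal{G}\zeta^{\varepsilon,\Lambda}-Wx_2$ is exactly what removes the ambiguity at the thresholds, while the sign of $\mu^{\varepsilon,\Lambda}$ is where the hypothesis $\mathcal{E}_\varepsilon(\zeta^{\varepsilon,\Lambda})>0$, assumption (H3), and the location of $D$ (through $r^*$) are genuinely used.
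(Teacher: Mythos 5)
Your proof of the Euler--Lagrange form \eqref{2-1} takes essentially the paper's route: one-sided variations in the convex class $\mathcal{A}_{\varepsilon,\Lambda}$, an adapted bathtub principle producing a single nonnegative threshold $\mu^{\varepsilon,\Lambda}$ (the paper's \eqref{2-3}), and inversion of $\partial_sJ(x_2,\cdot)$. One correction is needed there: your claim that $\partial_sJ(x_2,\cdot)$ is the \emph{continuous, strictly increasing} inverse of $i(x_2,\cdot)$ with $\partial_sJ(x_2,0)=0$ fails whenever $f(0^+)>0$, a case that (H1)--(H2) explicitly allow ($f$ is only continuous off the origin; this is precisely why $\mathcal{A}_{\varepsilon,\Lambda}$ is built with $\Lambda>\max\{1,f(0^+)\}$). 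In that case $i(x_2,\cdot)$ jumps at $t=0$, $\partial_sJ(x_2,\cdot)\equiv 0$ on $[0,f(0^+)]$, and on the set $\{0<\varepsilon^{2}\zeta^{\varepsilon,\Lambda}\le f(0^+)\}$ one has $\psi^{\varepsilon,\Lambda}=0$ while $\zeta^{\varepsilon,\Lambda}>0$, so \eqref{2-1} would fail there; this set must be shown to be null. Your appeal to the negligibility of level sets of $\mathcal{G}\zeta^{\varepsilon,\Lambda}-Wx_2$ does repair this, since that set lies in $\{\mathcal{G}\zeta^{\varepsilon,\Lambda}-Wx_2=\mu^{\varepsilon,\Lambda}\}$; the paper instead kills it using $-\Delta(\mathcal{G}\zeta^{\varepsilon,\Lambda}-Wx_2)=\zeta^{\varepsilon,\Lambda}$ a.e.\ and the vanishing of the Laplacian a.e.\ on level sets. (Your parallel claim that $\{\psi^{\varepsilon,\Lambda}=\partial_sJ(x_2,\Lambda)\}$ is negligible for the same reason is unjustified --- it is not a level set of $\mathcal{G}\zeta^{\varepsilon,\Lambda}-Wx_2$ because $\partial_sJ(x_2,\Lambda)$ depends on $x_2$ --- but it is also unnecessary, since \eqref{2-3} makes that set consistent with \eqref{2-1}.)

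The genuine gap is in the ``moreover'' clause. In the lemma, positivity of every admissible multiplier is a \emph{hypothesis}, not a conclusion: the assertion is that $\int_D\zeta^{\varepsilon,\Lambda}\,dx=\kappa$ holds \emph{provided} $\mathcal{E}_\varepsilon(\zeta^{\varepsilon,\Lambda})>0$ \emph{and} every $\mu^{\varepsilon,\Lambda}$ satisfying \eqref{2-1} is positive; positivity itself is only obtained later, for $\varepsilon$ small, from the quantitative lower bound of Lemma \ref{le2.5}, and that is how Corollary \ref{le2.6} is assembled. Under this reading your own first observation already finishes the proof: if $\int_D\zeta^{\varepsilon,\Lambda}\,dx<\kappa$, the mass constraint is inactive, so one-sided perturbations show that $\mu^{\varepsilon,\Lambda}=0$ is itself an admissible multiplier in \eqref{2-1}, contradicting the hypothesis. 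The two further arguments you attach do not hold up. First, with $\mu^{\varepsilon,\Lambda}=0$ the energy identity combined with $(H3)'$ gives at best (restricting for simplicity to $|B_{\varepsilon,\Lambda}|=0$)
\begin{equation*}
    \mathcal{E}_\varepsilon(\zeta^{\varepsilon,\Lambda})\le \Big(\delta_0-\tfrac{1}{2}\Big)\int_D\zeta^{\varepsilon,\Lambda}\psi^{\varepsilon,\Lambda}\,dx+\Big(\delta_1-\tfrac{Wr^*}{4}\Big)\int_D\zeta^{\varepsilon,\Lambda}\,dx,
\end{equation*}
whose right-hand side need not be nonpositive: (H3) allows any $\delta_0\in(0,1)$ and places no smallness restriction on $\delta_1$, so no contradiction with $\mathcal{E}_\varepsilon(\zeta^{\varepsilon,\Lambda})>0$ follows. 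Second, your sketch that $\mathcal{E}_\varepsilon(\zeta^{\varepsilon,\Lambda})>0$ forces every admissible multiplier to be positive cannot work at this level of generality: positivity of the energy only yields $\int_D\zeta^{\varepsilon,\Lambda}\big(\tfrac12\mathcal{G}\zeta^{\varepsilon,\Lambda}-Wx_2\big)\,dx>0$, which is entirely compatible with $\mu^{\varepsilon,\Lambda}=0$ being admissible; the lemma makes no such claim, and the positivity actually used downstream is the asymptotic bound of Lemma \ref{le2.5}, which grows like $\frac{\kappa}{2\pi}\log\frac{1}{\varepsilon}$. In short: keep your complementary-slackness step, read multiplier positivity as a hypothesis, and delete the energy-based contradiction arguments.
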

 \begin{proof}
    Note that we may assume $\zeta^{\varepsilon,\Lambda} \not\equiv 0$, otherwise the conclusion is obtained by letting $\mu^{\varepsilon, \Lambda}=0$.
    We consider a family of variations of $\zeta^{\varepsilon, \Lambda}$
    \begin{equation*}
        \zeta_{(t)}=\zeta^{\varepsilon, \Lambda}+t(\tilde{\zeta}-\zeta^{\varepsilon, \Lambda}),\ \ \ t\in[0,1],
    \end{equation*}
    defined for arbitrary $\tilde{\zeta}\in \mathcal{A}_{\varepsilon, \Lambda}$. Since $\zeta^{\varepsilon, \Lambda}$ is a maximizer, we have
    \begin{equation*}
        \begin{split}
            0 & \ge \frac{d}{dt}\mathcal{E}_\varepsilon(\zeta_{(t)})|_{t=0^+} \\
            & =\int_{D}(\tilde{\zeta}-\zeta^{\varepsilon, \Lambda})\left[\mathcal{G}\zeta^{\varepsilon, \Lambda}-{Wx_2}-\partial_sJ(x_2,\varepsilon^2\zeta^{\varepsilon, \Lambda}) \right]dx.
        \end{split}
    \end{equation*}
    This implies that for any  $\tilde{\zeta}\in \mathcal{A}_{\varepsilon,\Lambda}$, there holds
    \begin{equation*}
        \begin{split}
            \int_{D}\zeta^{\varepsilon,\Lambda} &\left[\mathcal{G}\zeta^{\varepsilon, \Lambda}-{Wx_2}-\partial_sJ(x_2,\varepsilon^2\zeta^{\varepsilon, \Lambda})\right]dx  \\
            & \ \ \ \ \ \ \ \ \ \ \ \ \ \  \ge \int_{D}\tilde{\zeta}  \left[\mathcal{G}\zeta^{\varepsilon, \Lambda}-{Wx_2}-\partial_sJ(x_2,\varepsilon^2\zeta^{\varepsilon, \Lambda})\right]dx.
        \end{split}
    \end{equation*}
    By an adapted bathtub principle (see Lieb-Loss \cite{Lieb}, \S1.14), we obtain that for any point in $D$, it holds
    \begin{align}\label{2-3}
        \begin{cases}
            ~\mathcal{G}\zeta^{\varepsilon, \Lambda}-{Wx_2}- \mu^{\varepsilon, \Lambda} \ge &\partial_sJ(x_2,\varepsilon^2\zeta^{\varepsilon, \Lambda})\ \ \ \ \  \mbox{whenever}\  \zeta^{\varepsilon, \Lambda}=\frac{\Lambda}{\varepsilon^2}, \\
            ~\mathcal{G}\zeta^{\varepsilon, \Lambda}-{Wx_2}- \mu^{\varepsilon, \Lambda}= & \partial_sJ(x_2,\varepsilon^2\zeta^{\varepsilon, \Lambda})\ \ \ \ \  \mbox{whenever}\  0<\zeta^{\varepsilon, \Lambda}<\frac{\Lambda}{\varepsilon^2}, \\
            ~\mathcal{G}\zeta^{\varepsilon, \Lambda}-{Wx_2}- \mu^{\varepsilon, \Lambda}\le  & \partial_sJ(x_2,\varepsilon^2\zeta^{\varepsilon, \Lambda}) \ \ \ \ \  \mbox{whenever}\  \zeta^{\varepsilon, \Lambda}=0,
        \end{cases}
    \end{align}
    for some $\mu^{\varepsilon, \Lambda} \ge 0$. It follows that $\{0<\zeta^{\varepsilon, \Lambda}\le f(0^+)\varepsilon^{-2}\}\subseteq\{\mathcal{G}\zeta^{\varepsilon, \Lambda}-{Wx_2}=\mu^{\varepsilon, \Lambda}\}$. Since $-\Delta(\mathcal{G}\zeta^{\varepsilon, \Lambda}-{Wx_2})=\zeta^{\varepsilon, \Lambda}$ almost everywhere in $D$, we conclude that $\left|\{0<\zeta^{\varepsilon, \Lambda}\le f(0^+)\varepsilon^{-2}\}\right|=0$. Recall that for $t>0$ and $s>f(0^+)$, there holds
    \begin{equation*}
        t= \partial_sJ(x_2,s)\ \ \ \text{if and only if}\ \ \ s= i(x_2,t).
    \end{equation*}
    Now the stated form $\eqref{2-1}$ follows from \eqref{2-3} immediately. Moreover, when $\mathcal{E}_\varepsilon(\zeta^{\varepsilon, \Lambda})>0$ and $\mu^{\varepsilon, \Lambda}>0$, by the bathtub principle \cite{Lieb}, we have $\int_D\zeta^{\varepsilon,\Lambda} d\nu=\kappa$. The proof is thus complete.
\end{proof}

\subsection{Asymptotic behavior of the maximizers}
In the following, we study the asymptotic behavior of $\zeta^{\varepsilon, \Lambda}$ when $\varepsilon \to 0^+$. In the sequel, we will use the same $C$ to denote positive constants independent of $\varepsilon$.

To begin with, we give a lower bound of the energy.
\begin{lemma}\label{le2.3}
 We have
    \begin{equation*}
        \mathcal{E}_\varepsilon(\zeta^{{\varepsilon, \Lambda}})\ge \frac{\kappa^{2}}{4 \pi} \log \frac{1}{\varepsilon}-C.
    \end{equation*}
\end{lemma}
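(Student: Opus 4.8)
The plan is to exploit that $\zeta^{\varepsilon,\Lambda}$ maximizes $\mathcal{E}_\varepsilon$ over $\mathcal{A}_{\varepsilon,\Lambda}$, so it suffices to exhibit a single admissible test function whose energy already realizes the claimed lower bound. First I would fix the concentration point $x^*=r^*\mathbf{e}_2=(0,r^*)$, which lies in the interior of $D$, and define the test vorticity
$$\hat\zeta_\varepsilon=\frac{\kappa}{\pi s^2\varepsilon^2}\,\chi_{B_{s\varepsilon}(x^*)},$$
where $s>0$ is a fixed constant chosen so large that $\kappa/(\pi s^2)\le\Lambda$, which guarantees $0\le\hat\zeta_\varepsilon\le\Lambda/\varepsilon^2$. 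By construction $\int_\Pi\hat\zeta_\varepsilon\,dx=\kappa\le\kappa$, and for $\varepsilon$ small the ball $B_{s\varepsilon}(x^*)$ is contained in $D$; hence $\hat\zeta_\varepsilon\in\mathcal{A}_{\varepsilon,\Lambda}$ and $\mathcal{E}_\varepsilon(\zeta^{\varepsilon,\Lambda})\ge\mathcal{E}_\varepsilon(\hat\zeta_\varepsilon)$.

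Next I would bound the three terms of $\mathcal{E}_\varepsilon(\hat\zeta_\varepsilon)$ separately. The two lower-order terms are immediately controlled: since $\mathrm{supp}(\hat\zeta_\varepsilon)\subset D$ we have $x_2\le 2r^*$, so $W\int_\Pi x_2\hat\zeta_\varepsilon\,dx\le 2Wr^*\kappa$; and since $\varepsilon^2\hat\zeta_\varepsilon=\kappa/(\pi s^2)$ is a fixed constant on its support, continuity of $J(x_2,\cdot)$ gives $\frac{1}{\varepsilon^2}\int_\Pi J(x_2,\varepsilon^2\hat\zeta_\varepsilon)\,dx\le\frac{1}{\varepsilon^2}\,|B_{s\varepsilon}(x^*)|\,\max_{x\in D}J\big(x_2,\kappa/(\pi s^2)\big)\le C$. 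Both contributions are therefore $O(1)$.

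The heart of the argument is the self-interaction term. I would split $G(x,y)=-\frac{1}{2\pi}\log|x-y|+\frac{1}{2\pi}\log|\bar x-y|$; on $\mathrm{supp}(\hat\zeta_\varepsilon)$ the reflected factor $|\bar x-y|$ stays comparable to $2r^*$, so its contribution to $\frac12\int_\Pi\int_\Pi\hat\zeta_\varepsilon\,G\,\hat\zeta_\varepsilon$ is bounded. For the singular part I would rescale $x=x^*+s\varepsilon\xi$, $y=x^*+s\varepsilon\eta$, turning $-\log|x-y|$ into $-\log(s\varepsilon)-\log|\xi-\eta|$; the Jacobian and the normalization then combine to give
$$\frac{1}{2}\int_\Pi\!\!\int_\Pi\hat\zeta_\varepsilon(x)\Big(\!-\tfrac{1}{2\pi}\log|x-y|\Big)\hat\zeta_\varepsilon(y)\,dx\,dy=\frac{\kappa^2}{4\pi}\log\frac{1}{\varepsilon}-\frac{\kappa^2}{4\pi}\log s+O(1),$$
where the $O(1)$ absorbs the convergent double integral $\int_{B_1}\!\int_{B_1}\log|\xi-\eta|\,d\xi\,d\eta$. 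Collecting the three estimates yields $\mathcal{E}_\varepsilon(\hat\zeta_\varepsilon)\ge\frac{\kappa^2}{4\pi}\log\frac{1}{\varepsilon}-C$, and the lemma follows from maximality.

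I expect the only delicate point to be the self-energy rescaling: one must track the exact coefficient $\kappa^2/(4\pi)$ and verify that the regular part of the Green function, together with the $-\log|\xi-\eta|$ remainder, genuinely contribute only bounded quantities uniformly in $\varepsilon$. Checking admissibility (the height bound fixing $s$, and $B_{s\varepsilon}(x^*)\subset D$ for small $\varepsilon$) is routine but must be verified before invoking maximality.
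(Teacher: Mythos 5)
Your proposal is correct and takes essentially the same route as the paper: the paper also tests the maximizer against a rescaled characteristic function of an $\varepsilon$-ball in $D$, namely $\tilde{\zeta}_1^\varepsilon=\frac{1}{\varepsilon^2}\chi_{B_{\varepsilon\sqrt{\kappa/\pi}}(x_0)}$ (your $\hat\zeta_\varepsilon$ with $s=\sqrt{\kappa/\pi}$, admissible since $\Lambda>1$), and then invokes maximality. The only difference is that the paper dismisses the energy estimate as ``a simple calculation,'' whereas you carry out the self-energy rescaling, the reflected-kernel bound, and the $O(1)$ control of the $W$- and $J$-terms explicitly.
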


\begin{proof}
    The key idea is to select a suitable test function. Let $x_0\in D$ be a fixed point and $$\tilde{\zeta}_1^\varepsilon:=\frac{1}{\varepsilon^2} \chi_{_{B_{\varepsilon\sqrt{{\kappa}/{\pi}}}(x_0)}},$$
    where $\chi_{_A}$ denotes the characteristic function of a set $A$.
    It is clear that $\tilde{\zeta}_1^{\varepsilon} \in \mathcal{A}_{\varepsilon, \Lambda}$  if $\varepsilon$ is sufficiently small. By a simple calculation, we get
  \begin{equation*}
    \begin{split}
        \mathcal{E}(\tilde{\zeta}_1^{\varepsilon})\ge \frac{\kappa^{2}}{4 \pi} \log \frac{1}{\varepsilon}-C.
    \end{split}
\end{equation*}
    Since $\zeta^{\varepsilon, \Lambda}$ is a maximizer, we have $\mathcal{E}(\zeta^{\varepsilon, \Lambda})\ge \mathcal{E}(\tilde{\zeta}_1^{\varepsilon})$ and the proof is thus complete.
\end{proof}

   We now turn to estimate the Lagrange multiplier $\mu^{\varepsilon, \Lambda}$. 
\begin{lemma}\label{le2.4}
        There holds
        \begin{align*}
             \mu^{\varepsilon,\Lambda} & \ge \frac{2\mathcal{E}_\varepsilon(\zeta^{\varepsilon, \Lambda})}{\kappa} +\frac{W}{\kappa}\int_D x_2 \zeta^{\varepsilon, \Lambda}  dx-|2\delta_0-1|\kappa^{-1}\int_D\partial_sJ(x_2,\Lambda)\zeta^{\varepsilon,\Lambda}dx \\
            &\ \ \ \ \  -(C+o_{\Lambda\varepsilon^{-2}}(1))\left(1+\left(\int_D \partial_sJ(x_2,\Lambda)\zeta^{\varepsilon,\Lambda}dx\right)^\frac{1}{2}\right) .
        \end{align*}
\end{lemma}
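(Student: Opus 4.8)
The plan is to turn the variational characterization of $\zeta^{\varepsilon,\Lambda}$ into an exact identity for $\mu^{\varepsilon,\Lambda}$ and then bound the nonlinear remainder via (H3). Write $\psi:=\psi^{\varepsilon,\Lambda}=\mathcal G\zeta^{\varepsilon,\Lambda}-Wx_2-\mu^{\varepsilon,\Lambda}$ and $\zeta:=\zeta^{\varepsilon,\Lambda}$. First I would test the maximizer against itself. Since $\mathcal E_\varepsilon(\zeta)>0$ for small $\varepsilon$ by Lemma \ref{le2.3}, Lemma \ref{le2.2} gives $\int_D\zeta\,dx=\kappa$; multiplying $\psi$ by $\zeta$ and integrating yields $\int_D\psi\zeta=\int_D\zeta\mathcal G\zeta-W\int_D x_2\zeta-\mu^{\varepsilon,\Lambda}\kappa$. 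Using the definition of $\mathcal E_\varepsilon$ to eliminate the quadratic term $\int_D\zeta\mathcal G\zeta$ produces the identity
$$\mu^{\varepsilon,\Lambda}\kappa=2\mathcal E_\varepsilon(\zeta)+W\int_D x_2\zeta\,dx+\Big(\tfrac{2}{\varepsilon^2}\int_D J(x_2,\varepsilon^2\zeta)\,dx-\int_D\psi\zeta\,dx\Big).$$
Thus the lemma reduces to a lower bound for the bracketed quantity $Q:=\tfrac{2}{\varepsilon^2}\int_D J-\int_D\psi\zeta$.

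Next I would estimate the integrand of $Q$ pointwise through the decomposition \eqref{2-1}. On $A_{\varepsilon,\Lambda}$ one has $\psi=\partial_s J(x_2,\varepsilon^2\zeta)$, so the conjugacy relations \eqref{1-g} give $\tfrac{2}{\varepsilon^2}J(x_2,\varepsilon^2\zeta)-\psi\zeta=\zeta\psi-\tfrac{2}{\varepsilon^2}I(x_2,\psi)$, and applying (H3) with $t=\psi,\ i(x_2,t)=\varepsilon^2\zeta$ bounds this below by $(1-2\delta_0)\zeta\psi-2\delta_1\zeta$. On $B_{\varepsilon,\Lambda}$, where $\varepsilon^2\zeta=\Lambda$ and $\psi\ge t_\Lambda(x_2):=\partial_s J(x_2,\Lambda)$, the same computation with $s=\Lambda$ gives $\tfrac{2}{\varepsilon^2}J(x_2,\Lambda)-\psi\zeta\ge (1-2\delta_0)\zeta t_\Lambda-2\delta_1\zeta-(\psi-t_\Lambda)\zeta$. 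In both regions the argument of the factor $(1-2\delta_0)\zeta(\cdot)$ lies in $[0,t_\Lambda]$, so it is bounded below by $-|2\delta_0-1|\,t_\Lambda\zeta$. Integrating over $D=A_{\varepsilon,\Lambda}\cup B_{\varepsilon,\Lambda}$ and using $\int_D\zeta=\kappa$ gives
$$Q\ge-|2\delta_0-1|\int_D\partial_s J(x_2,\Lambda)\zeta\,dx-2\delta_1\kappa-\int_{B_{\varepsilon,\Lambda}}(\psi-t_\Lambda)\zeta\,dx.$$

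The main obstacle is the last overflow integral over $B_{\varepsilon,\Lambda}$, where $\psi$ exceeds the truncation threshold. Here I would exploit that $\zeta=\Lambda\varepsilon^{-2}$ on $B_{\varepsilon,\Lambda}$, so the mass bound forces $|B_{\varepsilon,\Lambda}|\le\kappa\varepsilon^2/\Lambda$, and that $u:=(\psi-t_\Lambda)_+$ solves $-\Delta u=\zeta+O(1)=\Lambda\varepsilon^{-2}+O(1)$ on $B_{\varepsilon,\Lambda}$ with zero boundary data (the smoothness of $t_\Lambda(x_2)$ on the compact $x_2$-range of $D$ contributes only the harmless $O(1)$ term). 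A Talenti-type rearrangement comparison with the solution on the disk of equal area then gives $\|u\|_{L^\infty}\le\frac{(\Lambda\varepsilon^{-2}+O(1))|B_{\varepsilon,\Lambda}|}{4\pi}\le\frac{\kappa}{4\pi}\big(1+o_{\Lambda\varepsilon^{-2}}(1)\big)$, i.e. $O(1)$ despite the singular amplitude of $\zeta$. Hence $\int_{B_{\varepsilon,\Lambda}}(\psi-t_\Lambda)\zeta\le\|u\|_{L^\infty}\int_{B_{\varepsilon,\Lambda}}\zeta$ is bounded, and after a Cauchy–Schwarz refinement it is controlled by $(C+o_{\Lambda\varepsilon^{-2}}(1))\big(1+(\int_D\partial_s J(x_2,\Lambda)\zeta)^{1/2}\big)$. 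Absorbing $2\delta_1\kappa$ into the constant and substituting the bound on $Q$ into the identity for $\mu^{\varepsilon,\Lambda}\kappa$ yields the claim.

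I expect the genuine difficulty to be exactly this control of $B_{\varepsilon,\Lambda}$: a crude estimate using $\sup_D\psi$ via the full potential $\mathcal G\zeta$ would only produce an $O(\log(1/\varepsilon))$ bound and ruin the result. The decisive point is that after subtracting the large Lagrange constant $\mu^{\varepsilon,\Lambda}$ the relative stream function $\psi$ is merely $O(1)$ on the vortex core, and it is the maximum-principle/rearrangement comparison above that makes this quantitative. The remaining ingredients — the self-testing identity and the pointwise (H3) estimates on $A_{\varepsilon,\Lambda}$ and $B_{\varepsilon,\Lambda}$ — are routine once the conjugacy relations \eqref{1-g} are in hand.
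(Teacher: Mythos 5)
Your reduction is correct and is, in substance, exactly the paper's first step: your self-testing identity $\mu^{\varepsilon,\Lambda}\kappa=2\mathcal{E}_\varepsilon(\zeta)+W\int_Dx_2\zeta\,dx+Q$ combined with the regionwise use of (H3) on $A_{\varepsilon,\Lambda}$ and $B_{\varepsilon,\Lambda}$ reproduces the paper's inequality \eqref{2-4}, and both arguments then hinge on the same quantity, the overflow $\int_{B_{\varepsilon,\Lambda}}\big(\psi-\partial_sJ(x_2,\Lambda)\big)\zeta\,dx=\int_D\zeta\psi\,dx-\int_D\zeta\,\partial_sJ(x_2,\varepsilon^2\zeta)\,dx$. (A minor slip: $\int_D\zeta\,dx=\kappa$ is not yet available at this point --- in Lemma \ref{le2.2} it requires $\mu^{\varepsilon,\Lambda}>0$, which the paper only extracts from Lemma \ref{le2.5}, downstream of the present lemma --- but this is harmless, since $\mu^{\varepsilon,\Lambda}\ge0$ and $\int_D\zeta\,dx\le\kappa$ give $\mu^{\varepsilon,\Lambda}\kappa\ge\mu^{\varepsilon,\Lambda}\int_D\zeta\,dx$ and your identity goes through with $\int_D\zeta\,dx$ in place of $\kappa$.)

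The genuine gap is in the overflow estimate, and it is twofold. First, writing $t_\Lambda(x_2):=\partial_sJ(x_2,\Lambda)$, your claim $-\Delta u=\zeta+O(1)$ needs $\Delta t_\Lambda\in L^\infty$. But in the setting of Theorem \ref{thm1} one has $\partial_sJ(x_2,\Lambda)=\partial_sJ_F\big(\Lambda/(1+\alpha x_2)\big)$, and $\partial_sJ_F$ is the \emph{inverse function} of $f$, which by (H1)--(H2) is merely continuous and strictly increasing: wherever $f$ flattens (e.g.\ $f(t)=c_k+(t-t_k)^3$ near points $t_k\uparrow\infty$, which is compatible with (H1)--(H4)), $\partial_sJ_F$ has unbounded derivative at $s=c_k$, and since such points may occur at arbitrarily large $s$, no choice of large $\Lambda$ avoids them; thus neither $\Delta t_\Lambda$ nor even $\nabla t_\Lambda$ is bounded, and your ``harmless $O(1)$'' term is not justified. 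Second, the maximum-principle/Talenti comparison must be run on the full open super-level set $\Omega=\{\psi>t_\Lambda\}\subseteq\Pi$, since that is where $u=(\psi-t_\Lambda)_+$ has zero boundary values; however, the measure bound $\le\kappa\varepsilon^2/\Lambda$ is only known for $\Omega\cap D$, because the characterization \eqref{2-1}, \eqref{2-3} holds only in $D$. At this stage nothing excludes a large portion of $\Omega$ lying in $\Pi\setminus D$ (the confinement of the set $\{\psi>0\}$ near the vortex is obtained only later, from Lemmas \ref{le2.5} and \ref{le2.8}, which depend on the present lemma), and without smallness of $|\Omega|$ the bound $\|u\|_{L^\infty}\le M|\Omega|/(4\pi)$ gives nothing. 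The paper's proof avoids both problems: testing with $\psi_+$ gives $\int_\Pi|\nabla\psi_+|^2\,dx=\int_D\zeta\psi\,dx$ as in \eqref{2-5}, and then H\"older plus the embedding $W^{1,1}\hookrightarrow L^2$ applied to $U=(\psi-\partial_sJ(x_2,\Lambda))_+$, with all integrals restricted to $\{\zeta=\Lambda\varepsilon^{-2}\}\subset D$ of measure $\le\kappa\varepsilon^2/\Lambda$, closes the self-improving inequality \eqref{2-6}; this touches only first derivatives of $\partial_sJ(\cdot,\Lambda)$ in integrated form and never leaves $D$. Your $L^\infty$ route could likely be repaired (e.g.\ replacing $t_\Lambda$ by a mollified majorant and controlling the exterior by a constant barrier), but as written these two steps are gaps, not details.
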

\begin{proof}
       For convenience, let us abbreviate $(\zeta^{\varepsilon,\Lambda},\psi^{\varepsilon,\Lambda}, \mu^{\varepsilon, \Lambda})$ to $(\zeta, \psi,\mu)$ here. Recall that from $(H3)'$
       \begin{equation*}
        J(x_2,s)\ge (1-\delta_0)\partial_sJ(x_2,s)s-\delta_1s, \ \ \ \forall~t>0, \ \forall~0<x_2\le 2r^*.
    \end{equation*}
    By \eqref{2-2} and $(H3)'$, we then have
    \begin{align*}
         2\mathcal{E}_\varepsilon(\zeta) &=\int_D{\zeta \mathcal{G}\zeta}dx-{{2W}}\int_{D}x_2\zeta dx-\frac{2}{\varepsilon^2}\int_D J(x_2,\varepsilon^2\zeta)dx\\
        &= \int_D{\zeta \left(\mathcal{G}\zeta-{W}x_2-\mu\right)}dx-\frac{2}{\varepsilon^2}\int_D J(x_2,\varepsilon^2\zeta)dx\\
        &\ \ \ \ \ \ \ \ \ \ \ \ -{W}\int_{D}x_2 \zeta dx+\mu\int_D\zeta dx \\
        & \le \int_D \zeta \psi dx-2(1-\delta_0)\int_D\partial_sJ(x_2,\varepsilon^2\zeta)\zeta dx+2\delta_1\int_D\zeta dx     \\
        &\ \ \ \ \ \ \ \ \ \ \ \ -{{W}}\int_{D}x_2 \zeta dx+\mu\int_D\zeta dx .\\
    \end{align*}
    Thus we get
    \begin{equation}\label{2-4}
        2\mathcal{E}_\varepsilon(\zeta)\le \int_D \zeta \psi dx-2(1-\delta_0)\int_D\partial_sJ(x_2,\varepsilon^2\zeta)\zeta d\nu-{{W}}\int_{D}x_2 \zeta d\nu +\mu\kappa+2\delta_1\kappa.
    \end{equation}
     If we take $\psi_+ \in H_{0}^{1}(\Pi)$ as a test function, we then obtain
    \begin{equation}\label{2-5}
        \int_\Pi {|\nabla \psi_+|^2} dx =\int_D \zeta \psi dx.
    \end{equation}
    Let $U:=\left(\psi-\partial_sJ(x_2,\Lambda)\right)_+$. Recalling \eqref{2-3}, by H\"older's inequality and Sobolev's inequality, we have
    \begin{align*}
         \int_D \zeta \psi dx & = \int_D \zeta \left(\psi-\partial_sJ(x_2,\Lambda)\right)_+ dx+\int_D\zeta \partial_sJ(x_2,\varepsilon^2\zeta)dx \\
        & \le  \frac{\Lambda}{\varepsilon^2}|\{\zeta=\Lambda \varepsilon^{-2}\}|^\frac{1}{2} \left(\int_D U^2 dx\right)^\frac{1}{2}+\int_D\zeta \partial_sJ(x_2,\varepsilon^2\zeta)dx \\
        & \le  \frac{C\Lambda}{\varepsilon^2}|\{\zeta=\Lambda \varepsilon^{-2}\}|^\frac{1}{2} \left(\int_D U^2 dx\right)^\frac{1}{2}+\int_D\zeta \partial_sJ(x_2,\varepsilon^2\zeta)dx\\
        & \le  \frac{C\Lambda}{\varepsilon^2}|\{\zeta=\Lambda \varepsilon^{-2}\}|^\frac{1}{2} \left(\int_D |\nabla U|dx+\int_D |U|dx\right)+\int_D\zeta \partial_sJ(x_2,\varepsilon^2\zeta)dx\\
        & \le  \frac{C\Lambda}{\varepsilon^2}|\{\zeta=\Lambda \varepsilon^{-2}\}|^\frac{1}{2} \left(\int_{\{\zeta=\Lambda\varepsilon^{-2}\}}|\nabla \psi|dx+\int_{\{\zeta=\Lambda\varepsilon^{-2}\}}|\nabla \partial_sJ(x_2,\Lambda)|dx\right)\\
        &  \ \ \ \ \ \ \ \  \ \ \ \ \ \ \ \ \ \ +\frac{C\Lambda}{\varepsilon^2}|\{\zeta=\Lambda \varepsilon^{-2}\}|^\frac{1}{2}\int_{\{\zeta=\Lambda\varepsilon^{-2}\}} \psi dx +\int_D\zeta \partial_sJ(x_2,\varepsilon^2\zeta)dx\\
        & \le  \frac{C\Lambda}{\varepsilon^2}|\{\zeta=\Lambda \varepsilon^{-2}\}| \left(\int_D {|\nabla \psi|^2} dx\right)^\frac{1}{2}+o_{\Lambda\varepsilon^{-2}}(1)\int_D \zeta \psi dx+o_{\Lambda\varepsilon^{-2}}(1)+\int_D\zeta \partial_sJ(x_2,\varepsilon^2\zeta)dx\\
        & \le C \left(\int_D {|\nabla \psi_+|^2} dx\right)^\frac{1}{2}+o_{\Lambda\varepsilon^{-2}}(1)\int_D \zeta \psi dx+o_{\Lambda\varepsilon^{-2}}(1)+\int_D\zeta \partial_sJ(x_2,\varepsilon^2\zeta)dx.
    \end{align*}
     Combining \eqref{2-5} and above estimate, we get
    \begin{equation*}
        \int_D \zeta \psi dx\le C \left(\int_D \zeta \psi dx\right)^\frac{1}{2}+o_{\Lambda\varepsilon^{-2}}(1)\int_D \zeta \psi dx+o_{\Lambda\varepsilon^{-2}}(1)+\int_D\zeta \partial_sJ(x_2,\varepsilon^2\zeta)dx,
    \end{equation*}
    which implies
    \begin{equation}\label{2-6}
        \int_D \zeta \psi dx\le (C+o_{\Lambda\varepsilon^{-2}}(1))\left(1+\left(\int_D\zeta \partial_sJ(x_2,\varepsilon^2\zeta)dx\right)^\frac{1}{2}\right)+\int_D\zeta \partial_sJ(x_2,\varepsilon^2\zeta)dx.
    \end{equation}
Combining \eqref{2-4} and \eqref{2-6}, we obtain
\begin{equation*}
     \begin{split}
        2\mathcal{E}_\varepsilon(\zeta)\le & (C+o_{\Lambda\varepsilon^{-2}}(1))\left(1+\left(\int_D\zeta \partial_sJ(x_2,\varepsilon^2\zeta)dx\right)^\frac{1}{2}\right) +|2\delta_0-1|\int_D\partial_sJ(x_2,\varepsilon^2\zeta)\zeta dx \\
        &-{{W}}\int_{D}x_2 \zeta dx +\mu\kappa+2\delta_1\kappa,
    \end{split}
\end{equation*}
which clearly implies the desired result.
The proof is thus complete.
\end{proof}

 Combining Lemmas \ref{le2.3} and \ref{le2.4}, we immediately get the following estimate.
\begin{lemma}\label{le2.5}
   \begin{equation*}
       \begin{split}
           \mu^{\varepsilon,\Lambda} & \ge \frac{\kappa}{2\pi} \log{\frac{1}{\varepsilon}} +\frac{W}{\kappa}\int_Dx_2 \zeta^{\varepsilon, \Lambda}  dx-|2\delta_0-1|\kappa^{-1}\int_D\partial_sJ(x_2,\Lambda)\zeta^{\varepsilon,\Lambda}dx \\
           &\ \ \ \ \  -(C+o_{\Lambda\varepsilon^{-2}}(1))\left(1+\left(\int_D \partial_sJ(x_2,\Lambda)\zeta^{\varepsilon,\Lambda}dx\right)^\frac{1}{2}\right) .
       \end{split}
   \end{equation*}
\end{lemma}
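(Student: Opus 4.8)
The plan is to simply chain together the two preceding lemmas, since Lemma \ref{le2.5} differs from Lemma \ref{le2.4} only in its leading term. First I would write down the bound furnished by Lemma \ref{le2.4},
\begin{equation*}
    \mu^{\varepsilon,\Lambda} \ge \frac{2\mathcal{E}_\varepsilon(\zeta^{\varepsilon, \Lambda})}{\kappa} + \frac{W}{\kappa}\int_D x_2 \zeta^{\varepsilon, \Lambda}\,dx - |2\delta_0-1|\kappa^{-1}\int_D\partial_sJ(x_2,\Lambda)\zeta^{\varepsilon,\Lambda}\,dx - (C+o_{\Lambda\varepsilon^{-2}}(1))\Big(1+\big(\textstyle\int_D \partial_sJ(x_2,\Lambda)\zeta^{\varepsilon,\Lambda}\,dx\big)^{\frac{1}{2}}\Big),
\end{equation*}
noting that every term on the right other than $\tfrac{2}{\kappa}\mathcal{E}_\varepsilon(\zeta^{\varepsilon,\Lambda})$ already matches, verbatim, the corresponding term in the statement of Lemma \ref{le2.5}. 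Hence the only thing to do is to replace the energy term by an explicit logarithmic lower bound.

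Next I would insert the energy estimate from Lemma \ref{le2.3}, namely $\mathcal{E}_\varepsilon(\zeta^{\varepsilon,\Lambda}) \ge \tfrac{\kappa^2}{4\pi}\log\tfrac{1}{\varepsilon} - C$. Since $\kappa>0$, multiplying by the positive factor $2/\kappa$ preserves the inequality and gives
\begin{equation*}
    \frac{2\mathcal{E}_\varepsilon(\zeta^{\varepsilon,\Lambda})}{\kappa} \ge \frac{\kappa}{2\pi}\log\frac{1}{\varepsilon} - \frac{2C}{\kappa}.
\end{equation*}
The spurious additive constant $-2C/\kappa$ is harmless: because the factor $1 + \big(\int_D \partial_s J(x_2,\Lambda)\zeta^{\varepsilon,\Lambda}\,dx\big)^{1/2}$ in the last term is bounded below by $1$, the quantity $2C/\kappa$ can be absorbed into the constant $C$ multiplying that factor. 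Relabelling the resulting constant again as $C$ yields precisely the claimed inequality.

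The only point requiring attention is purely bookkeeping: one must verify that the three lower-order terms, the drift integral $\tfrac{W}{\kappa}\int_D x_2 \zeta^{\varepsilon,\Lambda}\,dx$, the term $-|2\delta_0-1|\kappa^{-1}\int_D \partial_s J(x_2,\Lambda)\zeta^{\varepsilon,\Lambda}\,dx$, and the final error term, are transcribed identically from Lemma \ref{le2.4}, so that no further estimation is needed. I do not expect any genuine analytic obstacle here; the substantive work was already done in establishing the energy lower bound (Lemma \ref{le2.3}) and the multiplier estimate (Lemma \ref{le2.4}), and Lemma \ref{le2.5} is merely their combination into a single self-contained bound that will be convenient for the subsequent asymptotic analysis of $\mu^{\varepsilon,\Lambda}$ and the support of $\zeta^{\varepsilon,\Lambda}$.
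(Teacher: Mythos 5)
Your proposal is correct and is exactly the paper's argument: the paper itself derives Lemma \ref{le2.5} by the single remark ``Combining Lemmas \ref{le2.3} and \ref{le2.4}, we immediately get the following estimate,'' i.e.\ substituting the energy lower bound $\mathcal{E}_\varepsilon(\zeta^{\varepsilon,\Lambda})\ge \frac{\kappa^2}{4\pi}\log\frac{1}{\varepsilon}-C$ into the multiplier estimate and absorbing the constant. Your bookkeeping (multiplying by $2/\kappa$ and relabelling $C$) is the correct and complete way to make that one-line combination explicit.
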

It is easy to see that $\partial_sJ(x_2,\Lambda)\le \partial_sJ(2r^*,\Lambda)$ for $x_2\le 2r^*$. Hence $\int_D \partial_sJ(x_2,\Lambda)\zeta^{\varepsilon,\Lambda}dx\le C^*$. As a consequence of Lemmas \ref{le2.2}, \ref{le2.3} and \ref{le2.5}, we have

\begin{corollary}\label{le2.6}
    For each fixed $\Lambda$, there holds $\int_D\zeta^{\varepsilon, \Lambda} dx=\kappa$ when $\varepsilon$ is sufficiently small.
\end{corollary}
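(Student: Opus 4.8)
The plan is to read off the conclusion directly from the criterion furnished by Lemma \ref{le2.2}. That lemma asserts that $\int_D \zeta^{\varepsilon,\Lambda}\,dx = \kappa$ holds as soon as two positivity conditions are met: $\mathcal{E}_\varepsilon(\zeta^{\varepsilon,\Lambda}) > 0$, and every Lagrange multiplier $\mu^{\varepsilon,\Lambda}$ satisfying the Euler--Lagrange relation \eqref{2-1} is strictly positive. Thus the entire task reduces to verifying these two inequalities for $\varepsilon$ sufficiently small (with $\Lambda$ fixed), and both follow immediately from the a priori bounds already established.

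First I would dispose of the energy positivity. By Lemma \ref{le2.3} we have
\begin{equation*}
    \mathcal{E}_\varepsilon(\zeta^{\varepsilon,\Lambda}) \ge \frac{\kappa^2}{4\pi}\log\frac{1}{\varepsilon} - C,
\end{equation*}
and since $\log\tfrac{1}{\varepsilon} \to +\infty$ as $\varepsilon \to 0^+$ while $C$ is independent of $\varepsilon$, the right-hand side is strictly positive once $\varepsilon$ is small enough. This settles the first condition.

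For the positivity of $\mu^{\varepsilon,\Lambda}$, I would invoke the lower bound of Lemma \ref{le2.5} and argue that its leading logarithmic term dominates all remaining contributions. The key input, supplied by the monotonicity estimate $\partial_s J(x_2,\Lambda) \le \partial_s J(2r^*,\Lambda)$ on $D$ together with the constraint $\int_D \zeta^{\varepsilon,\Lambda}\,dx \le \kappa$, is that $\int_D \partial_s J(x_2,\Lambda)\zeta^{\varepsilon,\Lambda}\,dx \le C^*$ uniformly in $\varepsilon$. Consequently, in the bound
\begin{equation*}
    \mu^{\varepsilon,\Lambda} \ge \frac{\kappa}{2\pi}\log\frac{1}{\varepsilon} + \frac{W}{\kappa}\int_D x_2\zeta^{\varepsilon,\Lambda}\,dx - |2\delta_0-1|\kappa^{-1}\int_D \partial_s J(x_2,\Lambda)\zeta^{\varepsilon,\Lambda}\,dx - (C + o_{\Lambda\varepsilon^{-2}}(1))\Big(1 + \big(\textstyle\int_D \partial_s J(x_2,\Lambda)\zeta^{\varepsilon,\Lambda}\,dx\big)^{1/2}\Big),
\end{equation*}
the buoyancy term $\frac{W}{\kappa}\int_D x_2\zeta^{\varepsilon,\Lambda}\,dx$ is nonnegative, the third term is bounded below by $-|2\delta_0-1|\kappa^{-1}C^*$, and the last term is bounded below by $-(C+o(1))(1+(C^*)^{1/2})$; moreover $o_{\Lambda\varepsilon^{-2}}(1) \to 0$ as $\varepsilon \to 0$ for fixed $\Lambda$, so this error factor stays bounded. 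Collecting these, one obtains $\mu^{\varepsilon,\Lambda} \ge \frac{\kappa}{2\pi}\log\frac{1}{\varepsilon} - C''$ with $C''$ independent of $\varepsilon$, which is strictly positive for $\varepsilon$ small.

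With both conditions verified, Lemma \ref{le2.2} yields $\int_D \zeta^{\varepsilon,\Lambda}\,dx = \kappa$ for all sufficiently small $\varepsilon$, completing the proof. There is no genuine analytic obstacle here: the statement is a bookkeeping consequence of the preceding lemmas, and the only point requiring care is to confirm that every error term in Lemma \ref{le2.5} remains bounded (uniformly in $\varepsilon$ for fixed $\Lambda$) so that the divergent term $\frac{\kappa}{2\pi}\log\frac{1}{\varepsilon}$ forces $\mu^{\varepsilon,\Lambda}$ to be positive; this boundedness rests entirely on the uniform estimate $\int_D \partial_s J(x_2,\Lambda)\zeta^{\varepsilon,\Lambda}\,dx \le C^*$ noted just above the corollary.
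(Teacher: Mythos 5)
Your proof is correct and takes essentially the same route as the paper: the corollary is stated there precisely as a consequence of Lemma \ref{le2.2}'s criterion combined with the energy lower bound of Lemma \ref{le2.3} and the multiplier lower bound of Lemma \ref{le2.5}, together with the uniform estimate $\int_D \partial_s J(x_2,\Lambda)\zeta^{\varepsilon,\Lambda}\,dx \le C^*$. You have merely written out explicitly the bookkeeping that the paper leaves implicit.
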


The following lemma shows that $\psi^{\varepsilon, \Lambda}$ has a priori upper bound with respect to $\Lambda$.
\begin{lemma}\label{le11}
    Let $\Lambda$ be fixed. Then for all sufficiently small $\varepsilon$, we have
    \begin{align*}
        \psi^{\varepsilon, \Lambda}(x) \le  &\ |2\delta_0-1|\kappa^{-1}\int_D\partial_sJ(x_2,\Lambda)\zeta^{\varepsilon,\Lambda}d\nu \\
        & +(C+o_{\Lambda\varepsilon^{-2}}(1))\left(1+\left(\int_D \partial_sJ(x_2,\Lambda)\zeta^{\varepsilon,\Lambda}d\nu\right)^\frac{1}{2}\right)+C\log \Lambda,\ \ \ \ a.e.\  \text{in}\  D.
    \end{align*}

\end{lemma}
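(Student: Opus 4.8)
The plan is to estimate $\psi^{\varepsilon,\Lambda}=\mathcal{G}\zeta^{\varepsilon,\Lambda}-Wx_2-\mu^{\varepsilon,\Lambda}$ from above by controlling $\mathcal{G}\zeta^{\varepsilon,\Lambda}$ from above and then inserting the lower bound for $\mu^{\varepsilon,\Lambda}$ already proved in Lemma~\ref{le2.5}. The decisive point is that the leading singular term $\tfrac{\kappa}{2\pi}\log\tfrac1\varepsilon$ appears with the \emph{same} coefficient in the upper bound for $\mathcal{G}\zeta^{\varepsilon,\Lambda}$ and in the lower bound for $\mu^{\varepsilon,\Lambda}$, so that the two cancel and leave only quantities that are bounded in $\varepsilon$.

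First I would bound $\mathcal{G}\zeta^{\varepsilon,\Lambda}$ pointwise. Writing $\bar{x}=(x_1,-x_2)$, for $x,y\in D$ one has $|\bar{x}-y|\ge r^*>0$ and $|\bar{x}-y|$ bounded above, so the Green kernel splits as $G(x,y)=\tfrac{1}{2\pi}\log|\bar{x}-y|+\tfrac{1}{2\pi}\log\tfrac{1}{|x-y|}\le \tfrac{1}{2\pi}\log\tfrac{1}{|x-y|}+C$, the reflected part being harmless. Hence, using $\int_D\zeta^{\varepsilon,\Lambda}\,dy=\kappa$,
\[
\mathcal{G}\zeta^{\varepsilon,\Lambda}(x)\le \frac{1}{2\pi}\int_D\log\frac{1}{|x-y|}\,\zeta^{\varepsilon,\Lambda}(y)\,dy+C.
\]
Since $0\le \zeta^{\varepsilon,\Lambda}\le \Lambda\varepsilon^{-2}$ with total mass $\kappa$, the bathtub principle (\cite{Lieb}, \S1.14) bounds the right-hand integral by its value on the extremal profile $\Lambda\varepsilon^{-2}\chi_{B_r(x)}$, where $\pi r^2\Lambda\varepsilon^{-2}=\kappa$, i.e. $r=\varepsilon\sqrt{\kappa/(\pi\Lambda)}<1$ for $\varepsilon$ small; an explicit computation of $\int_{B_r(x)}\log\frac{1}{|x-y|}\,dy$ then yields
\[
\mathcal{G}\zeta^{\varepsilon,\Lambda}(x)\le \frac{\kappa}{2\pi}\log\frac{1}{\varepsilon}+C\log\Lambda+C,\qquad x\in D.
\]

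Next I would insert the lower bound for $\mu^{\varepsilon,\Lambda}$ from Lemma~\ref{le2.5} into $\psi^{\varepsilon,\Lambda}=\mathcal{G}\zeta^{\varepsilon,\Lambda}-Wx_2-\mu^{\varepsilon,\Lambda}$. The terms $\tfrac{\kappa}{2\pi}\log\tfrac1\varepsilon$ cancel exactly, while the contributions $-Wx_2\le 0$ (valid for $x\in D$) and $-\tfrac{W}{\kappa}\int_D x_2\zeta^{\varepsilon,\Lambda}\,dx\le 0$ are simply discarded to preserve the upper bound. What remains is precisely the right-hand side of the asserted inequality, the leftover additive constant being absorbed into the $(C+o_{\Lambda\varepsilon^{-2}}(1))$ factor. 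Since $\psi^{\varepsilon,\Lambda}\in C^1_{\mathrm{loc}}(\Pi)$ by the regularity of $\mathcal{G}\zeta^{\varepsilon,\Lambda}$, the estimate in fact holds at every point of $D$, hence a.e.

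I expect the main obstacle to be the first step: one must decompose the Green's function so that the reflected part $\tfrac{1}{2\pi}\log|\bar{x}-y|$ is genuinely absorbed into $C$, and then extract the sharp constant $\tfrac{\kappa}{2\pi}$ in front of $\log\tfrac1\varepsilon$ from the bathtub estimate \emph{exactly} matching the coefficient in Lemma~\ref{le2.5}; any mismatch here would leave an unbounded residual and the lemma would fail. By contrast, the $\Lambda$-dependence, which surfaces as the $C\log\Lambda$ term, comes solely from the radius $r\sim\varepsilon\Lambda^{-1/2}$ of the extremal ball and is otherwise routine.
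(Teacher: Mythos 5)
Your proposal is correct and follows essentially the same route as the paper's proof: the identical splitting of the Green function (reflected part bounded on $D\times D$), the same bathtub/rearrangement estimate giving $\mathcal{G}\zeta^{\varepsilon,\Lambda}(x)\le \frac{\kappa}{2\pi}\log\frac{1}{\varepsilon}+C\log\Lambda+C$ via the extremal profile $\Lambda\varepsilon^{-2}\chi_{B_r(x)}$ with $r=\varepsilon\sqrt{\kappa/(\pi\Lambda)}$, and the same cancellation of the $\frac{\kappa}{2\pi}\log\frac{1}{\varepsilon}$ terms against the lower bound for $\mu^{\varepsilon,\Lambda}$ from Lemma \ref{le2.5}, discarding the nonpositive $-Wx_2$ and $-\frac{W}{\kappa}\int_D x_2\zeta^{\varepsilon,\Lambda}\,dx$ contributions. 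The only (immaterial) difference is that the paper writes the estimate for $x\in supp(\zeta^{\varepsilon,\Lambda})$, whereas you correctly observe that the argument yields it at every point of $D$.
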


\begin{proof}
    For any $x=(x_1,x_2)\in supp\,(\zeta^{\varepsilon,\Lambda})$, by Lemma \ref{le2.2} , we have 
    \begin{equation*}
        \begin{split}
              \psi^{\varepsilon,\Lambda}(x)& =\mathcal{G}\zeta^{\varepsilon,\Lambda}(x)-{Wx_2}-\mu^{\varepsilon,\Lambda} \\
            & = \int_D G(x,y)\zeta^{\varepsilon,\Lambda}(y)dy-Wx_2-\mu^{\varepsilon,\Lambda} \\
            & \le \frac{1}{2\pi}\int_D\log{\frac{1}{|x-y|}}\zeta^{\varepsilon,\Lambda}(y)dy-Wx_2-\mu^{\varepsilon,\Lambda}+C\\
            & \le  \frac{1}{2\pi} \frac{\varLambda}{\varepsilon^2}\int_{B_{\varepsilon\sqrt{\kappa/\varLambda\pi}}(x)} \log\frac{1}{|x-y|}\zeta^{\varepsilon,\varLambda}(y)dy -Wx_2-\mu^{\varepsilon,\Lambda}+C\\
            &\le \frac{\kappa}{2\pi}\log\frac{1}{\varepsilon}+\frac{\kappa}{4\pi}\log {\varLambda}-Wx_2-\mu^{\varepsilon,\Lambda}+C\\
            &\le \frac{\kappa}{2\pi}\log\frac{1}{\varepsilon}-\mu^{\varepsilon,\Lambda}+C(1+\log {\varLambda}),\\
        \end{split}
    \end{equation*}
    where the positive number $C$ does not depend on $\varepsilon$ and $\Lambda$.
    
 On the other hand, by Lemma \ref{le2.5} we have
    \begin{equation*}
        \begin{split}
            \mu^{\varepsilon,\Lambda} & \ge \frac{\kappa}{2\pi} \log{\frac{1}{\varepsilon}} +\frac{W}{\kappa}\int_Dx_2 \zeta^{\varepsilon, \Lambda}  dx-|2\delta_0-1|\kappa^{-1}\int_D\partial_sJ(x_2,\Lambda)\zeta^{\varepsilon,\Lambda}dx \\
            &\ \ \ \ \  -(C+o_{\Lambda\varepsilon^{-2}}(1))\left(1+\left(\int_D \partial_sJ(x_2,\Lambda)\zeta^{\varepsilon,\Lambda}dx\right)^\frac{1}{2}\right) .
        \end{split}
    \end{equation*}
    Thus we have
    \begin{equation*}
        \begin{split}
            \psi^{\varepsilon, \Lambda}(x) \le  &\ |2\delta_0-1|\kappa^{-1}\int_D\partial_sJ(x_2,\Lambda)\zeta^{\varepsilon,\Lambda}dx \\
            & +(C+o_{\Lambda\varepsilon^{-2}}(1))\left(1+\left(\int_D \partial_sJ(x_2,\Lambda)\zeta^{\varepsilon,\Lambda}dx\right)^\frac{1}{2}\right)+C\log \Lambda,
        \end{split}
    \end{equation*}
    and the proof is thus complete.
\end{proof}

Now we can choose a suitable $\Lambda$ such that the patch part in \eqref{2-1} will vanish when $\varepsilon$ is suffciently small.
\begin{lemma}\label{le12}
    There exists $\Lambda_0>\max\{1,f(0^+)\}$ such that
    \begin{equation*}
        \zeta^{\varepsilon, \Lambda_0}=\frac{1}{ \varepsilon^2}i(x_2,\psi^{\varepsilon, \Lambda_0}),\ \ \ \ \ a.e.\  \text{in}\  D,
    \end{equation*}
    provided $\varepsilon>0$ is sufficiently small.
\end{lemma}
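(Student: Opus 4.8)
The plan is to show that for a suitably fixed $\Lambda_0$ the patch set $B_{\varepsilon,\Lambda_0}=\{\psi^{\varepsilon,\Lambda_0}\ge \partial_sJ(x_2,\Lambda_0)\}$ is null once $\varepsilon$ is small; by the representation \eqref{2-1} of Lemma \ref{le2.2} this removes the term $\frac{\Lambda_0}{\varepsilon^2}\chi_{B_{\varepsilon,\Lambda_0}}$ and leaves exactly $\zeta^{\varepsilon,\Lambda_0}=\frac{1}{\varepsilon^2}i(x_2,\psi^{\varepsilon,\Lambda_0})$. Thus everything reduces to the pointwise comparison $\psi^{\varepsilon,\Lambda_0}(x)<\partial_sJ(x_2,\Lambda_0)$ at a.e.\ point of $D$ (off the support $\psi\le 0$ and nothing must be checked). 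I would argue by contradiction: if $B_{\varepsilon,\Lambda_0}$ had positive measure, then at a point of it one has $\partial_sJ(x_2,\Lambda_0)\le \psi^{\varepsilon,\Lambda_0}(x)$.

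First I would feed in the a priori upper bound of Lemma \ref{le11}. Using Corollary \ref{le2.6} (so $\int_D\zeta^{\varepsilon,\Lambda}dx=\kappa$) together with the boundedness of $\partial_sJ(\cdot,\Lambda)$ on the fixed strip $r^*/2\le x_2\le 2r^*$, the weighted integral appearing there is controlled by $\kappa P(\Lambda)$, where $P(\Lambda):=\sup_{r^*/2\le x_2\le 2r^*}\partial_sJ(x_2,\Lambda)$. Letting $\varepsilon\to0^+$ so that $o_{\Lambda\varepsilon^{-2}}(1)\to0$, this yields, uniformly in $x\in D$,
\[
\psi^{\varepsilon,\Lambda}(x)\le |2\delta_0-1|\,P(\Lambda)+C\big(1+(\kappa P(\Lambda))^{1/2}\big)+C\log\Lambda+o(1).
\]

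The decisive structural gain is that $\delta_0\in(0,1)$, so $|2\delta_0-1|<1$: the leading term above carries a coefficient strictly below one, which is precisely what the improved functional buys us. Since (H2)--(H3) force $i(x_2,\cdot)\to+\infty$, both $P(\Lambda)$ and $p(\Lambda):=\inf_{r^*/2\le x_2\le2r^*}\partial_sJ(x_2,\Lambda)$ tend to $+\infty$ as $\Lambda\to+\infty$. I would then invoke (H4) in the form $\log\Lambda=\log i\big(x_2,\partial_sJ(x_2,\Lambda)\big)=o\big(\partial_sJ(x_2,\Lambda)\big)$, together with $P(\Lambda)^{1/2}=o(p(\Lambda))$, to absorb the logarithmic and square-root corrections. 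Since the threshold satisfies $\partial_sJ(x_2,\Lambda)\ge p(\Lambda)$ on the strip, it remains to secure $|2\delta_0-1|P(\Lambda)<p(\Lambda)$ for $\Lambda$ large; fixing $\Lambda_0$ to realize this and then taking $\varepsilon$ small contradicts $\partial_sJ(x_2,\Lambda_0)\le\psi^{\varepsilon,\Lambda_0}(x)$ on $B_{\varepsilon,\Lambda_0}$, forcing that set to be null and completing the argument.

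The main obstacle is exactly this last comparison, because $\partial_sJ(x_2,\Lambda)$ genuinely depends on $x_2$ and the strip $D$ is not thin, so the integral is controlled by the maximum $P(\Lambda)$ while the threshold can be as small as $p(\Lambda)$. One must therefore verify that the ratio $P(\Lambda)/p(\Lambda)$ stays compatible with the slack $1/|2\delta_0-1|>1$. In the case $g=\alpha f$ of Theorem \ref{thm1} this is explicit, since $\partial_sJ(x_2,\Lambda)=f^{-1}\!\big(\Lambda/(1+\alpha x_2)\big)$, and (H4) guarantees the lower-order terms do not interfere; managing this $x_2$-uniformity is the only delicate step, the remainder being bookkeeping in $\varepsilon$.
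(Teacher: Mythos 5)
Your strategy is the same as the paper's: by the representation \eqref{2-1} of Lemma \ref{le2.2}, reduce to showing the patch set $B_{\varepsilon,\Lambda}$ is null; on that set play the threshold bound $\psi^{\varepsilon,\Lambda}\ge \partial_s J(x_2,\Lambda)$ against the a priori upper bound of Lemma \ref{le11} (with $\int_D\zeta^{\varepsilon,\Lambda}\,dx=\kappa$ from Corollary \ref{le2.6}); exploit that the leading coefficient $|2\delta_0-1|$ is strictly below $1$ and that (H4) makes $\partial_s J(\cdot,\Lambda)$ dominate $\log\Lambda$ and the square-root correction; then fix $\Lambda_0$ and let $\varepsilon$ be small. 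This is precisely the paper's chain \eqref{2-8}--\eqref{2-10}.

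The difference is that you keep the $x_2$-dependence of the threshold honest, via $P(\Lambda)=\sup_D\partial_sJ(\cdot,\Lambda)$ and $p(\Lambda)=\inf_D\partial_sJ(\cdot,\Lambda)$, and you correctly isolate the crux: one needs $|2\delta_0-1|\,P(\Lambda)<p(\Lambda)$ (up to lower-order terms) for large $\Lambda$. But you never prove this, and it does not follow from (H1)--(H4): take $f(s)=s_+^{1/2}$, $g=\alpha f$. Then (H3) forces $\delta_0\ge 2/3$, so the available slack is $1/|2\delta_0-1|\le 3$, while $\partial_sJ(x_2,\Lambda)=\Lambda^2/(1+\alpha x_2)^2$ gives $P(\Lambda)/p(\Lambda)=\big((1+2\alpha r^*)/(1+\alpha r^*/2)\big)^2$, which already exceeds $3$ once $\alpha r^*\gtrsim 0.7$ (and tends to $16$); this regime is admissible since for $\alpha\ge 2$ the hypothesis on $W$ is vacuous and $r^*\approx \kappa/(8\pi W)$ can be made large relative to $1/\alpha$. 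So the contradiction cannot be closed by these estimates at exactly the step you defer as ``explicit'': that is a genuine gap. In fairness, the paper's own proof conceals the same defect in \eqref{2-8}, where $\partial_sJ(x_2,\Lambda)=\partial_sJ(r^*,\Lambda)+o_{\Lambda\varepsilon^{-2}}(1)$ is asserted on the patch set even though no concentration of $\operatorname{supp}(\zeta^{\varepsilon,\Lambda})$ in the $x_2$-direction is available at that stage (it is proved only later, in Lemma \ref{le2.9}, which relies on the present lemma), and the discrepancy $\partial_sJ(x_2,\Lambda)-\partial_sJ(r^*,\Lambda)$ is a fixed quantity, not small in $\varepsilon$. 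Closing the argument---yours or the paper's---appears to require either shrinking $D$ in the $x_2$-direction or an additional assumption controlling the ratio of $\sup$ to $\inf$ of $\partial_sJ(\cdot,\Lambda)$ over the strip against $1/|2\delta_0-1|$; neither is supplied.
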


\begin{proof}
    Notice that on the patch part $\{\zeta^{\varepsilon, \Lambda}={\Lambda\varepsilon^{-2}}\}$, we have
    \begin{equation}\label{2-8}
        \psi^{\varepsilon, \Lambda}(x_1,x_2)\ge \partial_s J(x_2,\Lambda)= \partial_s J(r^*,\Lambda)+o_{\Lambda\varepsilon^{-2}}(1).
    \end{equation}
    On the other hand, by Lemma \ref{le11}, we have
    \begin{equation}\label{2-9}
        \begin{split}
            \psi^{\varepsilon,\Lambda}& \le |2\delta_0-1|\kappa^{-1}\int_D\partial_sJ(x_2,\Lambda)\zeta^{\varepsilon,\Lambda}dx\\
            & \ \ \ \ \ \ +(C+o_{\Lambda\varepsilon^{-2}}(1))\left(1+\left(\int_D \partial_sJ(x_2,\Lambda)\zeta^{\varepsilon,\Lambda}dx\right)^\frac{1}{2}\right)+C\log \Lambda \\
            & \le |2\delta_0-1|\partial_s J(r^*,\Lambda) +(C+o_{\Lambda\varepsilon^{-2}}(1))\left(1+\left(\partial_sJ(r^*,\Lambda)\right)^\frac{1}{2}\right)+C\log\Lambda, \ \ a.e.\  \text{in}\  D.
        \end{split}
    \end{equation}
    By assumption $(H4)$, we see that for all $\tau>0$
    \begin{equation*}
        \lim_{s\to+\infty}\left(\partial_s J(r^*,s)-\tau\log s \right)=+\infty.
    \end{equation*}
    Hence we can choose $\Lambda_0>\max\{1,f(0^+)\}$ such that
    \begin{equation}\label{2-10}
        (1-|2\delta_0-1|)\partial_s J(r^*,\Lambda_0)-(C+1)\left(1+\left(\partial_sJ(r^*,\Lambda_0)\right)^\frac{1}{2}\right)-C\log\Lambda_0>1.
    \end{equation}
    From \eqref{2-8},\eqref{2-9} and \eqref{2-10}, we conclude that $| \{\zeta^{\varepsilon, \Lambda_0}={\Lambda_0\varepsilon^{-2}}\}|=0$ when $\varepsilon$ is sufficiently small, which completes the proof.
\end{proof}

For simplicity, we will abbreviate $(\zeta^{\varepsilon,\Lambda_{0}},\psi^{\varepsilon,\Lambda_{0}}, \mu^{\varepsilon, \Lambda_{0}})$ to $(\zeta^{\varepsilon}, \psi^{\varepsilon},\mu^{\varepsilon})$ in the sequel.

\begin{lemma}\label{lem2-9}
    The following asymptotic expansions hold as $\varepsilon\to 0^+$:
    \begin{align}
        \label{2-11}  \mathcal{E}_\varepsilon(\zeta^\varepsilon) & =\frac{\kappa^2}{4\pi}\ln\frac{1}{\varepsilon}+O(1), \\
        \label{2-12}  \mu^\varepsilon & =\frac{\kappa}{2\pi}\ln\frac{1}{\varepsilon}+O(1).
    \end{align}
\end{lemma}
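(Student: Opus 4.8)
The plan is to prove each expansion by matching a lower and an upper bound, where the lower bounds are essentially already recorded in the earlier lemmas and both upper bounds follow from a single pointwise estimate on the Green potential. For \eqref{2-11} the lower bound $\mathcal{E}_\varepsilon(\zeta^\varepsilon)\ge\frac{\kappa^2}{4\pi}\log\frac1\varepsilon-C$ is exactly Lemma \ref{le2.3}. For the matching upper bound I would use the constraints defining $\mathcal{A}_{\varepsilon,\Lambda_0}$, namely $0\le\zeta^\varepsilon\le\Lambda_0\varepsilon^{-2}$ together with $\int_D\zeta^\varepsilon\,dx=\kappa$ from Corollary \ref{le2.6}. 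The rearrangement (bathtub) computation already carried out inside the proof of Lemma \ref{le11} yields, for $x\in supp(\zeta^\varepsilon)$,
\[
\mathcal{G}\zeta^\varepsilon(x)\le \frac{\kappa}{2\pi}\log\frac1\varepsilon+\frac{\kappa}{4\pi}\log\Lambda_0+C\le\frac{\kappa}{2\pi}\log\frac1\varepsilon+C,
\]
since $\Lambda_0$ is now a fixed constant and $C$ is independent of $\varepsilon$. Multiplying by $\zeta^\varepsilon$ and integrating over $D$ gives $\tfrac12\int_D\zeta^\varepsilon\mathcal{G}\zeta^\varepsilon\,dx\le\frac{\kappa^2}{4\pi}\log\frac1\varepsilon+C$. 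The remaining two terms of $\mathcal{E}_\varepsilon$ are nonpositive: since $x_2>0$ on $D$ we have $-W\int_D x_2\zeta^\varepsilon\,dx\le0$, and $J\ge0$ gives $-\varepsilon^{-2}\int_D J(x_2,\varepsilon^2\zeta^\varepsilon)\,dx\le0$. Hence $\mathcal{E}_\varepsilon(\zeta^\varepsilon)\le\frac{\kappa^2}{4\pi}\log\frac1\varepsilon+C$, which combined with Lemma \ref{le2.3} establishes \eqref{2-11}.

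For the lower bound in \eqref{2-12} I would simply read off Lemma \ref{le2.5} with $\Lambda=\Lambda_0$ fixed. The bound $\int_D\partial_sJ(x_2,\Lambda_0)\zeta^\varepsilon\,dx\le C^*$ recorded after that lemma renders every correction term $O(1)$, while $\frac{W}{\kappa}\int_D x_2\zeta^\varepsilon\,dx\ge0$, so that
\[
\mu^\varepsilon\ge\frac{\kappa}{2\pi}\log\frac1\varepsilon-C .
\]

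For the matching upper bound in \eqref{2-12} I would pass through the support. After Lemma \ref{le12} the patch part has disappeared, so $\zeta^\varepsilon=\varepsilon^{-2}i(x_2,\psi^\varepsilon)$ in $D$; consequently at any point $x^*=(x_1^*,x_2^*)\in supp(\zeta^\varepsilon)$ one has $i(x_2^*,\psi^\varepsilon(x^*))>0$, which forces $\psi^\varepsilon(x^*)>0$ by (H2). Evaluating the identity \eqref{2-2} at such an $x^*$ and using $x_2^*>0$, $\psi^\varepsilon(x^*)>0$, together with the pointwise Green-potential bound above, gives
\[
\mu^\varepsilon=\mathcal{G}\zeta^\varepsilon(x^*)-Wx_2^*-\psi^\varepsilon(x^*)\le \mathcal{G}\zeta^\varepsilon(x^*)\le\frac{\kappa}{2\pi}\log\frac1\varepsilon+C .
\]
Together with the lower bound this yields \eqref{2-12}.

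The only genuinely substantial ingredient is the pointwise bound on $\mathcal{G}\zeta^\varepsilon$, and this is not fresh work: it is precisely the rearrangement estimate executed in the proof of Lemma \ref{le11}, whose conclusion I reuse with $\Lambda=\Lambda_0$. Thus the content of this lemma is largely organizational, assembling the one-sided bounds from Lemmas \ref{le2.3}, \ref{le2.5} and \ref{le11} into two-sided $O(1)$ statements. The only point requiring care is uniformity in $\varepsilon$ after $\Lambda_0$ has been frozen, which holds because $\partial_sJ(x_2,\Lambda_0)$ is bounded on $D$ and the factors $o_{\Lambda\varepsilon^{-2}}(1)$ tend to $0$ as $\varepsilon\to0^+$.
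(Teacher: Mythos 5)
Your proposal is correct and follows essentially the same route as the paper: the lower bounds are read off from Lemmas \ref{le2.3} and \ref{le2.5} (with $\Lambda=\Lambda_0$ fixed, so that the correction terms are $O(1)$ by the bound $\int_D\partial_sJ(x_2,\Lambda_0)\zeta^{\varepsilon}\,dx\le C^*$), and the upper bounds come from the rearrangement estimate on the logarithmic potential. The only cosmetic difference is that the paper invokes Riesz's rearrangement inequality for the upper bound in \eqref{2-11} and obtains \eqref{2-12} by ``the same argument as Lemma \ref{le2.5}'', whereas you reuse the pointwise bathtub bound from the proof of Lemma \ref{le11} and get the upper bound on $\mu^{\varepsilon}$ by evaluating \eqref{2-2} at a point of $\{\zeta^{\varepsilon}>0\}$ where (H2) forces $\psi^{\varepsilon}>0$; these are equivalent in substance, and your assembly of the pieces is sound.
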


\begin{proof}
    Using Riesz's rearrangement inequality, we obtain
    \begin{equation*}
        \mathcal{E}_\varepsilon(\zeta^\varepsilon)\le \frac{\kappa^2}{4\pi}\ln\frac{1}{\varepsilon}+C.
    \end{equation*}
    Combining this and Lemma \ref{le2.3}, we get \eqref{2-11}. Using the same argument as in the proof of Lemma \ref{le2.5}, we can easily get \eqref{2-12}. The proof is complete.
\end{proof}

Now we turn to estimate the size of the supports of $\zeta^\varepsilon$ for $\varepsilon$ small. To
this end, we first recall an auxiliary lemma.
\begin{lemma}[\cite{Cao2}, Lemma 2.8]\label{le2.7}
    Let $\Omega\subset D$, $0<\varepsilon<1$, $\eta\ge0$, and let $\xi$ be a non-negative function satisfying $\xi\in L^1(D)$, $\int_D \xi(x)dx=1$ and $||\xi||_{L^p(D)}\le C_1 \varepsilon^{-2(1-{1}/{p})}$ for some $1<p\le +\infty$ and $C_1>0$. Suppose for any $x\in \Omega$, there holds
    \begin{equation}\label{2-7}
        (1-\eta)\log\frac{1}{\varepsilon}\le \int_D \log\frac{1}{|x-y|}\xi(y)dy+C_2,
    \end{equation}
    where $C_2$ is a positive constant.
    Then there exists some constant $R>1$ such that
    \begin{equation*}
        diam(\Omega)\le R\varepsilon^{1-2\eta}.
    \end{equation*}
    The constant $R$ may depend on $C_1$, $C_2$, but not on $\eta$, $\varepsilon$.
\end{lemma}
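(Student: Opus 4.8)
The plan is to deduce the diameter bound from a \emph{concentration estimate}: I would show that there is a constant $R'>1$, depending only on $C_1,C_2,p$, such that for every $x\in\Omega$ more than half the mass of $\xi$ lies in the ball $B_\rho(x)$ with $\rho:=R'\varepsilon^{1-2\eta}$, i.e.
\[
m(x,\rho):=\int_{B_\rho(x)}\xi(y)\,dy>\tfrac12 .
\]
Once this is available the conclusion is immediate: for any $x_1,x_2\in\Omega$ the balls $B_\rho(x_1)$ and $B_\rho(x_2)$ both carry mass exceeding $\tfrac12$, so they cannot be disjoint (their masses would otherwise sum to more than $\int_D\xi=1$); hence $|x_1-x_2|\le 2\rho$, and taking the supremum gives $\mathrm{diam}(\Omega)\le 2R'\varepsilon^{1-2\eta}$, which is the claim with $R=2R'$. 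We may assume $\eta<\tfrac12$, since for $\eta\ge\tfrac12$ the bound is trivial because $\Omega\subset D$ is bounded.

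I would prove the concentration estimate by contradiction, using only the two hypotheses on $\xi$. The $L^p$ bound gives, via H\"older, for all $\sigma>0$,
\[
m(x,\sigma)\le\|\xi\|_{L^p(D)}\,|B_\sigma|^{1/p'}\le C\,(\sigma/\varepsilon)^{2/p'},\qquad \tfrac1p+\tfrac1{p'}=1,
\]
with $C=C(C_1,p)$, while trivially $m(x,\sigma)\le1$. Suppose the estimate fails at some $x\in\Omega$, so that $m(x,\rho)\le\tfrac12$, and hence $m(x,\sigma)\le\tfrac12$ for all $\sigma\le\rho$. Writing the potential through its distribution function (the region $|x-y|>1$ only decreases the left-hand side),
\[
\int_D\log\tfrac1{|x-y|}\,\xi(y)\,dy\le\int_0^\infty m(x,e^{-t})\,dt ,
\]
I would insert the bound $m(x,\sigma)\le\min\{1,C(\sigma/\varepsilon)^{2/p'}\}$ together with the extra constraint $m(x,\sigma)\le\tfrac12$ valid for $\sigma\le\rho$, and split the $t$-integral at the two radii $\sigma=\rho$ and $\sigma=c\varepsilon$, the latter being the scale at which $C(\sigma/\varepsilon)^{2/p'}$ crosses $\tfrac12$.

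The three resulting regimes contribute, respectively, $\log\tfrac1\rho$ (from $m\le1$ on $\rho\le\sigma\le1$), $\tfrac12\log\tfrac{\rho}{c\varepsilon}$ (from $m\le\tfrac12$ on $c\varepsilon\le\sigma\le\rho$), and an $O(1)$ tail (from the $L^p$ bound on $\sigma\le c\varepsilon$; here $p>1$ is needed so that the tail integral converges). Using $\rho=R'\varepsilon^{1-2\eta}$, the middle interval has length $\log\tfrac{\rho}{c\varepsilon}=2\eta\log\tfrac1\varepsilon+O(1)$, so its $\tfrac12$-weight restores exactly an $\eta\log\tfrac1\varepsilon$ on top of the leading $(1-2\eta)\log\tfrac1\varepsilon$ of the first regime, and altogether
\[
\int_D\log\tfrac1{|x-y|}\,\xi(y)\,dy\le(1-\eta)\log\tfrac1\varepsilon-\tfrac12\log R'+C',
\]
with $C'=C'(C_1,p)$ independent of $\eta$ and $\varepsilon$. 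Comparing with the hypothesis $(1-\eta)\log\tfrac1\varepsilon-C_2\le\int_D\log\tfrac1{|x-y|}\xi\,dy$ forces $\tfrac12\log R'\le C_2+C'$; choosing $R'$ larger than $e^{2(C_2+C')}$ yields a contradiction and proves the concentration estimate for every $x\in\Omega$.

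The step requiring the most care — and the main obstacle — is precisely this upper bound on the logarithmic potential: a global H\"older estimate over $B_\rho(x)$ is hopelessly lossy once $\rho\gg\varepsilon$, and the sharp exponent $1-2\eta$ is recovered only by feeding the half-mass constraint $m(x,\sigma)\le\tfrac12$ into the intermediate annulus $c\varepsilon\le|x-y|\le\rho$ through the distribution-function decomposition. The borderline case $p=+\infty$ ($p'=1$) is handled identically, replacing the H\"older bound by $\|\xi\|_{L^\infty}\le C_1\varepsilon^{-2}$.
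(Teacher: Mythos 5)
The paper offers no proof of this lemma: it is imported verbatim from \cite{Cao2} (Lemma 2.8 there), so there is no in-paper argument to compare against, and the relevant comparison is with the standard proof in that reference, which your proposal reproduces correctly and in essentially the same way. Your scheme --- the half-mass concentration estimate at scale $\rho=R'\varepsilon^{1-2\eta}$, proved by a layer-cake bound on the logarithmic potential split at the two scales $\rho$ and $c\varepsilon$ with the contradiction hypothesis $m(x,\sigma)\le\tfrac12$ fed into the middle annulus, followed by the two-ball intersection argument --- is the classical Turkington-type argument, and your bookkeeping is right: the three regimes contribute $(1-2\eta)\log\tfrac1\varepsilon-\log R'$, then $\eta\log\tfrac1\varepsilon+\tfrac12\log R'+O(1)$, then $O(1)$ (convergent precisely because $p>1$), so the hypothesis \eqref{2-7} forces $\tfrac12\log R'\le C_2+C'$, a contradiction once $R'>e^{2(C_2+C')}$. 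Only two routine points are left implicit: the splitting scales must be ordered, $c\varepsilon\le\rho$, which holds once $R'\ge c$ because $\varepsilon^{-2\eta}\ge 1$; and the degenerate case $\rho\ge \mathrm{diam}(D)$, where the concentration claim is trivial since then $B_\rho(x)\supseteq D$ for every $x\in\Omega$.
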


Note that
\begin{equation*}
    G(x,y)\le \frac{1}{2\pi}\log\frac{1}{|x-y|}+C\ \ \  \text{in}\ D\times D,
\end{equation*}
and
\begin{equation*}
    \text{supp}(\zeta^\varepsilon)\subseteq \{x\in D\mid \mathcal{G}\zeta^\varepsilon(x)-Wx_2 \ge \mu^\varepsilon\}.
\end{equation*}

Using Lemmas \ref{le2.5} and \ref{le2.7}, we immediately get the following estimate.
\begin{lemma}\label{le2.8}
    Let $\zeta^{\varepsilon}$ be a maximizer as in Lemma \ref{le2.1}, then for $\varepsilon$ small it holds $\text{supp}(\zeta^\varepsilon)\le C\varepsilon$.
\end{lemma}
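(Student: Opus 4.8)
The plan is to invoke the auxiliary Lemma~\ref{le2.7} with the borderline choice $\eta=0$ and exponent $p=+\infty$, applied to the normalized vorticity $\xi:=\zeta^\varepsilon/\kappa$ and the set $\Omega:=\mathrm{supp}(\zeta^\varepsilon)$. First I would check that $\xi$ meets the hypotheses of that lemma. By Corollary~\ref{le2.6} we have $\int_D\zeta^\varepsilon\,dx=\kappa$, so $\int_D\xi\,dx=1$. Since $\zeta^\varepsilon\in\mathcal{A}_{\varepsilon,\Lambda_0}$ obeys $0\le\zeta^\varepsilon\le\Lambda_0\varepsilon^{-2}$, we get $\|\xi\|_{L^\infty(D)}\le(\Lambda_0/\kappa)\varepsilon^{-2}$, which is precisely the required bound $\|\xi\|_{L^p(D)}\le C_1\varepsilon^{-2(1-1/p)}$ with $p=+\infty$ and $C_1=\Lambda_0/\kappa$.

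Next I would establish the key pointwise inequality \eqref{2-7} with $\eta=0$. Fix any $x\in\mathrm{supp}(\zeta^\varepsilon)$. By the support inclusion recorded just above the statement, $\mathcal{G}\zeta^\varepsilon(x)-Wx_2\ge\mu^\varepsilon$. Combining this with the Green's-function estimate $G(x,y)\le\frac{1}{2\pi}\log\frac{1}{|x-y|}+C$ on $D\times D$ and using that $x_2$ is bounded on $D$ (so $-Wx_2$ contributes only a bounded term), one obtains
\[
\mu^\varepsilon\le\frac{1}{2\pi}\int_D\log\frac{1}{|x-y|}\zeta^\varepsilon(y)\,dy+C.
\]
Inserting the sharp asymptotic $\mu^\varepsilon=\frac{\kappa}{2\pi}\log\frac{1}{\varepsilon}+O(1)$ from \eqref{2-12} and multiplying through by $2\pi/\kappa$ yields
\[
\log\frac{1}{\varepsilon}\le\int_D\log\frac{1}{|x-y|}\xi(y)\,dy+C_2
\]
for a constant $C_2$ independent of $\varepsilon$ and of $x$. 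This is exactly \eqref{2-7} with $\eta=0$.

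With both the $L^\infty$ bound and \eqref{2-7} in hand, Lemma~\ref{le2.7} applies and gives $\mathrm{diam}(\mathrm{supp}(\zeta^\varepsilon))\le R\varepsilon$ (taking $\eta=0$ in the exponent $\varepsilon^{1-2\eta}$), which is the asserted bound. The only real point to watch is that all the error terms collected along the way---the additive constant in the Green's-function bound, the bounded contribution of $-Wx_2$, and the $O(1)$ remainder in the asymptotic for $\mu^\varepsilon$---remain uniformly bounded in $\varepsilon$ and independent of the point $x$, so that they can be absorbed into a single constant $C_2$ without eroding the leading coefficient of $\log\frac{1}{\varepsilon}$. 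This uniformity is what licenses the sharp choice $\eta=0$, and hence the clean linear-in-$\varepsilon$ diameter bound rather than a weaker $\varepsilon^{1-2\eta}$ estimate. I do not anticipate a genuine obstacle here: the heavy lifting was already done in establishing the matched asymptotics \eqref{2-11}--\eqref{2-12}, and the present lemma is essentially a repackaging of those estimates into a geometric conclusion via Lemma~\ref{le2.7}.
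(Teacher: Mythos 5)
Your proposal is correct and follows essentially the same route as the paper: the paper also combines the support inclusion $\mathrm{supp}(\zeta^\varepsilon)\subseteq\{\mathcal{G}\zeta^\varepsilon-Wx_2\ge\mu^\varepsilon\}$, the Green's-function bound $G(x,y)\le\frac{1}{2\pi}\log\frac{1}{|x-y|}+C$, and the lower bound on $\mu^\varepsilon$ (Lemma \ref{le2.5}, equivalently the asymptotic \eqref{2-12} you cite) to verify hypothesis \eqref{2-7} with $\eta=0$, $p=+\infty$ for $\xi=\zeta^\varepsilon/\kappa$, and then invokes Lemma \ref{le2.7}. Your write-up simply makes explicit the normalization and uniformity checks that the paper leaves as ``immediate.''
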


We proceed to determine the limiting location and the asymptotic shape of $\operatorname{supp}\left(\zeta^{\varepsilon}\right)$. Define the center of $\zeta^{\varepsilon}$ by
$$
x^{\varepsilon}:=\frac{1}{\kappa} \int_{\Pi} x \zeta^{\varepsilon}(x) d x .
$$
From now on for the remainder of the discussion we fix a sequence $\varepsilon=\varepsilon_{j} \rightarrow 0^{+}$such that

\begin{equation}\tag{$\ast$}\label{star}
    x^{\varepsilon} \rightarrow (0,\bar{r}) , \text { as } \varepsilon=\varepsilon_{j} \rightarrow 0^{+}, \text{where} \ \bar{r} \in [r^*/2,2r^*]. 
\end{equation}

Next we study the limiting behavior of the corresponding stream functions $\psi^{\varepsilon}$. Define the scaled versions of $\psi^{\varepsilon}$ as follows
$$
 \Psi^{\varepsilon}(y):=\psi^{\varepsilon}(x^{\varepsilon}+\varepsilon y),\ \ \ \ y \in \Pi^{\varepsilon}:=\left\{y \in \mathbb{R}^{2} \mid x^{\varepsilon}+\varepsilon y \in \Pi\right\}.
$$
Thus, we have
\begin{equation}\label{2-13}
   -\Delta \Psi^{\varepsilon}=(1+\alpha (x_2^{\varepsilon}+\varepsilon y_2))f(\Psi^{\varepsilon})\ \ \text { in } \Pi^{\varepsilon},
\end{equation}
\begin{equation}\label{2-14}
   \int_{\Pi^{\varepsilon}}(1+\alpha (x_2^{\varepsilon}+\varepsilon y_2))f(\Psi^{\varepsilon}) d y=\kappa.
\end{equation}

Note that $\left\{x \in D \mid \Psi^{\varepsilon}(x)>0\right\} \subset B_{R_{0}}(0)$. As mentioned in section \ref{sect 1}, we introduce the limiting profile $U^{\kappa}: \mathbb{R}^{2} \rightarrow \mathbb{R}$ defined as the unique radially symmetric solution of the problem
\begin{equation}
    \left\{\begin{array}{l}
        -\Delta U^{\kappa}=f(U^{\kappa}), \quad x \in \mathbb{R}^{2} \\
        \int_{\mathbb{R}^{2}}f(U^{\kappa}) d x=\kappa.
    \end{array}\right.
\end{equation}

\begin{lemma}\label{lem2.13}
  As $\varepsilon \rightarrow 0^{+}$, we have $\Psi^{\varepsilon}(x) \rightarrow U^{\kappa}\left(\left(1+\alpha \bar{r}\right)^{\frac{1}{2}} x\right)$ in $C_{\text {loc }}^{1, \gamma}\left(\mathbb{R}^{2}\right)$, where $\bar{r}$ is the same as in \eqref{star}
\end{lemma}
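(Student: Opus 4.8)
The plan is to combine the uniform a priori estimates already proved with standard elliptic compactness, and then to pin down the limit by symmetry together with the uniqueness of the radial profile $U^\kappa$. Throughout I work with $\Lambda=\Lambda_0$ fixed, so that by Lemma \ref{le12} the patch part has vanished and $\zeta^\varepsilon=\varepsilon^{-2}(1+\alpha x_2)f(\psi^\varepsilon)$, and \eqref{2-13}--\eqref{2-14} hold.

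First I would establish uniform local bounds on $\Psi^\varepsilon$. The a priori upper bound of Lemma \ref{le11} gives $\psi^\varepsilon\le C$ uniformly in $\varepsilon$ (recall $\int_D\partial_sJ(x_2,\Lambda_0)\zeta^{\varepsilon}\,dx\le C^\ast$), hence $\Psi^\varepsilon\le C$; since $\{\Psi^\varepsilon>0\}\subset B_{R_0}(0)$ we have $0\le\Psi^\varepsilon\le C$ on $B_{R_0}$ and $\Psi^\varepsilon$ harmonic elsewhere. Using the representation $\Psi^\varepsilon(y)=\mathcal{G}\zeta^\varepsilon(x^\varepsilon+\varepsilon y)-W(x_2^\varepsilon+\varepsilon y_2)-\mu^\varepsilon$ together with $\mu^\varepsilon=\tfrac{\kappa}{2\pi}\log\tfrac1\varepsilon+O(1)$ from \eqref{2-12} and the support estimate $\mathrm{diam}\,\mathrm{supp}(\zeta^\varepsilon)\le C\varepsilon$ of Lemma \ref{le2.8}, one checks that $\Psi^\varepsilon$ is bounded below on every compact set, so $\|\Psi^\varepsilon\|_{L^\infty(K)}\le C_K$ for each compact $K\subset\mathbb{R}^2$. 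Because $f\in C(\mathbb{R}\setminus\{0\})$ with $f(0^+)$ finite and $f\equiv0$ on $(-\infty,0]$, the right-hand side $(1+\alpha(x_2^\varepsilon+\varepsilon y_2))f(\Psi^\varepsilon)$ of \eqref{2-13} is uniformly bounded in $L^\infty(K)$.

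Next I would extract a limit and identify the limiting equation. Interior $W^{2,p}$ estimates applied to \eqref{2-13} show that $\Psi^\varepsilon$ is bounded in $W^{2,p}(K)$ for every $p<\infty$, hence in $C^{1,\gamma}(K)$ for any $\gamma\in(0,1)$ by Sobolev embedding; by Arzel\`a--Ascoli and a diagonal argument, along a subsequence $\Psi^\varepsilon\to\Psi_0$ in $C^{1,\gamma'}_{loc}(\mathbb{R}^2)$ for $\gamma'<\gamma$. Since $x_2^\varepsilon\to\bar r$, $\varepsilon y_2\to0$ locally uniformly, $\Pi^\varepsilon\to\mathbb{R}^2$, and $\{\Psi_0=0\}$ has measure zero, passing to the limit gives $-\Delta\Psi_0=(1+\alpha\bar r)f(\Psi_0)$ in $\mathbb{R}^2$ (in the weak sense), while the uniform confinement $\{\Psi^\varepsilon>0\}\subset B_{R_0}$ lets me pass to the limit in \eqref{2-14} to get $\int_{\mathbb{R}^2}(1+\alpha\bar r)f(\Psi_0)\,dy=\kappa$. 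Setting $W^\kappa(y):=U^\kappa\big((1+\alpha\bar r)^{1/2}y\big)$, a direct scaling computation yields $-\Delta W^\kappa=(1+\alpha\bar r)f(W^\kappa)$ and $\int_{\mathbb{R}^2}(1+\alpha\bar r)f(W^\kappa)\,dy=\kappa$, so $W^\kappa$ solves the same limiting problem; it remains to prove $\Psi_0=W^\kappa$. Since $\rho_0:=(1+\alpha\bar r)f(\Psi_0)\ge0$ is compactly supported with $\int\rho_0=\kappa$ and $\Psi_0$ is bounded above, the Newtonian representation forces $\Psi_0(y)=-\tfrac{\kappa}{2\pi}\log|y|+O(1)$ as $|y|\to\infty$, so $\Psi_0\to-\infty$ and $(\Psi_0)_+$ is compactly supported. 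Because $(1+\alpha\bar r)f$ is nondecreasing, the method of moving planes applies and shows $\Psi_0$ is radially symmetric about some point $y_0$; and the normalization $x^\varepsilon=\tfrac1\kappa\int x\zeta^\varepsilon$ gives $\int_{\mathbb{R}^2}y\,\rho_\varepsilon(y)\,dy=0$ for the rescaled densities $\rho_\varepsilon:=\varepsilon^2\zeta^\varepsilon(x^\varepsilon+\varepsilon\,\cdot\,)$, whence $\int y\rho_0=0$ and $y_0=0$. By the uniqueness of the radial solution of the limiting problem with mass $\kappa$, $\Psi_0=W^\kappa$; as the limit is independent of the subsequence, the full family converges, which proves the lemma.

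The main obstacle is the identification step, specifically deducing radial symmetry of $\Psi_0$ from the merely continuous, nondecreasing nonlinearity: the moving-plane argument must accommodate the possible jump of $f$ at $0$ (only the monotonicity, not Lipschitz continuity, is available) and the logarithmic decay $\Psi_0\to-\infty$ at infinity, after which radial ODE uniqueness pins down $W^\kappa$. By contrast, the uniform bounds and the elliptic compactness yielding $\Psi_0$ are routine once the earlier lemmas (notably Lemmas \ref{le11}, \ref{le2.8} and \ref{lem2-9}) are in hand.
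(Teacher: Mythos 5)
Your first half—uniform local bounds on $\Psi^{\varepsilon}$ via Lemma \ref{le11}, \eqref{2-12} and Lemma \ref{le2.8}, interior $W^{2,p}$ estimates, and extraction of a $C^{1,\gamma}_{\mathrm{loc}}$ limit $\Psi_0$ solving $-\Delta\Psi_0=(1+\alpha\bar r)f(\Psi_0)$ with mass $\kappa$—is correct and coincides with the paper's argument (the paper is terser but does exactly this). The genuine gap is in the identification step, precisely the one you flag as the ``main obstacle'': the assertion ``because $(1+\alpha\bar r)f$ is nondecreasing, the method of moving planes applies'' is not a citable fact, and for the nonlinearities admitted by (H1)--(H2) the standard method actually breaks down. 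In the moving-plane scheme one must exclude the bad set $\{w_\lambda<0\}$, where $w_\lambda=u_\lambda-u$ is the reflected difference, and this requires the one-sided Lipschitz bound $f(u_\lambda)-f(u)\ge -L\,(u-u_\lambda)=L\,w_\lambda$ there, so that the narrow-domain/small-volume maximum principle can be applied to $-\Delta w_\lambda - L\,w_\lambda\ge 0$. Monotonicity alone gives only $f(u_\lambda)-f(u)\le 0$ on $\{w_\lambda<0\}$, i.e.\ $w_\lambda$ is \emph{subharmonic} on its negativity set, which is exactly the direction from which the maximum principle yields nothing (a negative dip vanishing on the boundary of its negativity set is perfectly consistent with subharmonicity). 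Since $f$ here may be merely continuous away from the origin, nondecreasing, with a possible jump at $0$ (the case $f(0^+)>0$ runs through the whole paper), no off-the-shelf moving-plane theorem covers $\Psi_0$; what would be needed is a kernel-comparison (Fraenkel-type) version of the method plus a genuine analysis of the far field, where $\Psi_0(y)=-\frac{\kappa}{2\pi}\log|y|+O(1)$ and the reflected difference degenerates near the reflection line at infinity. None of that is supplied, so the central step of the lemma is missing; recall also that for nondecreasing, non-Lipschitz $f$ (e.g.\ $f(t)=t_+^{q}$, $0<q<1$) the equation $-\Delta u=f(u)$ does admit non-radial multi-bump solutions, so any symmetry proof must really use the specific structure, not just monotonicity.

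The paper takes a different, variational route for exactly this step, which your PDE-only argument discards: since $\Psi_0$ arises as a limit of rescaled \emph{maximizers}, the paper infers (invoking Lemma \ref{lem2-9} and the rearrangement structure of the maximization) that the weak-star limit $\zeta$ of $\varepsilon^{2}\zeta^{\varepsilon}(x^{\varepsilon}+\varepsilon\,\cdot\,)$ is radially non-increasing, and then \eqref{2-12} together with the Green representation gives that $\Psi$, hence $\tilde\Psi$, is radial; the uniqueness of the radial profile then yields $\tilde\Psi\equiv U^{\kappa}$. If you want to salvage your proof, replace the moving-plane claim by this rearrangement information (the limit of maximizers is itself Schwarz-symmetric, via the equality case in Riesz's inequality), for which your center-of-mass normalization $\int y\,\rho_{\varepsilon}(y)\,dy=0$ is a clean way to locate the center of symmetry at the origin. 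A secondary, repairable point: your passage to the limit in the equation tacitly assumes $|\{\Psi_0=0\}|=0$, which is not known a priori when $f(0^+)>0$; this can be fixed by Stampacchia's theorem ($\Delta\Psi_0=0$ a.e.\ on $\{\Psi_0=0\}$, matching $f(\Psi_0)=0$ there), but as written it is another unproved assertion.
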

\begin{proof}
     Note that $\left\{\varepsilon^2 \zeta^{\varepsilon}(x^{\varepsilon}+\varepsilon y)\right\}$ is bounded in $L^{\infty}\left(\Pi^{\varepsilon}\right)$. Thus, by classical elliptic estimates, the sequence $\left\{\Psi^{\varepsilon}\right\}$ is bounded in $W_{\text {loc }}^{2, p}\left(\Pi^{\varepsilon}\right)$ for every $1 \leq p<+\infty$. By the Sobolev embedding theorem, we may conclude $\left\{\Psi^{\varepsilon}\right\}$ is compact in $C_{\text {loc }}^{1, \gamma}\left(\Pi^{\varepsilon}\right)$ for every $0<\gamma<1$. Up to a subsequence we may assume $\varepsilon^2 \zeta^{\varepsilon}(x^{\varepsilon}+\varepsilon y) \rightarrow \zeta(y)$ weakly-star in $L^{\infty}\left(\R^2\right)$ and $\Psi^{\varepsilon} \rightarrow \Psi$ in $C_{\text {loc }}^{1, \gamma}\left(\R^2\right)$. Let $\tilde{\Psi}(x):=\Psi\left(x /\left(1+\alpha \bar{r} \right)^{\frac{1}{2}}\right)$. By virtue of \eqref{2-13},\eqref{2-14} and $f$ is non-negative, we get $$
     -\Delta \tilde{\Psi}=f(\tilde{\Psi}) \text { in } \mathbb{R}^{2}, \quad \int_{\mathbb{R}^{2}} f(\tilde{\Psi}) d x=\kappa.
     $$ In view of Lemma \ref{lem2-9}, we know that $\zeta$ is a radially non-increasing function. Using this fact, \eqref{2-12} and Green's representation, it is not hard to find that $\Psi$ and $\tilde{\Psi}$ are also radial. Therefore, we have $\tilde{\Psi} \equiv U^{\kappa}$. Thus the proof is complete.
\end{proof}
\begin{remark}
    With the above results in hand, one can further study the shape of the free boundary $\{x\in D \mid \psi^\varepsilon(x)=0\}$ by the standard scaling techniques (see \cite{Cao2, Tur83}). The vortex core will be approximately a disk.
\end{remark}

As a consequence of Lemma \ref{lem2.13}, we can obtain as follows.
\begin{corollary}\label{lem2.15}
   As $\varepsilon \rightarrow 0^{+}$, one has $\varepsilon^2\zeta^{\varepsilon}(x^{\varepsilon}+\varepsilon x) \rightarrow\left(1+\alpha \bar{r}\right)f\Big(U^{\kappa}\big(\left(1+\alpha \bar{r}\right)^{\frac{1}{2}} x\big)\Big)$ in $L^{\infty}\left(\mathbb{R}^{2}\right)$.
\end{corollary}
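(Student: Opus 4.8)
The plan is to read off the pointwise form of the maximizer from the Euler--Lagrange analysis and then rescale and pass to the limit using the profile convergence already established. By Lemma~\ref{le12} (with the abbreviation $\zeta^\varepsilon=\zeta^{\varepsilon,\Lambda_0}$, $\psi^\varepsilon=\psi^{\varepsilon,\Lambda_0}$), the patch part has been removed, so that for all sufficiently small $\varepsilon$,
\[
\zeta^\varepsilon=\frac{1}{\varepsilon^2}\,i(x_2,\psi^\varepsilon)\qquad\text{a.e. in }D .
\]
Since we are in the regime $g=\alpha f$, we have $i(x_2,t)=f(t)+x_2g(t)=(1+\alpha x_2)f(t)$, and therefore, writing the rescaled stream function $\Psi^\varepsilon(x)=\psi^\varepsilon(x^\varepsilon+\varepsilon x)$ as in Lemma~\ref{lem2.13},
\[
\varepsilon^2\zeta^\varepsilon(x^\varepsilon+\varepsilon x)=\bigl(1+\alpha(x_2^\varepsilon+\varepsilon x_2)\bigr)\,f\bigl(\Psi^\varepsilon(x)\bigr).
\]
This identity reduces the corollary to a statement about the convergence of the right-hand side.

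Next I would pass to the limit in each factor. From \eqref{star} we have $x_2^\varepsilon\to\bar r$, and for $x$ in any fixed compact set $\varepsilon x_2\to0$, so the prefactor $1+\alpha(x_2^\varepsilon+\varepsilon x_2)\to1+\alpha\bar r$ uniformly on compacts. Lemma~\ref{lem2.13} gives $\Psi^\varepsilon\to U^\kappa\bigl((1+\alpha\bar r)^{1/2}\,\cdot\bigr)$ in $C^{1,\gamma}_{\mathrm{loc}}(\R^2)$; in particular the convergence is locally uniform, and since $f$ is continuous and hence uniformly continuous on the bounded range swept out by the $\Psi^\varepsilon$ over a fixed compact set, one obtains $f(\Psi^\varepsilon)\to f\bigl(U^\kappa((1+\alpha\bar r)^{1/2}\,\cdot)\bigr)$ locally uniformly. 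Multiplying the two convergent factors yields the claimed limit locally uniformly.

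To upgrade this to convergence in $L^\infty(\R^2)$ I would use that both functions are supported in a single fixed ball. Indeed, $\{x\mid\Psi^\varepsilon(x)>0\}\subset B_{R_0}(0)$ (recorded just before Lemma~\ref{lem2.13}, and consistent with the support estimate of Lemma~\ref{le2.8}), while the limit $f\bigl(U^\kappa((1+\alpha\bar r)^{1/2}\,\cdot)\bigr)$ vanishes outside the compact positive set of $U^\kappa$ because $f(t)=0$ for $t\le0$. Outside a fixed ball containing both supports the difference is identically zero, so the supremum of the difference is realized on that ball, where the local uniform convergence applies.

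The hard part will be the behavior in the thin layer around the free boundary $\{U^\kappa((1+\alpha\bar r)^{1/2}\,\cdot)=0\}$, where the comparison between $f(\Psi^\varepsilon)$ and $f$ of the limit is most delicate. When $f$ is continuous at the origin, the uniform continuity of $f$ together with the $C^0$ part of Lemma~\ref{lem2.13} closes this directly. If instead $f$ carries a jump at the origin, I would additionally exploit the $C^1$ convergence of Lemma~\ref{lem2.13} together with the non-degeneracy of $\nabla U^\kappa$ on its free boundary (the Pohozaev/Hopf gradient bound recorded in Section~\ref{sect 1}) to show that the sign-discrepancy region between $\{\Psi^\varepsilon>0\}$ and the limiting support collapses, so that the two compositions agree in the limit. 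This free-boundary analysis, rather than the algebraic substitution, is where I expect the real work to lie.
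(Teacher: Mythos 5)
Your reduction is exactly the paper's intended argument: the paper states this corollary as a direct consequence of Lemma~\ref{lem2.13}, and the chain you write --- Lemma~\ref{le12} gives $\varepsilon^2\zeta^\varepsilon(x^\varepsilon+\varepsilon x)=\bigl(1+\alpha(x_2^\varepsilon+\varepsilon x_2)\bigr)f\bigl(\Psi^\varepsilon(x)\bigr)$, the prefactor converges by \eqref{star}, $f(\Psi^\varepsilon)$ converges locally uniformly by Lemma~\ref{lem2.13} and continuity of $f$, and the common compact supports upgrade this to $L^\infty(\mathbb{R}^2)$ --- is precisely the proof the authors have in mind. When $f$ is continuous at the origin (i.e.\ $f(0^+)=0$), your argument is complete and correct, and nothing more needs to be said.

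The gap is in your final paragraph, the jump case $f(0^+)>0$, which (H1)--(H2) do allow even with $g=\alpha f$ (e.g.\ $f(s)=(1+s)\chi_{\{s>0\}}$). What the $C^1$ convergence of Lemma~\ref{lem2.13} plus the nondegeneracy of $\nabla U^\kappa$ on its free boundary gives you is that the sign-discrepancy set $\{\Psi^\varepsilon>0\}\,\triangle\,\{U>0\}$ (with $U:=U^\kappa((1+\alpha\bar r)^{1/2}\cdot)$) shrinks in measure and in Hausdorff distance; it does not make the essential supremum of the difference small. Indeed, since $f$ is nondecreasing with $f\geq f(0^+)$ on $(0,\infty)$ and $f\equiv 0$ on $(-\infty,0]$, at every point of that discrepancy set one of the two compositions vanishes and the other is at least $f(0^+)$, so
\begin{equation*}
\bigl\|\varepsilon^2\zeta^\varepsilon(x^\varepsilon+\varepsilon\,\cdot)-(1+\alpha\bar r)f(U)\bigr\|_{L^\infty}\;\geq\; f(0^+)>0
\end{equation*}
whenever that set has positive measure --- and there is no mechanism forcing the two open sets $\{\Psi^\varepsilon>0\}$ and $\{U>0\}$ to coincide up to a null set at finite $\varepsilon$. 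So the ``collapse'' argument can only yield convergence in measure (equivalently in $L^p$ for every $p<\infty$, after restricting to the fixed ball containing both supports), and in the jump case that is the most that is true. To be fair, the paper is subject to the same caveat: its $L^\infty$ claim should be read under continuity of $f$ at $0$, and the sole downstream use of this corollary (the expansion of $\varepsilon^{-2}\int_D J(x_2,\varepsilon^2\zeta^\varepsilon)\,dx$ in Lemma~\ref{le2.9}) only requires uniform boundedness together with convergence off a set of vanishing measure, so the weaker conclusion suffices there. In your write-up you should either assume $f(0^+)=0$ for the $L^\infty$ statement, or state the jump-case conclusion as convergence in measure/$L^p$ rather than $L^\infty$.
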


We have shown that the vorticies $\{\zeta^\varepsilon\}$ would shrink to some point in $\bar{D}$ when $\varepsilon \to 0$. Now we investigate this limiting location. Let
\begin{equation}\label{2-18}
    \mathcal{W}(t)=\frac{\kappa^2}{4\pi}\log\frac{1}{2t}+{\kappa Wt}+\int_{\R^2}\frac{1+\alpha r^{*}}{1+\alpha t} J_{F}\left(\frac{1+\alpha t}{1+\alpha r^{*}}f(U^{\kappa}(z))\right)dz,\ \ t>0.
\end{equation}
  We want to show  that $r^*$ is the unique minimum point  $\mathcal{W}(r^{*})=\min_{t>0} \mathcal{W}(t)$. By calculating
  \begin{equation*}
\begin{split}
    \frac{d \mathcal{W}(t)}{dt}&=-\frac{\kappa^2}{4\pi t}+\kappa W-\int_{\R^2}{\alpha}\frac{1+\alpha r^*}{(1+\alpha t)^2}J_{F}\Big(\frac{1+\alpha t}{1+\alpha r^*}f\big(U^{\kappa}(z)\big)\Big)dz\\
    &\ \ \ \ \ \ \ +\frac{\alpha}{1+\alpha t}\int_{\R^2}\partial_{s}J_{F}\Big(\frac{1+\alpha t}{1+\alpha r^*}  f\big(U^{\kappa}(z)\big)\Big) f\big(U^{\kappa}(z)\big)dz.\\
\end{split}
\end{equation*}
Let 
\begin{equation*}
     K(s):=\partial_{s}J_{F}(s)s-J_{F}(s).
\end{equation*}
By \eqref{1-g}, we note that 
\begin{equation*}
    K(s)=J_{F}^*(\sigma)=F(\sigma), \ \text{for} \ s>0,
\end{equation*}
where $J_{F}^*$ be the conjugate function to $J_{F}$ and $\sigma=\partial_{s}J_{F}(s)$ is non-decreasing with respect to $s$, then we get $K$ is non-decreasing. Recall  that $r^{*}$ defined by \eqref{2.1}.
Thus we have
\begin{align*}
    \frac{d \mathcal{W}(t)}{dt}=\frac{1}{{(1+\alpha t)^2}}\Big(&M(t)-M(r^*)+{\alpha}{(1+\alpha r^*)}\times\\
    & \int_{\R^2}\Big(K\Big(\frac{1+\alpha t}{1+\alpha r^*}f\big(U^{\kappa}(z)\big)\Big)-K(f\big(U^{\kappa}(z)\big))\Big)dz\Big),
\end{align*}
where 
\begin{equation*}
    M(t)=(1+\alpha t)^2(-\frac{\kappa^2}{4 \pi t}+\kappa W).
\end{equation*}
We notice that $ W\ge \frac{(2-\alpha)_{+}^2\kappa}{32 \pi}$, so
\begin{equation*}
    M'(t)=\kappa\frac{(1+\alpha t)}{(4\pi t^2)}(8\pi W t^2-(2-\alpha)\kappa t+\kappa)\ge 0.\
\end{equation*}
$M'(t)$ is zero at most finitely many points, so $r^*$ is the only minimum point. Hence we obtain $\mathcal{W}(r^{*})=\min_{t>0} \mathcal{W}(t)$.

The following lemma show that the limiting location is $b_1$.
\begin{lemma}\label{le2.9}
    One has
    \begin{equation*}
        dist\left(supp(\zeta^\varepsilon), b_1\right)\to 0, \ \ \text{as}\ \varepsilon \to 0
    \end{equation*}
\end{lemma}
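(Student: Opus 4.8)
The plan is to reduce the statement to the single fact that the limiting height $\bar r$ in \eqref{star} coincides with $r^*$. By Lemma~\ref{le2.8} the support of $\zeta^\varepsilon$ has diameter $O(\varepsilon)$, and by construction the center $x^\varepsilon=\kappa^{-1}\int_\Pi x\,\zeta^\varepsilon\,dx$ converges to $(0,\bar r)$. Hence for every $z\in\mathrm{supp}(\zeta^\varepsilon)$ we have $|z-(0,\bar r)|\le |z-x^\varepsilon|+|x^\varepsilon-(0,\bar r)|\to 0$, so that $\mathrm{dist}(\mathrm{supp}(\zeta^\varepsilon),b_1)\to 0$ the moment we know $\bar r=r^*$. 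Everything therefore rests on an energy-selection argument that pins down the limiting location.

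The first step is to prove the sharp asymptotic expansion
\begin{equation*}
  \mathcal{E}_\varepsilon(\zeta^\varepsilon)=\frac{\kappa^2}{4\pi}\log\frac1\varepsilon-\mathcal{W}(\bar r)+o(1),\qquad \varepsilon\to 0^+,
\end{equation*}
with $\mathcal{W}$ as in \eqref{2-18}. I would split the Green kernel as $G(x,y)=\frac1{2\pi}\log\frac1{|x-y|}+\frac1{2\pi}\log|\bar x-y|$. After the blow-up $x=x^\varepsilon+\varepsilon y$ and Corollary~\ref{lem2.15}, the singular part yields $\frac{\kappa^2}{4\pi}\log\frac1\varepsilon$ plus a self-interaction term carrying the profile rescaling $(1+\alpha\bar r)^{1/2}$, the image part tends to $\frac{\kappa^2}{4\pi}\log(2\bar r)$, and the momentum term satisfies $-W\int_\Pi x_2\zeta^\varepsilon\to-\kappa W\bar r$. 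For the nonlinearity I would use the identity $J(x_2,s)=(1+\alpha x_2)J_F\big(\tfrac{s}{1+\alpha x_2}\big)$ together with Corollary~\ref{lem2.15} to pass $\frac1{\varepsilon^2}\int_\Pi J(x_2,\varepsilon^2\zeta^\varepsilon)\,dx$ to the $J_F$-integral in \eqref{2-18}. The crux is that these $\bar r$-dependent contributions are exactly the ones assembled in $-\mathcal{W}(\bar r)$.

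The second step exploits maximality. Fixing any $t\in(r^*/2,2r^*)$, I would build a competitor $\tilde\zeta^\varepsilon_t\in\mathcal{A}_{\varepsilon,\Lambda_0}$ by transplanting the limiting vortex profile to the height $(0,t)$, rescaled so that $\int\tilde\zeta^\varepsilon_t=\kappa$ and $\mathrm{supp}(\tilde\zeta^\varepsilon_t)\subset D$; repeating the computation of the previous step gives $\mathcal{E}_\varepsilon(\tilde\zeta^\varepsilon_t)=\frac{\kappa^2}{4\pi}\log\frac1\varepsilon-\mathcal{W}(t)+o(1)$. Since $\zeta^\varepsilon$ maximizes $\mathcal{E}_\varepsilon$, the inequality $\mathcal{E}_\varepsilon(\zeta^\varepsilon)\ge\mathcal{E}_\varepsilon(\tilde\zeta^\varepsilon_t)$ forces $\mathcal{W}(\bar r)\le\mathcal{W}(t)$ for every admissible $t$; equivalently, $\tfrac{d}{dt}\mathcal{W}(x_2^\varepsilon)\to 0$. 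As established above through the sign of $M'(t)$, the function $\mathcal{W}$ admits on $(0,\infty)$ the unique critical point $r^*$, which is its global minimizer; hence $\bar r=r^*$, and the reduction of the first paragraph completes the proof.

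The main obstacle is precisely the matching in the second step: one must verify that the $\bar r$-dependent pieces of the energy—namely the profile-rescaling factor $(1+\alpha\bar r)$ hidden in the self-interaction constant and in the blow-up of the $x_2$-weighted functional $J(x_2,\cdot)$—reorganize into the single expression $\int_{\R^2}\tfrac{1+\alpha r^*}{1+\alpha\bar r}J_F\big(\tfrac{1+\alpha\bar r}{1+\alpha r^*}f(U^\kappa)\big)\,dz$ of \eqref{2-18}. This is delicate because $J(x_2,\cdot)$ must be expanded about $x_2=\bar r$ rather than $x_2=r^*$, and the admissibility-preserving rescaling of the competitors $\tilde\zeta^\varepsilon_t$ is what generates the ratios $\tfrac{1+\alpha t}{1+\alpha r^*}$ inside $J_F$; keeping the two sides consistent, while checking the pointwise bound $0\le\tilde\zeta^\varepsilon_t\le\Lambda_0\varepsilon^{-2}$ and the inclusion $\mathrm{supp}(\tilde\zeta^\varepsilon_t)\subset D$, is where the bulk of the technical work lies.
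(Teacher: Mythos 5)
Your overall strategy (reduce to $\bar r=r^*$, compare energies, invoke uniqueness of the critical point of a reduced functional) is the right family of ideas, but the pivotal step of your argument---the two sharp expansions---is incorrect, and it is precisely the mistake that the paper's proof is engineered to avoid. The paper never expands $\mathcal{E}_\varepsilon(\zeta^\varepsilon)$ to $o(1)$ precision. It compares $\zeta^\varepsilon$ with its \emph{exact vertical translate} $\tilde\zeta^\varepsilon=\zeta^\varepsilon(\cdot-r^*\mathbf{e}_2+\bar r\,\mathbf{e}_2)$, so that the planar self-interaction $\frac1{4\pi}\int\!\!\int\zeta\log\frac1{|x-y|}\zeta$---the only term carrying $\frac{\kappa^2}{4\pi}\log\frac1\varepsilon$ \emph{and} the $O(1)$ profile self-energy---cancels identically, and only the image, momentum and $J$-terms require limits. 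Your claimed expansion $\mathcal{E}_\varepsilon(\zeta^\varepsilon)=\frac{\kappa^2}{4\pi}\log\frac1\varepsilon-\mathcal{W}(\bar r)+o(1)$ is false for the paper's $\mathcal{W}$ of \eqref{2-18}, for two reasons. First, as the paper itself computes inside this very proof,
\begin{equation*}
\frac1{\varepsilon^2}\int_D J(x_2,\varepsilon^2\zeta^\varepsilon)\,dx=\int_{\R^2}J_F\bigl(f(U^\kappa(z))\bigr)\,dz+o(1),
\end{equation*}
which is \emph{independent of} $\bar r$: by Corollary \ref{lem2.15} the maximizer's profile is adapted to its own height, so the weight $1+\alpha x_2\approx1+\alpha\bar r$ cancels against the factor $1+\alpha\bar r$ in the profile inside $J_F$. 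The $\bar r$-dependent ratio term $\frac{1+\alpha r^*}{1+\alpha\bar r}J_F\bigl(\frac{1+\alpha\bar r}{1+\alpha r^*}f(U^\kappa)\bigr)$ appears only for the \emph{translated} function---the $\bar r$-adapted profile sitting at height $\approx r^*$---i.e., it is the cost of the mismatch between profile and weight, which an expansion of the maximizer alone can never generate. Second, the $O(1)$ part of the planar self-interaction \emph{does} depend on $\bar r$: rescaling produces the term $\frac{\kappa^2}{8\pi}\log(1+\alpha\bar r)$, and no such term occurs in $\mathcal{W}$. So the obstacle you flag in your last paragraph is real, but the resolution you assert (that these pieces ``reorganize into'' $-\mathcal{W}(\bar r)$) is wrong.

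This cannot be repaired by more careful bookkeeping within your scheme. Executing your program consistently---competitors obtained by transplanting the height-adapted profile to each height $t$, plus the corrected expansion of the maximizer---yields the reduced functional $\widehat{\mathcal{W}}(t)=\frac{\kappa^2}{4\pi}\log\frac1{2t}+\kappa Wt-\frac{\kappa^2}{8\pi}\log(1+\alpha t)$, whose third term has derivative $-\frac{\alpha\kappa^2}{8\pi(1+\alpha t)}$, whereas the third term of the paper's $\mathcal{W}$ has derivative $+\frac{\alpha\kappa^2}{8\pi(1+\alpha r^*)}$ at $t=r^*$ (use $K(f(U^\kappa))=F(U^\kappa)$ and $\int_{\R^2} F(U^\kappa)\,dz=\frac{\kappa^2}{8\pi}$). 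For $\alpha>0$ these two functionals select different critical equations, so your final appeal to ``$r^*$ is the unique critical point of $\mathcal{W}$'' does not connect with what your construction actually produces; only when $\alpha=0$ do all discrepancies vanish. (You have in fact touched a delicate point of the paper itself: writing $\Phi(t)=\frac{\kappa^2}{4\pi}\log\frac1{2t}+\kappa Wt$ and $\Theta$ for the third term of $\mathcal{W}$, the limit of the paper's comparison is literally $\Phi(\bar r)+\Theta(r^*)\le\Phi(r^*)+\Theta(\bar r)$, not $\mathcal{W}(\bar r)\le\mathcal{W}(r^*)$; but this sign subtlety in the paper does not validate your expansion, which contradicts the paper's own computation of the $J$-term limit.) The robust route is the paper's: translate the maximizer itself, let the singular term and the profile self-energy cancel exactly, and read the inequality off the three remaining terms.
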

\begin{proof}
    Take $(0,x^\varepsilon_2)\in \text{supp}(\zeta^\varepsilon)$. Suppose (up to a subsequence) $x_2^\varepsilon\to \bar{r}>0$, as $\varepsilon \to 0$. We now show that $\bar{r}=r^{*}$. Set
    $\tilde{\zeta}^\varepsilon=\zeta^\varepsilon(\cdot-r^{*}\mathbf{e}_2+\bar{r}										\mathbf{e}_2)\in \mathcal{A}_{\varepsilon, \Lambda_0}$. Then
    \begin{equation*}
        \int_D \int_D{\tilde{\zeta}^\varepsilon(x)\log\frac{1}{|x-x'|}\tilde{\zeta}^\varepsilon}(x')dxdx'= \int_D \int_D{{\zeta}^\varepsilon(x)\log\frac{1}{|x-x'|}{\zeta}^\varepsilon}(x')dxdx'.
    \end{equation*}
   By Corollary \ref{lem2.15} , we have
    \begin{align*}
        \frac{1}{\varepsilon^2}\int_{D}J(x_2,\varepsilon^2 \zeta^{\varepsilon})dx    &=\frac{1}{\varepsilon^2}\int_{D}(1+\alpha x_2)J_{F}\Big(\frac{\varepsilon^2 \zeta^{\varepsilon}}{1+\alpha x_2}\Big)dx\\
        &=\int_{\Pi^{\varepsilon}}(1+\alpha x_2^{\varepsilon}+\varepsilon \alpha y_2)J_{F}\Big(\frac{\varepsilon^2 \zeta^{\varepsilon}(x^{\varepsilon}+\varepsilon y)}{1+\alpha x_2^{\varepsilon}+\varepsilon \alpha y_2}\Big)dy\\
        &=\int_{\R^2}J_{F}(f(U^{\kappa}(z)))dz+o(1).\\
    \end{align*}
Similarly, we can obtain
\begin{equation*}
     \frac{1}{\varepsilon^2}\int_{D}J(x_2,\varepsilon^2 \tilde{\zeta}^{\varepsilon})dx=\int_{\R^2}\frac{1+\alpha r^{*}}{1+\alpha \bar{r}} J_{F}\left(\frac{1+\alpha \bar{r}}{1+\alpha r^{*}}f(U^{\kappa}(z))\right)dz+o(1).\\
\end{equation*}
    Since $\mathcal{E}_\varepsilon(\tilde{\zeta}^\varepsilon)\le \mathcal{E}_\varepsilon(\zeta^\varepsilon)$, it follows that
    \begin{equation*}
        \begin{split}
            &\frac{1}{4\pi} \int_D \int_D{{\zeta}^\varepsilon(x)\log\frac{1}{|\bar{x}-x'|}{\zeta}^\varepsilon}(x')dxdx'+W\int_{D}x_2{\zeta}^\varepsilon dx +\frac{1}{\varepsilon^2}\int_{D}J(x_2,\varepsilon^2 \zeta^{\varepsilon})dx  \\
           &\ \ \ \  \le \frac{1}{4\pi} \int_D \int_D{\tilde{\zeta}^\varepsilon(x)\log\frac{1}{|\bar{x}-x'|}\tilde{\zeta}^\varepsilon}(x')dxdx'+W\int_{D}x_2\tilde{\zeta}^\varepsilon dx+\frac{1}{\varepsilon^2}\int_{D}J(x_2,\varepsilon^2 \tilde{\zeta^{\varepsilon}})dx.
        \end{split}
    \end{equation*}
    Letting $\varepsilon \to 0$, we get
    \begin{equation*}
        \mathcal{W}(\bar{r})\le \mathcal{W}(r^{*}).
    \end{equation*}
    This implies $\bar{r}=r^{*}$ and the proof is thus complete.
\end{proof}

Combining Lemma \ref{le2.8} and Lemma \ref{le2.9}, we get
\begin{lemma}\label{le2.10}
    For all sufficiently small $\varepsilon$, it holds $dist\left(supp(\zeta^\varepsilon), \partial D\right)>0$.
\end{lemma}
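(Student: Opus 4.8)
The plan is to combine the diameter estimate of Lemma~\ref{le2.8} with the localization result of Lemma~\ref{le2.9}, the point being that the limiting location $b_1$ sits strictly inside $D$. First I would record that $b_1=r^*\mathbf{e}_2=(0,r^*)$ is an interior point of the rectangle $D=\{-1<x_1<1,\ r^*/2<x_2<2r^*\}$: since $-1<0<1$ and $r^*/2<r^*<2r^*$, the distance $d_0:=dist(b_1,\partial D)=\min\{1,\,r^*/2\}$ is strictly positive and independent of $\varepsilon$.

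Next, Lemma~\ref{le2.9} gives $dist(supp(\zeta^\varepsilon),b_1)\to0$, so one may select a point $x_0^\varepsilon\in supp(\zeta^\varepsilon)$ with $|x_0^\varepsilon-b_1|\to0$. Feeding in the diameter bound $diam(supp(\zeta^\varepsilon))\le C\varepsilon$ furnished by Lemma~\ref{le2.8}, the triangle inequality then controls every point of the support at once:
\[
\sup_{x\in supp(\zeta^\varepsilon)}|x-b_1|\le |x_0^\varepsilon-b_1|+diam(supp(\zeta^\varepsilon))\le |x_0^\varepsilon-b_1|+C\varepsilon\longrightarrow0 .
\]
Hence the entire support is eventually contained in a ball $B_{\rho}(b_1)$ with radius $\rho=\rho(\varepsilon)\to0$.

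Finally, once $\varepsilon$ is small enough that $\rho<d_0/2$, every $x\in supp(\zeta^\varepsilon)$ satisfies $dist(x,\partial D)\ge d_0-\rho\ge d_0/2>0$, and therefore $dist(supp(\zeta^\varepsilon),\partial D)\ge d_0/2>0$, as claimed. I expect no genuine difficulty at this step: the two preceding lemmas already carry all of the analytic content, and the present statement merely packages them. Its purpose is to certify that the vortex core does not reach the artificial boundary $\partial D$, which is exactly what is needed to promote the Euler--Lagrange relation~\eqref{2-1}, valid a priori only inside $D$, to an equation holding on the full half-plane $\Pi$, and thus to produce a bona fide solution of the Boussinesq system rather than a merely constrained maximizer.
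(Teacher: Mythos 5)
Your proof is correct and is essentially the paper's own argument: the paper simply states that Lemma~\ref{le2.10} follows by ``combining Lemma~\ref{le2.8} and Lemma~\ref{le2.9}'', i.e.\ exactly the triangle-inequality packaging of the diameter bound $\mathrm{diam}(\mathrm{supp}(\zeta^\varepsilon))\le C\varepsilon$ with the localization $\mathrm{dist}(\mathrm{supp}(\zeta^\varepsilon),b_1)\to 0$ and the observation that $b_1=(0,r^*)$ lies at positive distance $\min\{1,r^*/2\}$ from $\partial D$. Your write-up just makes explicit the details the paper leaves to the reader, including the correct role of this lemma in upgrading the Euler--Lagrange identity from $D$ to all of $\Pi$.
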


\begin{lemma}\label{}
    For all sufficiently small $\varepsilon$, one has
   \begin{equation*}
    \zeta^{\varepsilon, \Lambda_0}=\frac{1}{ \varepsilon^2}i(x_2,\psi^{\varepsilon, \Lambda_0}),\ \ \ \ \ a.e.\  \text{in}\  \Pi.
\end{equation*}
\end{lemma}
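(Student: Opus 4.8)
The plan is to upgrade the pointwise identity of Lemma \ref{le12}, which holds a.e. in $D$, to one valid on all of $\Pi$. By the definition of $\mathcal{A}_{\varepsilon,\Lambda_0}$ the maximizer $\zeta^\varepsilon$ is supported in $D$, so $\zeta^\varepsilon \equiv 0$ on $\Pi\setminus D$, while assumption (H2) gives $i(x_2,t)=0$ whenever $t\le 0$. Hence it suffices to prove that $\psi^\varepsilon\le 0$ everywhere on $\Pi\setminus D$: once this is known, $\frac{1}{\varepsilon^2}i(x_2,\psi^\varepsilon)=0=\zeta^\varepsilon$ there, and combining with Lemma \ref{le12} the stated formula holds throughout $\Pi$.

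First I would record the qualitative picture of the support. On $D$, Lemma \ref{le12} gives $\zeta^\varepsilon=\frac{1}{\varepsilon^2}i(x_2,\psi^\varepsilon)$, so by the strict monotonicity in (H2) one has $\{\zeta^\varepsilon>0\}\cap D=\{\psi^\varepsilon>0\}\cap D$ up to a null set. Since $\mathcal{G}\zeta^\varepsilon\in C^1_{\mathrm{loc}}(\Pi)$, the function $\psi^\varepsilon$ defined in \eqref{2-2} is continuous, so $\psi^\varepsilon=0$ on $\partial(\mathrm{supp}\,\zeta^\varepsilon)$. By Lemma \ref{le2.10}, for $\varepsilon$ small the set $K:=\mathrm{supp}(\zeta^\varepsilon)$ is a compact subset of $D$ with $\mathrm{dist}(K,\partial D)>0$, so that $\partial K\subset D$ and, because $-\Delta\psi^\varepsilon=\zeta^\varepsilon=0$ off $K$, the function $\psi^\varepsilon$ is harmonic on the connected open set $\Pi\setminus K$.

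Next I would apply the maximum principle to the harmonic function $\psi^\varepsilon$ on $\Pi\setminus K$. Its boundary consists of $\partial K$, where $\psi^\varepsilon=0$; the axis $\partial\Pi=\{x_2=0\}$, where $\mathcal{G}\zeta^\varepsilon=0$ and hence $\psi^\varepsilon=-\mu^\varepsilon<0$ by \eqref{2-2} and the positivity of $\mu^\varepsilon$; and the part at infinity. Because $G(x,y)=\frac{1}{2\pi}\log\frac{|\bar{x}-y|}{|x-y|}\to 0$ as $|x|\to\infty$ for $y$ ranging over the compact set $K$, the potential $\mathcal{G}\zeta^\varepsilon$ is bounded on $\Pi$ and tends to $0$ at infinity; together with the coercive term $-Wx_2$ this forces $\psi^\varepsilon\to-\infty$ as $x_2\to+\infty$ and $\psi^\varepsilon\to -Wx_2-\mu^\varepsilon<0$ as $|x_1|\to\infty$ with $x_2$ bounded. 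In particular $\psi^\varepsilon$ is bounded above on $\Pi\setminus K$ and its boundary data are everywhere $\le 0$. Applying the maximum principle on the truncated domains $(\Pi\cap B_R)\setminus K$ and letting $R\to+\infty$—the far boundary contributing values $<0$ once $R$ is large, by the decay just described—yields $\psi^\varepsilon\le 0$ on $\Pi\setminus K\supseteq \Pi\setminus D$, which is exactly what is needed.

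The routine parts, namely the continuity of $\psi^\varepsilon$ and the identification of $\partial K$ as the zero level set of $\psi^\varepsilon$, are immediate from the $C^1_{\mathrm{loc}}$ regularity already recorded after Lemma \ref{le2.1}. I expect the only genuine subtlety to be the maximum-principle step on the unbounded half-plane: one must control $\psi^\varepsilon$ at infinity in order to rule out a positive interior value escaping to the far boundary. This is precisely what the explicit decay of the half-plane Green's function and the linear term $-Wx_2$ provide, guaranteeing that $\psi^\varepsilon$ has no positive supremum outside a fixed compact set, so that the truncation-and-limit argument closes and the proof is complete.
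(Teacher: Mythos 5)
Your proof is correct, but it takes a genuinely different route from the paper's. Both arguments reduce the lemma, via Lemma \ref{le12} and (H2), to showing $\psi^{\varepsilon}\le 0$ on $\Pi\setminus D$, and both rely on Lemma \ref{le2.10}; the difference is in how that sign condition is obtained. The paper's proof is purely quantitative and takes two lines: since $\operatorname{supp}(\zeta^\varepsilon)$ stays at positive distance from $\partial D$, the Green potential satisfies $\mathcal{G}\zeta^{\varepsilon}\le C$ on $\Pi\setminus D$ uniformly in $\varepsilon$, while Lemmas \ref{le2.3} and \ref{le2.5} give $\mu^{\varepsilon}\ge \frac{\kappa}{2\pi}\log\frac{1}{\varepsilon}-C\to+\infty$, so $\psi^{\varepsilon}=\mathcal{G}\zeta^{\varepsilon}-Wx_2-\mu^{\varepsilon}\le C-\mu^{\varepsilon}<0$ once $\varepsilon$ is small. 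You instead argue qualitatively: $\psi^\varepsilon$ is harmonic off $K=\operatorname{supp}(\zeta^\varepsilon)$, vanishes on $\partial K$ (which requires upgrading the a.e.\ identity of Lemma \ref{le12} to pointwise boundary information via continuity, and needs $\partial K\subset D$, i.e.\ Lemma \ref{le2.10}), equals $-\mu^{\varepsilon}\le 0$ on the axis, and has nonpositive limsup at infinity because $G(x,y)\to 0$ uniformly for $y\in K$; a truncation-and-limit maximum principle then closes the argument. What your approach buys is robustness: it needs only $\mu^{\varepsilon}\ge 0$ (Lemma \ref{le2.2}) rather than the logarithmic divergence of the multiplier, so it applies to every $\varepsilon$ for which the support is compactly contained in $D$. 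What the paper's approach buys is brevity: given the estimates already established, there is nothing left to verify, whereas your route must handle the Phragm\'en--Lindel\"of issue on the unbounded half-plane and the a.e.-to-pointwise passage on $\partial K$, both of which you do address correctly.
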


\begin{proof}
    By Lemma \ref{le12}, it suffices to show that $\psi^{\varepsilon}\leq 0$ a.e. on $\Pi\backslash D$.  By Lemma \ref{le2.10}, we have
    $$
    \mathcal{G} \zeta^{\varepsilon}(x) \leq C, \quad \forall x \in \Pi \backslash D
    $$
    when $\varepsilon$ is small enough.  In view of Lemma \ref{le2.5}, it follows that for $\varepsilon$  small, we have $\psi^{\varepsilon} \leq 0$ a.e. on $\Pi \backslash D$ and the proof is thus complete. 
\end{proof}

We  now turn to prove Theorem \ref{thm1}.
\begin{proof}[Proof of Theorem \ref{thm1}]
    It follows from the above lemmas by letting $\lambda=1/\varepsilon^{2}$.
\end{proof}

\section{Proofs of Theorems \ref{thm2} and \ref{thm3}}\label{sect3}
In order to complete the proof of Theorems \ref{thm2} and \ref{thm3}, we only need to re-determine the limit position where the vortex core will converge. Before proving Theorem \ref{thm2}, we need some preliminary lemmas.

We first study the limiting behavior of the corresponding stream functions $\psi^{\varepsilon}$. Define the scaling form of the stream function as follows
$$
\Psi_1^{\varepsilon}(y):=\psi^{\varepsilon}(x^{\varepsilon}+\varepsilon y),\ \ \ \ y \in \Pi^{\varepsilon}:=\left\{y \in \mathbb{R}^{2} \mid x^{\varepsilon}+\varepsilon y \in \Pi\right\}.
$$
By the properties of the stream function, we have
\begin{equation}
    -\Delta \Psi_1^{\varepsilon}=f(\Psi_1^{\varepsilon})+(x_2+\varepsilon y_2)\delta_{\varepsilon} g(\Psi_1^{\varepsilon})\ \ \text { in } \Pi^{\varepsilon},
\end{equation}
\begin{equation}
\int_{\Pi^{\varepsilon}}f(\Psi_1^{\varepsilon})+(x_2+\varepsilon y_2)\delta_{\varepsilon} g(\Psi_1^{\varepsilon}) d y=\kappa.
\end{equation}

Similar to Lemma \ref{lem2.13} for the limiting behavior of $\Psi_1^{\varepsilon}$, we have 
\begin{lemma}
      As $\varepsilon \rightarrow 0^{+}$, $\Psi_1^{\varepsilon}(x) \rightarrow U^{\kappa}\left( x\right)$ in $C_{\text {loc }}^{1, \gamma}\left(\mathbb{R}^{2}\right).$
\end{lemma}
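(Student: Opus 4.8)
The plan is to follow the proof of Lemma \ref{lem2.13} almost verbatim, the only structural difference being that the buoyancy-type term now carries the vanishing factor $\delta_\varepsilon$. Consequently this term drops out in the limit, no rescaling of the spatial variable is needed, and the limit profile is $U^\kappa$ itself rather than a dilate of it (which also explains why the limiting position becomes $r_1^*=\kappa/(4\pi W)$, the pure Euler value).

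First I would record the uniform bounds. As in the $\alpha f$ case, the rescaled vorticity
$$\varepsilon^2\zeta^\varepsilon(x^\varepsilon+\varepsilon y)=f(\Psi_1^\varepsilon)+(x_2^\varepsilon+\varepsilon y_2)\,\delta_\varepsilon\,g(\Psi_1^\varepsilon)$$
is bounded in $L^\infty(\Pi^\varepsilon)$ uniformly in $\varepsilon$: the function $\psi^\varepsilon$, hence $\Psi_1^\varepsilon$, has a uniform upper bound (argued exactly as in Lemma \ref{le11}), so $f(\Psi_1^\varepsilon)$ and $g(\Psi_1^\varepsilon)$ stay bounded on $\{\Psi_1^\varepsilon>0\}$, while $(x_2^\varepsilon+\varepsilon y_2)$ is bounded on compact sets and $\{\Psi_1^\varepsilon>0\}$ lies in a fixed ball $B_{R_0}(0)$. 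Calder\'on--Zygmund estimates then give a uniform $W^{2,p}_{\mathrm{loc}}(\Pi^\varepsilon)$ bound for every $1\le p<\infty$, and the Sobolev embedding yields precompactness of $\{\Psi_1^\varepsilon\}$ in $C^{1,\gamma}_{\mathrm{loc}}(\Pi^\varepsilon)$. Passing to a subsequence, $\Psi_1^\varepsilon\to\Psi$ in $C^{1,\gamma}_{\mathrm{loc}}(\mathbb{R}^2)$ and $\varepsilon^2\zeta^\varepsilon(x^\varepsilon+\varepsilon\,\cdot)\rightharpoonup\zeta$ weakly-star in $L^\infty(\mathbb{R}^2)$.

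The decisive step is the identification of the limiting equation. Since $\delta_\varepsilon\to0$ while $(x_2^\varepsilon+\varepsilon y_2)\,g(\Psi_1^\varepsilon)$ stays bounded on every compact set, the term $(x_2^\varepsilon+\varepsilon y_2)\,\delta_\varepsilon\,g(\Psi_1^\varepsilon)\to0$ locally uniformly; together with $f(\Psi_1^\varepsilon)\to f(\Psi)$ (from the $C^1_{\mathrm{loc}}$ convergence and continuity of $f$ away from $0$), passing to the limit gives
$$-\Delta\Psi=f(\Psi)\quad\text{in }\mathbb{R}^2.$$
Using the uniform support bound to exclude loss of mass at infinity, the constraint integrates in the limit to $\int_{\mathbb{R}^2}f(\Psi)\,dy=\kappa$, the $\delta_\varepsilon g$ contribution again vanishing. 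Unlike in Lemma \ref{lem2.13}, the coefficient in front of $f$ is simply $1$, so there is no dilation factor and the limiting problem is precisely the one defining $U^\kappa$.

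Finally, radial symmetry and uniqueness close the argument: as in Lemma \ref{lem2.13}, the Steiner symmetry of $\zeta^\varepsilon$ forces $\zeta$ to be radially non-increasing (cf. Lemmas \ref{lem2-9} and \ref{le2.9}), whence Green's representation makes $\Psi$ radial, and the uniqueness of the radial solution gives $\Psi\equiv U^\kappa$. I expect the only genuine obstacle to be a careful verification that the $\delta_\varepsilon g$ term is negligible in both the equation and the mass constraint, namely that the uniform bounds on $g(\Psi_1^\varepsilon)$, on $(x_2^\varepsilon+\varepsilon y_2)$, and on the size of the support all hold with constants independent of $\varepsilon$. This is precisely where the hypothesis (H3) with $\delta_0,\delta_1$ independent of $\lambda$ enters, ensuring that $\delta_\varepsilon\to0$ genuinely suppresses the buoyancy term uniformly.
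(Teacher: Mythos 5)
Your proposal is correct and is essentially the paper's own argument: the paper omits this proof, stating only that it is "similar" to that of Lemma \ref{lem2.13}, and your writeup is precisely that adaptation (uniform $L^\infty$ bound on the rescaled vorticity, elliptic estimates and $C^{1,\gamma}_{\mathrm{loc}}$ compactness, vanishing of the $\delta_\varepsilon g$ term so no dilation is needed, then radial symmetry and uniqueness to identify $U^\kappa$).
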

Since the proof process is similar, we omit it here.

From the above lemma, we have the following
\begin{corollary}\label{lem3-2}
     As $\varepsilon \rightarrow 0^{+}$, $\varepsilon^2\zeta^{\varepsilon}(x^{\varepsilon}+\varepsilon x) \rightarrow f(U^{\kappa}(x))$ in $L^{\infty}\left(\mathbb{R}^{2}\right)$.
\end{corollary}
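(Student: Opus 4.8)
The plan is to read this off as a direct consequence of the preceding lemma, which supplies the local convergence $\Psi_1^\varepsilon\to U^\kappa$ in $C^{1,\gamma}_{\mathrm{loc}}(\mathbb{R}^2)$. First I would record the explicit form of the maximizer in the present setting: the analogue of Lemma \ref{le12} for the pair $(f,\delta_\varepsilon g)$ gives $\zeta^\varepsilon=\varepsilon^{-2}\,i(x_2,\psi^\varepsilon)=\varepsilon^{-2}\big(f(\psi^\varepsilon)+x_2\delta_\varepsilon g(\psi^\varepsilon)\big)$, so that after the scaling $y=(x-x^\varepsilon)/\varepsilon$ one has
\begin{equation*}
    \varepsilon^2\zeta^\varepsilon(x^\varepsilon+\varepsilon x)=f(\Psi_1^\varepsilon(x))+(x_2^\varepsilon+\varepsilon x_2)\,\delta_\varepsilon\, g(\Psi_1^\varepsilon(x)).
\end{equation*}
The whole argument then reduces to passing to the limit in the two terms on the right-hand side.

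For the first term I would use that, by (H1) together with $f(0^+)=0$ and the vanishing of $f$ on $(-\infty,0]$, the function $f$ is continuous on all of $\mathbb{R}$; since $\Psi_1^\varepsilon\to U^\kappa$ uniformly on compact sets, continuity of $f$ yields $f(\Psi_1^\varepsilon)\to f(U^\kappa)$ uniformly on compacts. For the second term I would note that the $C^0_{\mathrm{loc}}$ convergence makes $\{\Psi_1^\varepsilon\}$ uniformly bounded on each compact set, so $g(\Psi_1^\varepsilon)$ is uniformly bounded there (as $g\in C(\mathbb{R})$ is nondecreasing), while $(x_2^\varepsilon+\varepsilon x_2)$ is bounded on compacts; because $\delta_\varepsilon\to0$, this term tends to $0$ uniformly on compacts. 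Together these give local uniform convergence to $f(U^\kappa)$.

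The remaining, and main, point is to upgrade this local uniform convergence to convergence in $L^\infty(\mathbb{R}^2)$, and the hard part is confining both supports to a single $\varepsilon$-independent ball. Here I would invoke Lemma \ref{le2.8}: since $\mathrm{diam}\,\mathrm{supp}(\zeta^\varepsilon)=O(\varepsilon)$ with constant independent of $\varepsilon$, in the scaled variable $\{\Psi_1^\varepsilon>0\}\subset B_{R_0}(0)$ with $R_0$ independent of $\varepsilon$; by (H2) both $f$ and $g$ vanish for nonpositive arguments, so $\varepsilon^2\zeta^\varepsilon(x^\varepsilon+\varepsilon\,\cdot)\equiv0$ outside $B_{R_0}$. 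On the limit side, $U^\kappa$ is superharmonic with $U^\kappa(x)\sim-\tfrac{\kappa}{2\pi}\log|x|\to-\infty$, so $\{U^\kappa>0\}$ is bounded and $f(U^\kappa)$ is compactly supported, say in $B_{R_1}(0)$. Taking $R=\max\{R_0,R_1\}$, both sides vanish on $\mathbb{R}^2\setminus B_R$ while the convergence is uniform on the compact $\overline{B_R}$, and combining the two regions yields convergence in $L^\infty(\mathbb{R}^2)$. I expect the only genuinely delicate ingredient to be the $\varepsilon$-uniformity of the support radius $R_0$, which is precisely what Lemma \ref{le2.8} provides.
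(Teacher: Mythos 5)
Your proof is correct and takes essentially the same route as the paper, which states the corollary as an immediate consequence of the preceding lemma (the $C^{1,\gamma}_{\mathrm{loc}}$ convergence $\Psi_1^{\varepsilon}\to U^{\kappa}$) combined with the explicit form $\zeta^{\varepsilon}=\varepsilon^{-2}\bigl(f(\psi^{\varepsilon})+x_2\delta_{\varepsilon}g(\psi^{\varepsilon})\bigr)$ and the $O(\varepsilon)$ support bound of Lemma \ref{le2.8}. You simply make explicit the details the paper leaves implicit: continuity of $f$ at $0$ (from $f(0^{+})=0$ and (H2)), the uniform vanishing of the $\delta_{\varepsilon}$-term, and the $\varepsilon$-independent confinement of both supports that upgrades locally uniform convergence to convergence in $L^{\infty}(\mathbb{R}^{2})$.
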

By Lemma \ref{le2.8}, we obtain that the vortex will converge to some point $(0,\bar{r})$ in $\bar{D}$. Now let's figure out where the limit point should be. Let 
\begin{equation}\label{3-3}
    \mathcal{W}_1(t)=\frac{\kappa^2}{4\pi}\log\frac{1}{2t}+{\kappa Wt}.
\end{equation}
To prove Theorem \ref{thm2}, instead of using \eqref{2.1} as the definition of $r^*$, we choose $r_1^*=\frac{\kappa}{4\pi W}$. It's easy to see that $\mathcal{W}_1(r_1^{*})=\min_{t>0} \mathcal{W}_1(t)$.

The following lemma shows that the limiting position is indeed $b_1=r_1^{*} \mathbf{e}_2$.
\begin{lemma}\label{le3.3}
    One has
    \begin{equation*}
        dist\left(supp(\zeta^\varepsilon), b_1\right)\to 0, \ \ \text{as}\ \varepsilon \to 0
    \end{equation*}
\end{lemma}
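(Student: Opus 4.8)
The plan is to mimic the structure of the proof of Lemma \ref{le2.9} exactly, replacing the energy functional $\mathcal{W}$ associated with the coupled system $(f,\alpha f)$ by the simpler limiting energy $\mathcal{W}_1$ defined in \eqref{3-3}. Concretely, I would take a point $(0,x_2^\varepsilon)\in \operatorname{supp}(\zeta^\varepsilon)$ and, passing to a subsequence, assume $x_2^\varepsilon\to\bar r>0$ as $\varepsilon\to 0$. The goal is to show $\bar r=r_1^*=\frac{\kappa}{4\pi W}$. As before I introduce the translated competitor $\tilde\zeta^\varepsilon=\zeta^\varepsilon(\cdot - r_1^*\mathbf{e}_2+\bar r\,\mathbf{e}_2)$, which lies in $\mathcal{A}_{\varepsilon,\Lambda_0}$ for $\varepsilon$ small since the support, by Lemma \ref{le2.8} and Lemma \ref{le2.10}, is a tiny ball at positive distance from $\partial D$, so the shifted support is still contained in $D$. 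The translation invariance of the kinetic part in the full plane gives the identity
\[
\int_D\int_D \tilde\zeta^\varepsilon(x)\log\frac{1}{|x-x'|}\tilde\zeta^\varepsilon(x')\,dx\,dx' = \int_D\int_D \zeta^\varepsilon(x)\log\frac{1}{|x-x'|}\zeta^\varepsilon(x')\,dx\,dx',
\]
so that the leading logarithmic self-energy is unchanged by the shift.

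Next I would compute the three remaining contributions to $\mathcal{E}_\varepsilon$ in the rescaled variables, using Corollary \ref{lem3-2} which gives $\varepsilon^2\zeta^\varepsilon(x^\varepsilon+\varepsilon x)\to f(U^\kappa(x))$. The new feature relative to Section \ref{sect2} is that the density coupling is through $\delta_\varepsilon g$ with $\delta_\varepsilon\to 0$; because of this, both the buoyancy term $W\int_D x_2\zeta^\varepsilon\,dx$ and the entropy term $\varepsilon^{-2}\int_D J(x_2,\varepsilon^2\zeta^\varepsilon)\,dx$ reduce, in the $\varepsilon\to 0$ limit, to quantities depending on the limiting position $\bar r$ only through the factor $W\kappa\bar r$ plus an $\bar r$-independent constant. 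This is the key simplification: since $\delta_\varepsilon g$ vanishes in the limit, the modified conjugate function $J(x_2,\cdot)$ collapses to $J_F$ and the $J_F$-integral contributes only an $O(1)$ constant independent of $\bar r$, so the position dependence of the limiting energy is captured exactly by $\mathcal{W}_1(\bar r)=\frac{\kappa^2}{4\pi}\log\frac{1}{2\bar r}+\kappa W\bar r$. Comparing $\mathcal{E}_\varepsilon(\tilde\zeta^\varepsilon)\le \mathcal{E}_\varepsilon(\zeta^\varepsilon)$ and letting $\varepsilon\to 0$ then yields $\mathcal{W}_1(\bar r)\le \mathcal{W}_1(r_1^*)=\min_{t>0}\mathcal{W}_1(t)$, forcing $\bar r=r_1^*$ since $r_1^*$ is the unique minimizer.

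The main obstacle I anticipate is making rigorous the claim that the $\delta_\varepsilon$-weighted entropy term contributes only an $\bar r$-independent constant to the limiting energy, and more delicately, that the self-energy expansion separates cleanly into the position-dependent $\mathcal{W}_1$ piece plus a vanishing or constant remainder. One must track the interaction between the two point-vortex cores (via the $\log|\bar x - x'|$ reflection kernel, cf. the display in Lemma \ref{le2.9}) together with the self-interaction, and verify that the cross term produces precisely the $\frac{\kappa^2}{4\pi}\log\frac{1}{2\bar r}$ contribution while the self-energy is absorbed into the $\varepsilon$-dependent constant common to both $\zeta^\varepsilon$ and $\tilde\zeta^\varepsilon$. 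Controlling the rate at which $\delta_\varepsilon\to 0$ against the rescaling, and confirming that the constants $\delta_0,\delta_1$ in (H3) remain uniform in $\lambda$ (as hypothesized in Theorem \ref{thm2}) so that all earlier estimates apply verbatim, is the technical heart of the argument; the rest follows the template of Lemma \ref{le2.9}.
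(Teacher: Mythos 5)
Your proposal follows the paper's own proof of this lemma essentially verbatim: the same translated competitor $\tilde\zeta^\varepsilon=\zeta^\varepsilon(\cdot-r_1^*\mathbf{e}_2+\bar r\,\mathbf{e}_2)$, the same cancellation of the $\log\frac{1}{|x-x'|}$ self-interaction by translation invariance, the same use of Corollary \ref{lem3-2} to show that the $J$-entropy term collapses to the position-independent constant $\int_{\R^2}J_F(f(U^\kappa))\,dy+o(1)$ because $\delta_\varepsilon\to 0$, and the same conclusion $\mathcal{W}_1(\bar r)\le \mathcal{W}_1(r_1^*)$ forcing $\bar r=r_1^*$ by uniqueness of the minimizer. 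The technical points you flag (the reflection kernel producing the $\frac{\kappa^2}{4\pi}\log\frac{1}{2\bar r}$ term, uniformity of $\delta_0,\delta_1$) are handled exactly as you anticipate, so your plan is correct and coincides with the paper's argument.
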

\begin{proof}
    Take $(0,x^\varepsilon_2)\in \text{supp}(\zeta^\varepsilon)$. Suppose (up to a subsequence) that $x_2^\varepsilon\to \bar{r}>0$, as $\varepsilon \to 0$. We now show that $\bar{r}=r_1^{*}$. Set
    $\tilde{\zeta}^\varepsilon=\zeta^\varepsilon(\cdot-r_1^{*}\mathbf{e}_2+\bar{r}\mathbf{e}_2)\in \mathcal{A}_{\varepsilon, \Lambda_0}$. Then
    \begin{equation*}
        \int_D \int_D{\tilde{\zeta}^\varepsilon(x)\log\frac{1}{|x-x'|}\tilde{\zeta}^\varepsilon}(x')dxdx'= \int_D \int_D{{\zeta}^\varepsilon(x)\log\frac{1}{|x-x'|}{\zeta}^\varepsilon}(x')dxdx'.
    \end{equation*}
    By Corollary \ref{lem3-2} , we have
    \begin{equation*}
        \begin{split}          
        \frac{1}{\varepsilon^2}\int_{D}J(x_2,\varepsilon^2 \zeta^{\varepsilon} )dx&=\int_{\Pi^{\varepsilon}}J(x_2^{\varepsilon}+\varepsilon y_2,\varepsilon^2 \zeta^{\varepsilon}(x^\varepsilon+\varepsilon y)) dy\\
        &=\int_{\Pi^{\varepsilon}}\sup_{t}\Big(\varepsilon^2 \zeta^{\varepsilon}(x^{\varepsilon}+\varepsilon y)t-\Big[\int_{0}^{t}f(s)+(x_2^{\varepsilon}+\varepsilon y_2)\delta_{\varepsilon} g(s)ds\Big]\Big)dy \\
        &=\int_{\R^2}\sup_{t}(f(U^{\kappa})t-F(t))dy+o(1)\\
        &=\int_{\R^2}J_{F}(f(U^{\kappa}))dy+o(1).\\
        \end{split}
    \end{equation*}
    Similarly, we can obtain
    \begin{equation*}
        \frac{1}{\varepsilon^2}\int_{D}J(x_2,\varepsilon^2 \tilde{\zeta}^{\varepsilon}(x) )dx=\int_{\R^2}J_{F}(f(U^{\kappa}))dy+o(1).
    \end{equation*}
    Since $\mathcal{E}_\varepsilon(\tilde{\zeta}^\varepsilon)\le \mathcal{E}_\varepsilon(\zeta^\varepsilon)$, it follows that
    \begin{equation*}
        \begin{split}
            &\frac{1}{4\pi} \int_D \int_D{{\zeta}^\varepsilon(x)\log\frac{1}{|\bar{x}-x'|}{\zeta}^\varepsilon}(x')dxdx'+W\int_{D}x_2{\zeta}^\varepsilon dx +\frac{1}{\varepsilon^2}\int_{D}J(x_2,\varepsilon^2 \zeta^{\varepsilon})dx  \\
            &\ \ \ \  \le \frac{1}{4\pi} \int_D \int_D{\tilde{\zeta}^\varepsilon(x)\log\frac{1}{|\bar{x}-x'|}\tilde{\zeta}^\varepsilon}(x')dxdx'+W\int_{D}x_2\tilde{\zeta}^\varepsilon dx+\frac{1}{\varepsilon^2}\int_{D}J(x_2,\varepsilon^2 \tilde{\zeta^{\varepsilon}})dx.
        \end{split}
    \end{equation*}
    Letting $\varepsilon \to 0$, we get
    \begin{equation*}
        \mathcal{W}_1(\bar{r})\le \mathcal{W}_1(r_1^{*}).
    \end{equation*}
    This implies $\bar{r}=r_1^{*}$ and the proof is thus complete.
\end{proof}

\begin{proof}[Proof of Theorem \ref{thm2}]
   Combining Lemma \ref{le2.1} to Lemma \ref{le2.8} in section \ref{sect2} and Lemma \ref{le3.3} in section \ref{sect3}, we prove that Theorem 1.4 is true by taking $\lambda=\frac{1}{\varepsilon^2}$.
\end{proof}

Finally we turn to proving Theorem \ref{thm3}. We modify the area slightly as follows
\[D=\{x\in \Pi~|~-1<x_1<1,\ c_1r_2^*< x_2< r_2^*/c_1\},  \]
where $c_1\in (0,1)$ will be given later by \eqref{C_choose}, and $r_2^*$ will be given by \eqref{3-6}.

 Similar to the previous process, we first study the limiting behavior of the corresponding stream function $\psi^{\varepsilon}$. We define the scaling form of the stream function below
$$
\Psi_2^{\varepsilon}(y):=\psi^{\varepsilon}(x^{\varepsilon}+\varepsilon y),\ \ \ \ y \in \Pi^{\varepsilon}:=\left\{y \in \mathbb{R}^{2} \mid x^{\varepsilon}+\varepsilon y \in \Pi\right\},
$$
where $\Psi_2$ satisfies
  \begin{equation}
    -\Delta \Psi_2^{\varepsilon}=\delta_{\varepsilon}f(\Psi_2^{\varepsilon})+(x_2^{\varepsilon}+\varepsilon y_2)g(\Psi_2^{\varepsilon})\ \ \text { in } \Pi^{\varepsilon},
\end{equation}
\begin{equation}
    \int_{\Pi^{\varepsilon}}\delta_{\varepsilon}f(\Psi_2^{\varepsilon})+(x_2^{\varepsilon}+\varepsilon y_2)g(\Psi_2^{\varepsilon}) d y=\kappa.
\end{equation}
Let's introduce the limiting profile $V^{\kappa}: \mathbb{R}^{2} \rightarrow \mathbb{R}$ defined as the unique radially symmetric solution of the problem
 \begin{equation}
    \left\{\begin{array}{l}
        -\Delta V^{\kappa}=g(V^{\kappa}), \quad x \in \mathbb{R}^{2} \\
        \int_{\mathbb{R}^{2}}g(V^{\kappa}) d x=\kappa.
    \end{array}\right.
\end{equation}
Thus, the asymptotic behavior of $\Psi_2^{\varepsilon}$ is described as follows
\begin{lemma}
     As $\varepsilon \rightarrow 0^{+}$, we have $\Psi_2^{\varepsilon}(x) \rightarrow V^{\kappa}\left( \sqrt{\bar{r}}x\right)$ in $C_{\text {loc }}^{1, \gamma}\left(\mathbb{R}^{2}\right).$
\end{lemma}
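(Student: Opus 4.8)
The plan is to run the argument of Lemma~\ref{lem2.13} almost verbatim; the only genuinely new point is that the subcritical term $\delta_\varepsilon f(\Psi_2^\varepsilon)$ must be shown to drop out in the limit while $(x_2^\varepsilon+\varepsilon y_2)g(\Psi_2^\varepsilon)$ survives. First I would collect the uniform bounds. Writing $\varepsilon^2\zeta^\varepsilon(x^\varepsilon+\varepsilon y)=\delta_\varepsilon f(\Psi_2^\varepsilon)+(x_2^\varepsilon+\varepsilon y_2)g(\Psi_2^\varepsilon)$, the left-hand side is bounded in $L^\infty(\Pi^\varepsilon)$ by the definition of $\mathcal{A}_{\varepsilon,\Lambda_0}$, and since both summands on the right are nonnegative, each is separately bounded in $L^\infty$. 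Interior elliptic estimates then give $\{\Psi_2^\varepsilon\}$ bounded in $W^{2,p}_{\mathrm{loc}}(\Pi^\varepsilon)$ for every $1\le p<\infty$, and the Sobolev embedding yields precompactness in $C^{1,\gamma}_{\mathrm{loc}}(\R^2)$. Passing to a subsequence, $\Psi_2^\varepsilon\to\Psi$ in $C^{1,\gamma}_{\mathrm{loc}}(\R^2)$ and $\varepsilon^2\zeta^\varepsilon(x^\varepsilon+\varepsilon\,\cdot\,)\rightharpoonup\zeta$ weakly-star in $L^\infty(\R^2)$. By the support estimate of Lemma~\ref{le2.8}, after rescaling the positivity set $\{\Psi_2^\varepsilon>0\}$ lies in a fixed ball $B_{R_0}(0)$, which lets me pass to the limit in the mass constraint by dominated convergence.

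Next I pass to the limit in the equation. On any compact set $\Psi_2^\varepsilon$ is uniformly bounded, hence $f(\Psi_2^\varepsilon)$ stays bounded and $\delta_\varepsilon f(\Psi_2^\varepsilon)\to0$ because $\delta_\varepsilon\to0$, whereas $x_2^\varepsilon+\varepsilon y_2\to\bar r$ and $g(\Psi_2^\varepsilon)\to g(\Psi)$. Thus the limit solves $-\Delta\Psi=\bar r\,g(\Psi)$ in $\R^2$ with $\int_{\R^2}\bar r\,g(\Psi)\,dx=\kappa$. Introducing $\tilde\Psi(x):=\Psi(x/\sqrt{\bar r})$ rescales away the constant $\bar r$, giving $-\Delta\tilde\Psi=g(\tilde\Psi)$ and $\int_{\R^2}g(\tilde\Psi)\,dx=\kappa$. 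Arguing as in Lemma~\ref{lem2.13}, namely using that $\zeta$ is radially nonincreasing, the expansion \eqref{2-12} of $\mu^\varepsilon$, and the Green representation, I would show $\Psi$, and hence $\tilde\Psi$, is radially symmetric about the origin. The uniqueness of the radial solution of the profile problem then forces $\tilde\Psi\equiv V^\kappa$, that is $\Psi(x)=V^\kappa(\sqrt{\bar r}\,x)$, which is exactly the assertion.

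The main obstacle is twofold. Because $f$ is only continuous away from $0$ under (H1), the convergences $\delta_\varepsilon f(\Psi_2^\varepsilon)\to0$ and $g(\Psi_2^\varepsilon)\to g(\Psi)$ have to be justified across the free boundary $\{\Psi=0\}$; as in Lemma~\ref{lem2.13} this set has zero measure and the $C^{1,\gamma}_{\mathrm{loc}}$ convergence is enough to conclude. The more delicate ingredient is the radial symmetry of the limit, which does not follow from the Steiner symmetry of the maximizer alone and instead leans on the rearrangement structure together with the sharp multiplier expansion \eqref{2-12}.
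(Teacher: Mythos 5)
Your proposal is correct and is essentially the proof the paper intends: the paper omits this argument precisely because it is the proof of Lemma~\ref{lem2.13} adapted to the new scaling, which is what you carry out, including the key new observation that $\delta_\varepsilon f(\Psi_2^\varepsilon)\to 0$ locally uniformly (by monotonicity of $f$ and the local bounds on $\Psi_2^\varepsilon$) while $(x_2^\varepsilon+\varepsilon y_2)g(\Psi_2^\varepsilon)\to \bar r\,g(\Psi)$, followed by the rescaling $\tilde\Psi(x)=\Psi(x/\sqrt{\bar r})$ and the uniqueness of $V^\kappa$. Your deferral of the radial-symmetry step to the rearrangement structure, the expansion \eqref{2-12} and the Green representation matches the paper's own (equally brief) treatment in Lemma~\ref{lem2.13}.
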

As a corollary to the above theorem, we have the following result
\begin{corollary}\label{lem3.5}
     As $\varepsilon \rightarrow 0^{+}$, one has $\varepsilon^2\zeta^{\varepsilon}(x^{\varepsilon}+\varepsilon x) \rightarrow \bar{r}g(V
     ^{\kappa}(\sqrt{\bar{r}}x))$ in $L^{\infty}\left(\mathbb{R}^{2}\right)$.
\end{corollary}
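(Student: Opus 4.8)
The plan is to read off the $L^\infty$ convergence directly from the $C^{1,\gamma}_{\mathrm{loc}}$ convergence $\Psi_2^\varepsilon\to V^\kappa(\sqrt{\bar r}\,\cdot)$ supplied by the preceding lemma, in complete parallel with the way Corollary \ref{lem2.15} was derived from Lemma \ref{lem2.13}. First, the Euler--Lagrange characterization of the maximizer (Lemma \ref{le2.2}, now with $i(x_2,t)=\delta_\varepsilon f(t)+x_2 g(t)$ and $\lambda=1/\varepsilon^2$) gives, a.e.\ in $D$,
\[
\varepsilon^2\zeta^\varepsilon(x)=i\bigl(x_2,\psi^\varepsilon(x)\bigr)=\delta_\varepsilon f\bigl(\psi^\varepsilon(x)\bigr)+x_2\,g\bigl(\psi^\varepsilon(x)\bigr).
\]
Substituting $x=x^\varepsilon+\varepsilon y$ and recalling $\Psi_2^\varepsilon(y)=\psi^\varepsilon(x^\varepsilon+\varepsilon y)$ yields
\[
\varepsilon^2\zeta^\varepsilon(x^\varepsilon+\varepsilon y)=\delta_\varepsilon f\bigl(\Psi_2^\varepsilon(y)\bigr)+\bigl(x_2^\varepsilon+\varepsilon y_2\bigr)\,g\bigl(\Psi_2^\varepsilon(y)\bigr),
\]
so the corollary amounts to passing to the limit in this identity.

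Next I would localize the problem. By the support estimate of Lemma \ref{le2.8} (which transfers verbatim to the present nonlinearity) and the conventions $f|_{(-\infty,0]}=g|_{(-\infty,0]}=0$ forced by (H2), the left-hand side is supported in a single ball $B_{R_0}(0)$ independent of $\varepsilon$; since $\operatorname{supp}(V^\kappa_+)$ is a ball, the candidate limit $\bar r\,g(V^\kappa(\sqrt{\bar r}\,\cdot))$ is likewise compactly supported. Enlarging $R_0$ if necessary so that both supports lie in $B_{R_0}(0)$, it suffices to prove uniform convergence on this fixed bounded set, as both sides vanish outside it.

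I would then pass to the limit term by term on $B_{R_0}(0)$. The $C^0$ content of the $C^{1,\gamma}_{\mathrm{loc}}$ convergence gives $\Psi_2^\varepsilon\to V^\kappa(\sqrt{\bar r}\,\cdot)$ uniformly on $B_{R_0}(0)$, so the values $\Psi_2^\varepsilon$ remain in a fixed bounded interval; hence $f(\Psi_2^\varepsilon)$ is uniformly bounded and $\delta_\varepsilon f(\Psi_2^\varepsilon)\to0$ uniformly because $\delta_\varepsilon\to0$. Note this step uses only boundedness of $f$, so the possible jump of $f$ at the origin permitted by (H1) is harmless. For the leading term, $g\in C^1(\mathbb{R})$ is continuous, whence $g(\Psi_2^\varepsilon)\to g(V^\kappa(\sqrt{\bar r}\,\cdot))$ uniformly on $B_{R_0}(0)$, while $x_2^\varepsilon+\varepsilon y_2\to\bar r$ uniformly there (using $x_2^\varepsilon\to\bar r$ along the selected subsequence and $|\varepsilon y_2|\le\varepsilon R_0\to0$). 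Multiplying these two uniformly convergent, uniformly bounded factors and adding the vanishing $\delta_\varepsilon f$ term gives $\varepsilon^2\zeta^\varepsilon(x^\varepsilon+\varepsilon\,\cdot)\to\bar r\,g(V^\kappa(\sqrt{\bar r}\,\cdot))$ in $L^\infty(\mathbb{R}^2)$.

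The only genuinely delicate point is the \emph{global} character of the $L^\infty$ norm: one must ensure that no mass escapes to infinity, i.e.\ that both sides are trapped inside one fixed ball. This is exactly what the uniform support bound (Lemma \ref{le2.8}) and the sign conventions on $f$ and $g$ secure; once the estimate is confined to $B_{R_0}(0)$, the remainder is routine uniform-convergence bookkeeping and the continuity of $g$ does the rest.
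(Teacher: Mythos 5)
Your proof is correct and follows essentially the route the paper intends: the paper states this corollary as an immediate consequence of the preceding lemma, and your argument simply makes explicit the underlying identity $\varepsilon^2\zeta^{\varepsilon}(x^{\varepsilon}+\varepsilon y)=\delta_{\varepsilon}f(\Psi_2^{\varepsilon}(y))+(x_2^{\varepsilon}+\varepsilon y_2)g(\Psi_2^{\varepsilon}(y))$ (the paper's rescaled equation), the uniform support bound, and the passage to the limit using $C^{1,\gamma}_{\mathrm{loc}}$ convergence, $\delta_{\varepsilon}\to 0$, and $x_2^{\varepsilon}\to\bar{r}$. The only cosmetic point is that the identity you attribute to Lemma \ref{le2.2} really requires the patch-part removal of Lemma \ref{le12}, which you in effect use.
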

According to Lemma  \ref{le2.8}, we know that the vortex  will converge to some point $(0,\bar{r})$  in $\bar{D}$, where $\bar{r}\in [c_1 r_2^*,r_2^*/c_1]$.
We want to find the position of the limit of vortex convergence, so we define the following function
\begin{equation}\label{3-7}
    \mathcal{W}_{2}(t)=\frac{\kappa^2}{4\pi}\log\frac{1}{2t}+{\kappa Wt}+  \frac{r_2^*}{t}\int_{\R^2}J_{G}\Big(\frac{t}{r_2^*}g(V^{\kappa}(z))\Big)dz.
\end{equation}
Here we define $r_2^*>0$ as follows
\begin{equation}\label{3-6}
    r_2^*=\frac{1}{\kappa W}\Big(\frac{\kappa^2}{4\pi}-\mathcal{C}_{g, \kappa}\Big)=\frac{\kappa}{8\pi  W},
\end{equation}
with
\begin{equation}\label{3-9}
    \begin{split}
         \mathcal{C}_{g, \kappa}&=\int_{\R^2}\Big(V^{\kappa}(z)g(V^{\kappa}(z))-J_{G}\Big(g(V^{\kappa}(z))\Big)\Big)dz\\
         &=\int_{\R^2}G(V^{\kappa}(z))dz=\frac{\kappa^2}{8\pi}.
    \end{split}
\end{equation}
 We want to get $\mathcal{W}_2(r_2^{*})=\min_{t\in [c_1 r_2^*,r_2^*/c_1]}\mathcal{W}_2(t)$. So by taking the derivative of $\mathcal{W}_2$, we have
 \begin{equation*}
     \begin{split}
          \frac{d \mathcal{W}_2(t)}{dt}&=-\frac{\kappa^2}{4\pi t}+\kappa W-\frac{r_2^*}{t^2}\int_{\R^2}J_{G}\Big(\frac{t}{r_2^*}g(V^{\kappa}(z))\Big)dz\\
         &\ \ \ \ \ +\frac{1}{t}\int_{\R^2}\partial_{s}J_{G}\Big(\frac{t}{r_2^*}g(V^{\kappa}(z))\Big)g(V^{\kappa}(z))dz.
     \end{split}
 \end{equation*}
For brevity, let's define $L(s):=\partial_{s}J_{G}(s)s-J_{G}(s)$, and it's also a non-decreasing function. By definition \eqref{3-6} of $r_2^*$, we can abbreviate it as follows
 \begin{equation*}
     \begin{split}
     \frac{d \mathcal{W}_2(t)}{dt}&=-\frac{\kappa^2}{4\pi t}+\kappa W+\frac{r_2^*}{t^2}\int_{\R^2}L\Big(\frac{t}{r_2^*}g(V^{\kappa}(z))\Big)dz\\
     &=-\frac{\kappa^2}{4\pi t}+\kappa W+\frac{r_2^*}{t^2}\int_{\R^2}G(\partial_{s}J_{G}(\frac{t}{r_2^*}g(V^{\kappa}(z))))dz\\
     &=\frac{1}{t^2}\Big(N(t)-N(r_2^*)\Big),
 \end{split}
 \end{equation*}
where  
\begin{equation*}
    N(t)=-\frac{\kappa^2}{4\pi}t+\kappa Wt^2+r_2^*\int_{\R^2}G(\partial_{s}J_{G}(\frac{t}{r_2^*}g(V^{\kappa}(z))))dz.
\end{equation*}
We need to determine whether $N$ is monotonic at $r_2^*$, and we can compute the derivative of $N$ as follows
\begin{equation*}
    \begin{split}
        N'(t)=-\frac{\kappa^2}{4\pi}+2\kappa Wt+\int_{\R^2}g(\partial_{s}J_{G}(\frac{t}{r_2^*}g(V^{\kappa})))\partial_{ss}J_{G}(\frac{t}{r_2^*}g(V^{\kappa}))g(V^{\kappa})dz.\\
    \end{split}
\end{equation*}
Thus we have
\begin{equation}\label{C_choose}
    N'(r_2^*)=\int_{\R^2}g(V^{\kappa})\partial_{ss}J_{G}(g(V^{\kappa}))g(V^{\kappa})dz>0.
\end{equation}
According to the above formula, we can find a constant $0<c_1<1$ , such that $N'(t)>0$ on the set $[c_1r_2^*,r_2^*/c_1]$.
 Using the sign of the derivative of $\mathcal{W}_2$, we obtain $\mathcal{W}_2(r_2^{*})=\min_{t\in [c_1 r_2^*,r_2^*/c_1]}\mathcal{W}_2(t)$  and $r_2^*$ is the only minimum point in this range.

Next, we want to obtain the specific position of vortex  convergence.
\begin{lemma}\label{le3.6}
    One has
    \begin{equation*}
        dist\left(supp(\zeta^\varepsilon), b_1\right)\to 0, \ \ \text{as}\ \varepsilon \to 0,
    \end{equation*}
where $b_1=r_2^*\mathbf{e}_2$.
\end{lemma}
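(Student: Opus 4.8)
The plan is to follow the scheme of Lemma~\ref{le2.9} and its counterpart Lemma~\ref{le3.3}: compare the maximizer $\zeta^\varepsilon$ with a rigid vertical translate of itself recentred at $r_2^*$, and read off the limiting position from the minimization of $\mathcal W_2$. By Lemma~\ref{le2.8} and the choice of $D$, any point $(0,x_2^\varepsilon)\in\operatorname{supp}(\zeta^\varepsilon)$ satisfies, along a subsequence, $x_2^\varepsilon\to\bar r$ with $\bar r\in[c_1r_2^*,r_2^*/c_1]$, and it suffices to prove $\bar r=r_2^*$. I set $\tilde\zeta^\varepsilon:=\zeta^\varepsilon(\cdot-r_2^*\mathbf e_2+\bar r\,\mathbf e_2)$; for $\varepsilon$ small this belongs to $\mathcal A_{\varepsilon,\Lambda_0}$ (its support stays inside $D$ by Lemma~\ref{le2.8}, since the center sits at the interior point $(0,r_2^*)$ and the diameter is $O(\varepsilon)$) and it is concentrated near $(0,r_2^*)$. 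Because the singular part $\tfrac1{4\pi}\iint\zeta\log\tfrac1{|x-x'|}\zeta$ of the self-interaction is translation invariant, it is common to $\zeta^\varepsilon$ and $\tilde\zeta^\varepsilon$ and drops out of the inequality $\mathcal E_\varepsilon(\tilde\zeta^\varepsilon)\le\mathcal E_\varepsilon(\zeta^\varepsilon)$, which thereby reduces to a comparison of the reflected (image) interaction, the linear term $W\int x_2\zeta$, and the functional $\tfrac1{\varepsilon^2}\int_D J(x_2,\varepsilon^2\zeta)$.

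Next I would pass to the limit $\varepsilon\to0^+$ in the three surviving terms. Using $\operatorname{supp}(\zeta^\varepsilon)\le C\varepsilon$ and $\int_D\zeta^\varepsilon=\kappa$, the reflected interaction contributes $\tfrac{\kappa^2}{4\pi}\log\tfrac1{2\bar r}$ for $\zeta^\varepsilon$ and $\tfrac{\kappa^2}{4\pi}\log\tfrac1{2r_2^*}$ for $\tilde\zeta^\varepsilon$, while the linear terms give $\kappa W\bar r$ and $\kappa Wr_2^*$. For the $J$-term I would rescale $x=x^\varepsilon+\varepsilon y$ and use Corollary~\ref{lem3.5}, $\varepsilon^2\zeta^\varepsilon(x^\varepsilon+\varepsilon\cdot)\to\bar r\,g(V^\kappa(\sqrt{\bar r}\,\cdot))$ in $L^\infty$, together with the fact that, since $\delta_\varepsilon\to0$, the conjugate $J(x_2,s)=\sup_t[st-\delta_\varepsilon F(t)-x_2G(t)]$ converges to the conjugate $x_2J_G(s/x_2)$ of $t\mapsto x_2G(t)$. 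A change of variables $z=\sqrt{\bar r}\,y$ then yields, for the translate, $\tfrac1{\varepsilon^2}\int_D J(x_2,\varepsilon^2\tilde\zeta^\varepsilon)\to\tfrac{r_2^*}{\bar r}\int_{\R^2}J_G\!\big(\tfrac{\bar r}{r_2^*}g(V^\kappa(z))\big)\,dz$, which is exactly the third term of $\mathcal W_2(\bar r)$, and the analogous computation for $\zeta^\varepsilon$ produces the corresponding term of $\mathcal W_2(r_2^*)$. Collecting these limits turns $\mathcal E_\varepsilon(\tilde\zeta^\varepsilon)\le\mathcal E_\varepsilon(\zeta^\varepsilon)$ into $\mathcal W_2(\bar r)\le\mathcal W_2(r_2^*)$.

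Finally I would invoke the monotonicity already established before the lemma: $N'(r_2^*)>0$ and $N'$ continuous give a constant $c_1\in(0,1)$ with $N'>0$ on $[c_1r_2^*,r_2^*/c_1]$, so $\mathcal W_2$ has $r_2^*$ as its unique minimizer over that interval. Since $\bar r$ lies in the interval, $\mathcal W_2(\bar r)\le\mathcal W_2(r_2^*)$ forces $\bar r=r_2^*$; hence every accumulation point of $\operatorname{supp}(\zeta^\varepsilon)$ equals $b_1=r_2^*\mathbf e_2$, which is the assertion.

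The step I expect to be the most delicate is the limit of the $J$-functional. Two things must be controlled: first, that the $\delta_\varepsilon f$ contribution to $i(x_2,\cdot)$ disappears so that $J(x_2,\cdot)$ genuinely converges to $x_2J_G(\cdot/x_2)$ --- this is where (H3) with $\delta_0,\delta_1$ independent of $\lambda$ is used, to keep the relevant integrals equi-integrable and to justify interchanging the supremum with the limit; and second, that $\bar r$ is genuinely trapped in $[c_1r_2^*,r_2^*/c_1]$, since (unlike Theorem~\ref{thm1}) $\mathcal W_2$ is only known to be minimized at $r_2^*$ on this restricted range. The narrowing of $D$ by $c_1$ is precisely what guarantees both that $\tilde\zeta^\varepsilon$ remains admissible and that the comparison localizes $\bar r$ where the minimization of $\mathcal W_2$ is valid.
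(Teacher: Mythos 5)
Your proposal reproduces the paper's own proof of this lemma essentially step for step: the same translate $\tilde\zeta^\varepsilon=\zeta^\varepsilon(\cdot-r_2^*\mathbf{e}_2+\bar r\,\mathbf{e}_2)$, the same cancellation of the translation-invariant singular part, the same three limits (image interaction, linear term, and the $J$-functional via Corollary \ref{lem3.5}), and the same appeal to the minimality of $\mathcal{W}_2$ on $[c_1r_2^*,r_2^*/c_1]$. In that sense it matches the paper's approach exactly.

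However, the step ``collecting these limits turns $\mathcal{E}_\varepsilon(\tilde\zeta^\varepsilon)\le\mathcal{E}_\varepsilon(\zeta^\varepsilon)$ into $\mathcal{W}_2(\bar r)\le\mathcal{W}_2(r_2^*)$'' contains a genuine gap, which your write-up shares with the paper, and your own computations make it visible. Write $b(t):=\frac{r_2^*}{t}\int_{\mathbb{R}^2}J_G\big(\frac{t}{r_2^*}g(V^\kappa(z))\big)\,dz$, so that $\mathcal{W}_2=\mathcal{W}_1+b$ and $b(r_2^*)=\int_{\mathbb{R}^2}J_G(g(V^\kappa(z)))\,dz$. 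As you correctly note, the $J$-limit of the maximizer $\zeta^\varepsilon$ (centered at $\bar r$) is $b(r_2^*)$, while the $J$-limit of the translate $\tilde\zeta^\varepsilon$ (centered at $r_2^*$) is $b(\bar r)$. But in the energy inequality every term coming from $\zeta^\varepsilon$ sits on one side and every term coming from $\tilde\zeta^\varepsilon$ on the other, so what the limit actually yields is
\begin{equation*}
\mathcal{W}_1(\bar r)+b(r_2^*)\ \le\ \mathcal{W}_1(r_2^*)+b(\bar r),
\qquad\text{i.e.}\qquad
\big(\mathcal{W}_1-b\big)(\bar r)\ \le\ \big(\mathcal{W}_1-b\big)(r_2^*),
\end{equation*}
with the $J$-contribution entering with a \emph{minus} sign. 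This is not $\mathcal{W}_2(\bar r)\le\mathcal{W}_2(r_2^*)$, which would require the two $J$-limits to appear on the opposite sides from where they actually occur.

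The discrepancy is not cosmetic. From the paper's own derivative computation, $\mathcal{W}_2'(r_2^*)=\mathcal{W}_1'(r_2^*)+b'(r_2^*)=0$, and since $\mathcal{W}_1'(r_2^*)=-\frac{\kappa^2}{4\pi r_2^*}+\kappa W=-\kappa W$ at $r_2^*=\frac{\kappa}{8\pi W}$, one gets $b'(r_2^*)=\kappa W>0$, hence
\begin{equation*}
\big(\mathcal{W}_1-b\big)'(r_2^*)=\mathcal{W}_1'(r_2^*)-b'(r_2^*)=-2\kappa W<0 .
\end{equation*}
So $\mathcal{W}_1-b$ is strictly decreasing at $r_2^*$, and the inequality actually obtained is satisfied strictly by heights $\bar r$ slightly above $r_2^*$; it cannot force $\bar r=r_2^*$. (The value $r_2^*=\kappa/(8\pi W)$ was chosen precisely to make the \emph{plus}-sign functional $\mathcal{W}_2$ critical, i.e., it is matched to the wrong sign.) Nor is this repairable by bookkeeping alone: comparing $\zeta^\varepsilon$ with translates to an arbitrary height $s$ and differentiating the resulting inequality at $s=\bar r$ gives the first-order condition $-\frac{\kappa^2}{4\pi\bar r}+\kappa W-\frac{\kappa^2}{8\pi\bar r}=0$ (using $\int_{\mathbb{R}^2}\big(V^\kappa g(V^\kappa)-J_G(g(V^\kappa))\big)dz=\kappa^2/8\pi$), i.e., $\bar r=\frac{3\kappa}{8\pi W}=3r_2^*$. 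So the argument, done consistently, points to a different limiting height than the one claimed; this affects the paper's proof as much as your reproduction of it, and it is the step you should not take on faith.
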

\begin{proof}
    Take $(0,x^\varepsilon_2)\in \text{supp}(\zeta^\varepsilon)$. Suppose (up to a subsequence) $x_2^\varepsilon\to \bar{r}\in [c_1r_2^*,r_2^*/c_1]$, as $\varepsilon \to 0$. We now show that $\bar{r}=r_2^{*}$. Set
    $\tilde{\zeta}^\varepsilon=\zeta^\varepsilon(\cdot-r_2^{*}\mathbf{e}_2+\bar{r}\mathbf{e}_2)\in \mathcal{A}_{\varepsilon, \Lambda_0}$. Then
    \begin{equation*}
        \int_D \int_D{\tilde{\zeta}^\varepsilon(x)\log\frac{1}{|x-x'|}\tilde{\zeta}^\varepsilon}(x')dxdx'= \int_D \int_D{{\zeta}^\varepsilon(x)\log\frac{1}{|x-x'|}{\zeta}^\varepsilon}(x')dxdx'.
    \end{equation*}
    By Corollary \ref{lem3.5} , we have
     \begin{equation*}
        \begin{split}          
                   \frac{1}{\varepsilon^2}\int_{D}J(x_2,\varepsilon^2 \zeta^{\varepsilon} )dx&=\int_{\Pi^{\varepsilon}}J(x_2^{\varepsilon}+\varepsilon y_2,\varepsilon^2 \zeta^{\varepsilon}(x^\varepsilon+\varepsilon y)) dy\\
            &=\int_{\Pi^{\varepsilon}}\sup_{t}(\varepsilon^2 \zeta^{\varepsilon}(x^\varepsilon+\varepsilon y)t-\Big[\int_{0}^{t}\delta_{\varepsilon}f(s)+(x_{2}^{\varepsilon}+\varepsilon y_2)g(s)ds\Big])dy\\
            &=\int_{\R^2}\sup_{t}(\bar{r}g(V
            ^{\kappa}(\sqrt{\bar{r}}y))t-\bar{r}G(t))dy+o(1)\\
            &=\int_{\R^2}J_{G}(g(V^{\kappa}(z)))dz+o(1).
        \end{split}
    \end{equation*}
    Similarly, we can obtain
    \begin{equation*}
        \frac{1}{\varepsilon^2}\int_{D}J(x_2,\varepsilon^2 \tilde{\zeta}^{\varepsilon}(x) )dx=\frac{r_2^*}{\bar{r}}\int_{\R^2}J_{G}\Big(\frac{\bar{r}}{r_2^*}g(V^{\kappa}(z))\Big)dz+o(1).
    \end{equation*}
    Since $\mathcal{E}_\varepsilon(\tilde{\zeta}^\varepsilon)\le \mathcal{E}_\varepsilon(\zeta^\varepsilon)$, it follows that
    \begin{equation*}
        \begin{split}
            &\frac{1}{4\pi} \int_D \int_D{{\zeta}^\varepsilon(x)\log\frac{1}{|\bar{x}-x'|}{\zeta}^\varepsilon}(x')dxdx'+W\int_{D}x_2{\zeta}^\varepsilon dx +\frac{1}{\varepsilon^2}\int_{D}J(x_2,\varepsilon^2 \zeta^{\varepsilon})dx  \\
            &\ \ \ \  \le \frac{1}{4\pi} \int_D \int_D{\tilde{\zeta}^\varepsilon(x)\log\frac{1}{|\bar{x}-x'|}\tilde{\zeta}^\varepsilon}(x')dxdx'+W\int_{D}x_2\tilde{\zeta}^\varepsilon dx+\frac{1}{\varepsilon^2}\int_{D}J(x_2,\varepsilon^2 \tilde{\zeta^{\varepsilon}})dx.
        \end{split}
    \end{equation*}
    Letting $\varepsilon \to 0$, we get
    \begin{equation*}
        \mathcal{W}_2(\bar{r})\le \mathcal{W}_2(r_2^{*}).
    \end{equation*}
Because $\bar{r}\in[c_1r_2^*,r_2^*/c_1]$, and $\mathcal{W}_2(r_2^{*})=\min_{t\in[c_1r_2^*,r_2^*/c_1] }\mathcal{W}_2(t)$, which implies
 $\bar{r}=r_2^{*}$ and the proof is thus complete.
\end{proof}

\begin{proof}[Proof of Theorem \ref{thm3}]
   Combining Lemma \ref{le2.1} to Lemma \ref{le2.8} in section \ref{sect2} and Lemma \ref{le3.6} in section \ref{sect3}, we can obtain Theorem \ref{thm3} by taking $\lambda=\frac{1}{\varepsilon^2}$. 
\end{proof}

{\bf Acknowledgments.}
{ This work was supported by NNSF of China Grant 11831009.
}

\end{document}